\documentclass[11pt,a4paper]{article}
\usepackage{amsmath, amsfonts, amssymb,amsthm}
\usepackage{a4wide}
\usepackage{parskip}
\usepackage{enumitem}
\usepackage{xcolor}
\usepackage{pstricks}
\usepackage{hyperref}
\usepackage{latexsym}
\usepackage{cite}
\def\QED{\hfill {$\square$}\goodbreak \medskip}

\usepackage[left=1in, right=1in,top=1in,bottom=1in]{geometry}
\allowdisplaybreaks
\newcommand{\Om} {\Omega}

\newcommand{\be} {\begin{equation}}
	\newcommand{\ee} {\end{equation}}
\newcommand{\bea} {\begin{eqnarray}}
	\newcommand{\eea} {\end{eqnarray}}
\newcommand{\Bea} {\begin{eqnarray*}}
	\newcommand{\Eea} {\end{eqnarray*}}

\newcommand{\al} {\alpha}

\newcommand{\de} {\delta}

\newcommand{\la} {\lambda}

\def\R{{\mathbb R}}
\def\N{{\mathcal N}}

\def\R{{\mathbb R}}

\def\N{{\mathbb N}}

\newcommand{\ra} {\rightarrow}
\newcommand{\na} {\nabla}

\numberwithin{equation}{section}

\newtheorem{definition}{Definition}[section]
\newtheorem{theorem}{Theorem}[section]
\newtheorem{rem}{Remark}[section]
\newtheorem{lemma}{Lemma}[section]
\newtheorem{prop}{Proposition}[section]

\numberwithin{equation}{section}

\begin{document}
	\setlength{\abovedisplayskip}{3pt}
	\setlength{\belowdisplayskip}{3pt}
	\date{}
	{\vspace{0.01in}
		\title{Normalized Solutions to the Kirchhoff-Choquard Equations with Combined Growth}
        

		\author{ {\bf Divya Goel$\,^{1,}$\footnote{e-mail: {\tt divya.mat@iitbhu.ac.in}},  Shilpa Gupta$\,^{2,}$\footnote{e-mail: {\tt shilpagupta890@gmail.com}}} \\ $^1\,$Department of Mathematical Sciences, Indian Institute of Technology (BHU),\\ Varanasi, 221005, India\\
        $^2\,$Department of Mathematics and Statistics, Indian Institute of Technology Kanpur,\\Kanpur, 208016, India}
		
		\maketitle
		\begin{abstract}
This paper is devoted to the study of the following nonlocal equation:
\begin{equation*}
-\left(a+b\|\nabla u\|_{2}^{2(\theta-1)}\right) \Delta u =\lambda u+\alpha (I_{\mu}\ast|u|^{q})|u|^{q-2}u+(I_{\mu}\ast|u|^{p})|u|^{p-2}u \ \hbox{in} \  \mathbb{R}^{N}, 
\end{equation*}
with the prescribed norm $ \int_{\mathbb{R}^{N}} |u|^{2}= c^2,$
where $N\geq 3$, $0<\mu<N$,  $a,b,c>0$, $1<\theta<\frac{2N-\mu}{N-2}$, $\frac{2N-\mu}{N}<q<p\leq \frac{2N-\mu}{N-2}$, $\alpha>0$  is a suitably small real parameter, $\lambda\in\R$ is the unknown parameter which appears as the Lagrange's multiplier and $I_{\mu}$ is the Riesz potential. We establish existence and multiplicity results and further demonstrate the existence of ground state solutions under the suitable range of $\alpha$. We demonstrate the existence of solution in the case of $q$ is $L^2-$supercritical and  $p= \frac{2N-\mu}{N-2}$, which is not investigated in the literature till now. In addition, we present certain asymptotic properties of the solutions. To establish the existence results, we rely on variational methods, with a particular focus on the mountain pass theorem, the min-max principle, and Ekeland's variational principle.
			
\noindent \textbf{Key words:} Kirchhoff  type problem; Choquard type problem; Variational methods; Normalized solutions; Combined growth;  Ground state solutions; Cardano's formula
			
\medskip
			
\noindent \textit{2020 Mathematics Subject Classification: 35A15, 35J20, 35J60.} 
\end{abstract}
\maketitle
\section{Introduction}
		
\setcounter{equation}{0}
		
This paper aims to establish the  existence and multiplicity results of  weak solutions for the following non-local problems:
\begin{equation}\label{1.1}
\left\{\begin{array}{ll}
-\left(a+b\|\nabla u\|_{2}^{2(\theta-1)}\right) \Delta u & =\lambda u+\alpha (I_{\mu}\ast|u|^{q})|u|^{q-2}u+(I_{\mu}\ast|u|^{p})|u|^{p-2}u \ \hbox{in} \  \R^{N},  \ \ \ \\
\hspace{2.5 cm} \displaystyle\int_{\R^{N}} | u|^{2}   &= c^2,
\end{array}\right.
\end{equation}
where $N\geq 3$, $0<\mu<N$,  $a,b,c>0$, $1<\theta<\frac{2N-\mu}{N-2}$, $\frac{2N-\mu}{N}<q<p\leq \frac{2N-\mu}{N-2}$, $\alpha>0$  is a suitably small real parameter, $\lambda\in\R$ is the unknown parameter which appears as the Lagrange's multiplier, and $I_{\mu}\ast |u|^{t}=\int_{\R^{N}}\frac{|u(y)|^{t}}{|x-y|^{\mu}}dy$ for any $t\in \{p,q\}$.
		
From the mathematical point of view, equation \eqref{1.1} is a  doubly nonlocal due to the presence of terms  	$\int_{\R^N} |\na u |^2 ~dx$ and $(I_{\mu}\ast|u|^{q})$.  This implies that \eqref{1.1} is not a
pointwise identity. This causes some mathematical difficulties, which make the study of   \eqref{1.1} particularly interesting.  The first way to study \eqref{1.1} is to consider the case where $\la$ is a fixed and assigned parameter or in the presence of additional potential $V(x)$, the existence, multiplicity, and behaviour of the solution have been studied extensively in the past decade; see, for example, \cite{chen2022nodal,rui2020kirchhoff,song2019infinitely}, and references therein.  In this case, the solution to the problem is the critical point of the corresponding functional; however, nothing can be given apriori on the $L^2$-norm of the solutions. The second way is having prescribed $L^2$-norm. 	In contemporary physics,  this is called normalized solutions, and there is a wide interest among physicists in exploring normalized solutions.  This interest arises because of the apparent physical significance; for instance, the mass $\|u\|_{L^{2}(\R^{N})}$ can represent the total number of atoms in Bose-Einstein condensation or the power supply in nonlinear optics framework. 
		
As far as we know, in 1997,  Jeanjean \cite{jeanjean1997existence} started the study of normalized solutions for the semi-linear elliptic equation on the following constraint set 
$$S_{c}=\{u\in H^{1}(\R^{N}):\int_{\Omega} | u|^{2}dx=c^2\}.$$
Consider the following equation for 
\begin{equation}\label{i1}
-\Delta u  =\lambda u+|u|^{p-2}u \ \hbox{in} \  \R^{N},  \quad u \in H^1(\R^N), ~~
\int_{\R^{N}} | u|^{2}dx   = c^2,
\end{equation}
where  $\la \in \R$ is a part of the unknown that appears as a Lagrange multiplier. The associated energy functional is defined on the constrained set $S_{c}$ as 
		$$I(u)=\int_{\R^{N}}|\nabla u|^{2}dx-\frac{1}{p}\int_{\R^{N}} |u|^{p}dx.$$ 
		Each  critical point  $u_c \in S_c $ of the functional $I|_{S_c}$  there exists $\la _c \in \R$, 
		corresponds to a Lagrange multiplier $\la_c \in \R$  such that $(u_c, \la_c)$ solves problem \eqref{i1}.  
The exponent $\overline{p}=2+\frac{4}{N}$ is known as $L^{2}-$critical exponent and serves as a critical threshold for various dynamical characteristics, including distinctions between global existence and blow-up, as well as the stability or instability of ground states. The  problems of type \eqref{i1}  where $p<\overline{p}$ is called the  $L^{2}-$ subcritical problems which are studied by \cite{jeanjean1997existence,cazenave1982orbital,shibata2017new}. The energy functional $I$ is bounded below  $p<\overline{p}$, and one can find the global minimizer of the functional $I$.  The problems of type \eqref{i1} where $p>2+\frac{4}{N}$ are called the  $L^{2}-$supercritical problems, and the corresponding energy functional is unbounded below on $S_{c}$. Hence, one can not find the global minimizer of the functional $I$. In the case of variational methods, one of the main difficulties in dealing with this case is that the boundedness of the Palais Smale sequence can not be obtained on $S_{c}$. In \cite{jeanjean1997existence}, Jeanjean addressed this issue by taking into account the Pohozaev set and proving the boundedness of the Palais Smale sequence, which was the first result in this direction.


			
			
			

The case of the concave-convex combined non-linearities is more complex, as the interaction of the $L^{2}-$subcritical, $L^{2}-$critical and $L^{2}-$supercritical exponents strongly affects the geometry of energy functional. Recently,  Soave \cite{soave2020normalized1} studied  the following problem:
\begin{equation}\label{i2}
\left\{\begin{array}{ll}
-\Delta u & =\lambda u+|u|^{p-2}u+\alpha|u|^{q-2}u\ \hbox{in} \  \R^{N},  \ \ \ \\
\int_{\R^{N}} | u|^{2}dx   &= c^2,
\end{array}\right.
\end{equation}
Where $q$ is $L^{2}-$subcritical/$L^{2}-$critical exponent and $p$ is Soboelv subcritical, i.e.,   $2<q\leq 2+\frac{4}{N}\leq p<2^{*}$. Here, the authors registered the existence of ground state solution when  $2<q\leq 2+\frac{4}{N}$ and $2+\frac{4}{N} <p< 2^*$. Subsequently in \cite{soave2020normalized2}, Soave discussed the problem \eqref{i2} for  $2<q<2^*=p$ and proved the existence of ground state solution with negative energy if  $ q \in (2, 2+\frac{4}{N})$. While for $q \in [2+\frac{4}{N},2^*)$ Soave established the existence of mountain type solution with positive energy. He further discussed the existence and non-existence of solution for $\lambda<0$. Later on,  Jeanjean-Jendrej-Le-Visciglia  \cite{jeanjean2022orbital} and Jeanjean-Le \cite{jeanjean2022multiple}, studied Problem \eqref{i2} for more general cases and solved the open problem mentioned by Soave \cite{soave2020normalized2}. 
			
Due to the presence of the Choquard type non-linearity, Problem \eqref{1.1} is known as a Choquard equation.  One of the main tools to deal with such type of equations is Hardy-Littlewood-Sobolev (HLS) \cite{lieb2001analysis} inequality which is stated below.

\begin{prop}\cite{lieb2001analysis}
Let $t_{1},t_{2}>1$ and $0<\lambda<N$ with $1/t_{1}+1/t_{2}+\lambda/N=2$, $f\in L^{t_{1}}(\R^{N})$ and $g\in L^{t_{2}}(\R^{N})$. Then there exists a sharp constant $C$ independent of $f$ and $g$ such that 
$$\left| \int_{\R^{N}}\int_{\R^{N}}\dfrac{ f(x)g(y)}{|x-y|^\lambda}dx \ dy\right|\leq C \|f\|_{L^{t_{1}}(\R^{N})}\|g\|_{L^{t_{2}}(\R^{N})}.$$
\end{prop}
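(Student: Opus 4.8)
The plan is to derive the inequality from the mapping properties of the Riesz potential
\[
I_\lambda f(x):=\int_{\R^N}\frac{f(y)}{|x-y|^\lambda}\,dy,
\]
which is the convolution of $f$ with the kernel $|x|^{-\lambda}$. First I would pass from the bilinear estimate to a linear one by duality: writing $t_2'=t_2/(t_2-1)$, Hölder's inequality gives
\[
\left|\int_{\R^N}\int_{\R^N}\frac{f(x)g(y)}{|x-y|^\lambda}\,dx\,dy\right|
=\left|\int_{\R^N}(I_\lambda f)(y)\,g(y)\,dy\right|
\le \|I_\lambda f\|_{L^{t_2'}(\R^N)}\,\|g\|_{L^{t_2}(\R^N)},
\]
so it suffices to prove $\|I_\lambda f\|_{L^{t_2'}(\R^N)}\le C\|f\|_{L^{t_1}(\R^N)}$. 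The balance condition $\tfrac1{t_1}+\tfrac1{t_2}+\tfrac\lambda N=2$ is precisely $\tfrac1{t_2'}=\tfrac1{t_1}+\tfrac\lambda N-1$, and since $t_2>1$ we have $\tfrac1{t_1}+\tfrac\lambda N=2-\tfrac1{t_2}>1$; this guarantees $1<t_2'<\infty$ and, as a short computation shows, $\lambda t_1'>N$ as well.

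The key step is a Hedberg-type pointwise bound. For $R>0$ I would split $I_\lambda f(x)=\int_{|x-y|\le R}+\int_{|x-y|>R}$. Estimating the inner integral over the dyadic annuli $\{2^{-k-1}R<|x-y|\le 2^{-k}R\}$ and bounding each local average of $|f|$ by the Hardy--Littlewood maximal function $Mf(x)$ gives $\int_{|x-y|\le R}|x-y|^{-\lambda}|f(y)|\,dy\le C R^{N-\lambda}Mf(x)$ (the geometric series converges because $N>\lambda$). For the outer integral, Hölder's inequality together with $\lambda t_1'>N$ gives $\int_{|x-y|>R}|x-y|^{-\lambda}|f(y)|\,dy\le C R^{-N/t_2'}\|f\|_{L^{t_1}(\R^N)}$. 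Optimising the sum of the two bounds over $R>0$ yields
\[
|I_\lambda f(x)|\le C\,\big(Mf(x)\big)^{\sigma}\,\|f\|_{L^{t_1}(\R^N)}^{\,1-\sigma},\qquad \sigma:=\frac{t_1}{t_2'}\in(0,1).
\]

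Raising this to the power $t_2'$ (so the exponent of $Mf$ becomes $\sigma t_2'=t_1$), integrating over $\R^N$, and invoking the maximal inequality $\|Mf\|_{L^{t_1}(\R^N)}\le C\|f\|_{L^{t_1}(\R^N)}$ — valid because $t_1>1$ — one obtains $\|I_\lambda f\|_{L^{t_2'}(\R^N)}\le C\|f\|_{L^{t_1}(\R^N)}^{\sigma}\|f\|_{L^{t_1}(\R^N)}^{1-\sigma}=C\|f\|_{L^{t_1}(\R^N)}$, which is the asserted inequality with some finite (not necessarily optimal) constant. Alternatively, the same linear bound follows in one stroke from the weak Young convolution inequality applied to $|x|^{-\lambda}\in L^{N/\lambda,\infty}(\R^N)$, combined again with Hölder's inequality in the duality step above.

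For the statement that $C$ may be taken sharp I would follow Lieb: by the Riesz rearrangement inequality the bilinear form, at fixed $\|f\|_{L^{t_1}(\R^N)}$ and $\|g\|_{L^{t_2}(\R^N)}$, does not decrease when $f,g$ are replaced by their symmetric decreasing rearrangements, which reduces the extremal problem to radial functions; the optimal constant is then attained, and in the conformally invariant diagonal case $t_1=t_2$ the extremals are identified (as dilates and translates of a fixed power of $(1+|x|^2)^{-1}$) by a competing–symmetries argument exploiting the conformal invariance of the functional. I expect the exponent bookkeeping and the optimisation in $R$ to be entirely routine; the only substantial ingredient in the non-sharp argument is the $L^{t_1}$-boundedness of the maximal operator (itself a consequence of its weak $(1,1)$ bound and Marcinkiewicz interpolation). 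The real obstacle, were one to insist on the \emph{value} of the sharp constant, is the rearrangement and extremal analysis — but since only the existence of a finite $C$ is used in the sequel, this refinement can simply be quoted from \cite{lieb2001analysis}.
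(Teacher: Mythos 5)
The paper offers no proof of this proposition at all --- it is quoted verbatim from Lieb--Loss --- so there is nothing internal to compare your argument against; what matters is whether your proposed proof is sound, and it is. The route you take is the standard non-sharp one: Fubini plus H\"older reduce the bilinear form to the $L^{t_1}\to L^{t_2'}$ bound for the Riesz potential, and your exponent bookkeeping is correct throughout ($1/t_2'=1/t_1+\lambda/N-1$ together with $t_1,t_2>1$ and $0<\lambda<N$ gives $1<t_2'<\infty$ and $\lambda t_1'>N$; the optimization in $R$ produces $\sigma=\tfrac{N/t_2'}{N-\lambda+N/t_2'}=t_1/t_2'\in(0,1)$ exactly as you claim, since $N-\lambda+N/t_2'=N/t_1$). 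The Hedberg splitting, the geometric series over dyadic annuli (convergent because $N>\lambda$), and the final appeal to the Hardy--Littlewood maximal inequality (which requires $t_1>1$, which you have) all close correctly, and the alternative via weak Young with $|x|^{-\lambda}\in L^{N/\lambda,\infty}$ is an equally valid shortcut. The one point worth flagging is the word ``sharp'': your argument yields \emph{some} finite constant, after which the existence of a best constant is immediate by taking the infimum over admissible constants; identifying its value and its extremals genuinely requires the Riesz rearrangement and competing-symmetries machinery, which you rightly defer to \cite{lieb2001analysis}. Since the paper only ever uses the inequality with an unspecified constant, your proof is fully adequate for the role the proposition plays here.
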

			
The equations of the Choquard type have undergone extensive examination in the existing literature. For a comprehensive exploration of their physical interpretation and a survey of such equations, we direct readers to \cite{moroz2017guide}. Further, for specific results on the existence of solutions concerning Choquard-type equations, one can find relevant contributions in the works of Moroz and Schaftingen \cite{moroz2013groundstates,moroz2015existence}.

   
   Pertaining to the mass-preserved Choquard problem,  the following version of Gagliardo-Nirenberg inequality is used to deal with Choquard-type problems. 
			
\begin{prop}\cite[Proposition 1.2]{zhang2023existence}
Let $N\geq 1$, $\mu\in(0,N)$ and $r\in \left( \frac{2N-\mu}{N},\frac{2N-\mu}{N-2}\right) $ then there exist $C_r>0$(dependent on $r$) such that
\begin{equation}\label{GN1}
\int_{\R^{N}} (I_{\mu}\ast|u|^{r})|u|^{r}dx\leq C_{r}\|\nabla u\|_{2}^{2r\delta_{r}}\|u\|_{2}^{2r(1-\delta_{r})},
\end{equation}
where
$$\delta_{r}=\frac{N(r-2)+\mu}{2r}.$$
\end{prop}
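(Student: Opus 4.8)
The plan is to derive \eqref{GN1} by composing the Hardy--Littlewood--Sobolev inequality stated above with the classical Gagliardo--Nirenberg interpolation inequality on $H^1(\R^N)$, after which the exponent $\delta_r$ falls out of a short arithmetic check. There is no deep idea here; the point is to line up the Lebesgue exponents correctly and to verify that the range of $r$ makes everything admissible.

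First I would apply the HLS inequality with $f=g=|u|^{r}$ and $\lambda=\mu$. Taking the two Lebesgue exponents equal, say $t_1=t_2=t$, the constraint $\tfrac1{t_1}+\tfrac1{t_2}+\tfrac{\mu}{N}=2$ forces $t=\frac{2N}{2N-\mu}$, and since $\big\||u|^{r}\big\|_{t}=\|u\|_{rt}^{r}$, HLS gives
\[
\int_{\R^{N}} (I_{\mu}\ast|u|^{r})|u|^{r}\,dx \;\le\; C\,\big\||u|^{r}\big\|_{t}^{2}\;=\;C\,\|u\|_{s}^{2r},\qquad s:=rt=\frac{2Nr}{2N-\mu}.
\]
After clearing denominators, the hypothesis $\frac{2N-\mu}{N}<r<\frac{2N-\mu}{N-2}$ is precisely equivalent to $2<s<2^{*}$, where $2^{*}=\frac{2N}{N-2}$ (with $2^{*}=\infty$ for $N\le2$). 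In particular $u\in H^1(\R^N)\hookrightarrow L^{s}(\R^N)$, so the right-hand side is finite, and the Gagliardo--Nirenberg inequality applies: there is $C=C(N,s)>0$ such that $\|u\|_{s}\le C\,\|\nabla u\|_{2}^{\gamma}\|u\|_{2}^{1-\gamma}$ with $\gamma=N\big(\tfrac12-\tfrac1s\big)$, and the constraint $s\in(2,2^{*})$ is exactly what guarantees $\gamma\in(0,1)$, i.e.\ that this interpolation exponent is admissible.

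It remains to identify $\gamma$ with $\delta_r$ and combine. Substituting $s=\frac{2Nr}{2N-\mu}$ gives
\[
\gamma=\frac N2-\frac Ns=\frac N2-\frac{2N-\mu}{2r}=\frac{N(r-2)+\mu}{2r}=\delta_r,
\]
so raising the Gagliardo--Nirenberg bound to the power $2r$ and inserting it into the HLS estimate yields exactly
\[
\int_{\R^{N}} (I_{\mu}\ast|u|^{r})|u|^{r}\,dx \le C\,\|u\|_{s}^{2r}\le C_r\,\|\nabla u\|_{2}^{2r\delta_r}\,\|u\|_{2}^{2r(1-\delta_r)},
\]
with $C_r$ absorbing the HLS constant and the $2r$-th power of the Gagliardo--Nirenberg constant.

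The argument is essentially bookkeeping, and the only place where the stated range of $r$ is genuinely used — hence the only potential obstacle — is the verification that $s=rt$ lands strictly in $(2,2^{*})$: this simultaneously legitimizes the use of HLS (through $u\in L^{s}$) and, more to the point, ensures $\gamma=\delta_r\in(0,1)$ so that Gagliardo--Nirenberg may be invoked. If one prefers not to use Gagliardo--Nirenberg as a black box, the same interpolation inequality follows by writing $L^{s}$ as an interpolation space between $L^{2}$ and $L^{2^{*}}$ via H\"older's inequality and then applying the Sobolev embedding $\dot H^{1}(\R^N)\hookrightarrow L^{2^{*}}(\R^N)$; this adds nothing of substance, and the limiting case $r=\frac{2N-\mu}{N-2}$, where $s=2^{*}$, $\gamma=\delta_r=1$, and the estimate reduces to the pure Sobolev bound, is handled the same way.
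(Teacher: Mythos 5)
Your argument is correct and complete: the exponent bookkeeping (HLS with $t_1=t_2=\frac{2N}{2N-\mu}$, hence $s=\frac{2Nr}{2N-\mu}\in(2,2^*)$, then Gagliardo--Nirenberg with $\gamma=N(\tfrac12-\tfrac1s)=\delta_r$) all checks out. The paper does not prove this proposition but simply cites it, and your derivation is exactly the standard proof of the cited result, so there is nothing to compare or correct.
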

This proposition shows that the exponent $2+ \frac{2-\mu}{N}$ is the $L^2-$ critical exponent for an equation involving Choquard type of non-linearities.   For this critical exponent, Li-Ye \cite{li2014existence}  extended the results of \cite{jeanjean1997existence}  and proved a normalized solution for the following equation:
\begin{equation*}
-\Delta u+\lambda u =(I_{\mu}\ast F(u))f(u) \ \hbox{in} \  \R^{N},
\end{equation*}
where   $\lambda\in \R$ and $F(t)=\int_{\R^{N}}f(s)ds$. Later,  Yao-Chen-R$\breve{\text{a}}$dulescu-Sun \cite{yao2022normalized}, discussed several existence and non-existence results for the following:
\begin{equation}\label{in1}
\left\{\begin{array}{ll}
\hspace{0.2 cm} -\Delta u+\lambda u& =\alpha (I_{\mu}\ast|u|^{p})|u|^{p-2}u+\beta|u|^{q-2}u\ \hbox{in} \  \R^{N},  \ \ \ \\
\int_{\R^{N}} | u|^{2}dx   &= c^2,
\end{array}\right.
\end{equation}
where $p= \frac{2N-\mu}{N}$ is lower Choquard critical exponent and $2<q\leq 2^{*}$.  Li \cite{li2022standing} also proved various existence results for \eqref{in1} when $p$ is upper Choquard critical exponent and $2<q<\frac{2N+4}{N}$. Subsequently,  Shang-Ma \cite{shang2023normalized} studied the following:
\begin{equation*}
\left\{\begin{array}{ll}
\hspace{1 cm} -\Delta u& =\lambda u+\alpha (I_{\mu}\ast|u|^{p})|u|^{p-2}u+(I_{\mu}\ast|u|^{q})|u|^{q-2}u \ \hbox{in} \  \R^{N},  \ \ \ \\
\int_{\R^{N}} | u|^{2}dx   &= c^2,
\end{array}\right.
\end{equation*}
where $\frac{2N-\mu}{N}< p<\frac{2N-\mu}{N-2}$ and $q$ is the upper Choquard critical exponent. 
Many researchers studied the Choquard-type problems with mass preserved property; we refer to the works of Lei-Yang-Zhang \cite{lei2023sufficient}  (for $L^{2}-$subcritical non-linearity), Li-Ye \cite{li2014existence}, Xia-Zhang \cite{xia2023normalized} (for $L^{2}-$supercritical non-linearity) and Li \cite{li2023nonexistence}, Chen-Chen \cite{chen2023normalized}  (for combined non-linearity).

On the other hand, nonlocal problems for normalized solutions involving Kirchhoff type operator had been studied by Ye \cite{ye2015sharp}, for the first time and considered the following equation:
\begin{equation}\label{e3}
\left\{\begin{array}{ll}
\left(a+b\|\nabla u\|_{2}^{2}\right) \Delta u & =\lambda u+ \mu |u|^{q-2}u+|u|^{p-2}u \ \hbox{in} \  \R^{N},  \ \ \ \\
\hspace{1.6cm} \int_{\R^{N}} | u|^{2}dx   &= c^2,
\end{array}\right.
\end{equation}
where $a>0, \ b>0$,  $\mu =0$ and  $N\leq 3$. In the view of Gagliardo-Nirenberg inequality \cite{weinstein1982nonlinear}, the author observed that the $L^{2}-$critical exponent for such type of problems is $2+\frac{8}{N}$ and obtained sharp existence of global constraint minimizer for $2<p<2+\frac{8}{N}$ and $c>0$.  Further, the author proved the existence of a normalized solution to the problem for $2+\frac{8}{N}<p<2^*$ by employing the concentration-compactness principle. For the combined nonlinearity, Zhang and Han \cite{zhang2022normalized} researched the existence of ground state normalized solution of the above problem with $\mu =1,~ N=3,~ \frac{14}{3}<q<p=6$. For $\mu> 0$ and $N=3$, if $2 < q < \frac{10}{3}$ and $\frac{14}{3} <p<6$, Li and Lou \cite{li2022normalized} proved a multiplicity result for \eqref{e3}, and if $2 < p <\frac{10}{3}<q=6$ or $\frac{14}{3} <q<p\leq 6$,  a ground state solution for \eqref{e3} was established.  For more works on Kirchhoff problems, one can refer \cite{liu2024normalized, ye2015sharp,ye2015existence,zeng2017existence} and references therein. 
            
   While it seems that there are very few works on  Kirchhoff-Choquard problems.  In \cite{liu2019multiple}, Liu investigated the following  N-dimension 
Choquard equation involving kirchhoff type perturbation 
   \begin{equation*}
\left\{\begin{array}{ll}
M\left(\,\int\limits_{\R^N} |\nabla u|^2\,dx\right) \Delta u & =\lambda u+ (I_{\mu}\ast|u|^{p})|u|^{p-2}u \ \hbox{in} \  \R^{N},  \ \ \ \\
\hspace{1.6cm} \int_{\R^{N}} | u|^{2}dx   &= c^2,
\end{array}\right.
\end{equation*}
  where $M(t) = a +bt$. Under different ranges of $p$, the author 
obtained the threshold values separating the existence and nonexistence of
critical points. Furthermore,  Liu investigates the behaviours of the Lagrange multipliers and the energies corresponding to the constrained critical points when $c \ra 0$ and $c \ra \infty$, respectively.  To the best of our knowledge, there is no article that addresses the Kirchhoff problem with $M(t)= a+bt^{\theta-1}$  for the concave-convex problem. 

Motivated by the works described above, in this paper, we study the existence and multiplicity of normalized solutions to problems involving combined convex-concave type Choquard nonlinearity. Precisely, we studied the following problem 
		\begin{equation*}
			\left\{\begin{array}{ll}
				-M\left(\,\displaystyle\int\limits_{\R^N} |\nabla u|^2\,dx\right) \Delta u & =\lambda u+\alpha (I_{\mu}\ast|u|^{q})|u|^{q-2}u+(I_{\mu}\ast|u|^{p})|u|^{p-2}u \ \hbox{in} \  \R^{N},  \ \ \ \\
				\hspace{2.5 cm} \displaystyle\int_{\R^{N}} | u|^{2}dx   &= c^2,
			\end{array}\right.
		\end{equation*}
		where $M(t)= a+bt^{\theta-1}$,   $N\geq 3$, $0<\mu<N$,  $a,b>0$, $1<\theta<2_\mu^*$, $\alpha>0$. 
Most of the literature mentioned above for Kirchhoff problems is for the case when $N =3$, as the case $N\geq 3$ is more complex. The techniques and ideas presented in the literature cannot be extended to the Choquard equations. Using the Gagliardo-Nirenberg inequality,  $A^*=\frac{2-\mu}{N}+2$  and  $B^*=\frac{2\theta-\mu}{N}+2$ is the $L^2$- critical exponent for \eqref{in1} and \eqref{1.1} respectively.  
To successfully investigate the existence and multiplicity results of \eqref{1.1}, we divide the problem into the following four cases based on $p$ and $q$. 
\begin{itemize}
\item[Case I:]  $2_{\mu,*}<q<A^{*}$ and $B^{*}<p<2_{\mu}^{*}$\\
Here $q$ is $L^2-$ subcritical, $p$ is $L^2-$ supercritical and HLS subcritical
\item[Case II :]$2_{\mu,*}<q<A^{*}$ and $p=2_{\mu}^{*}$\\
 Here $q$ is $L^2-$ subcritical,  $p$ is $L^2-$ supercritical and HLS  critical
\item[Case III :]  $B^{*}<q<p<2_{\mu}^{*}$\\
Here $p$ and $q$ are both $L^2-$ supercritical and HLS subcritical
\item[Case IV :]  $B^{*}<q<2_{\mu}^{*}$ and $p=2_{\mu}^{*}$\\
Here, $q$ is   $L^2-$ supercritical and HLS subcritical and $p$ is HLS critical
\end{itemize}
where $2_{\mu,*}=\frac{2N-\mu}{N}, \ 2_{\mu}^{*}=\frac{2N-\mu}{N-2}.$

We established the existence and multiplicity of positive solutions to \eqref{1.1} for all $N\geq 3$. Most of the literature mentioned above for Kirchhoff problems
is for the case when $N = 3$, as the case $N \geq 3$ is more intricate.  Actually, the techniques and ideas
presented in the previous papers cannot be extended to higher dimensions.  We study the problem with the Kirchhoff operator $M(t) =a + bt^{\theta-1}$, for all $\theta\in (1, 2^*_\mu)$ for the cases  I, II \& III. But for case IV, to get the existence of a solution, we have to explicitly find a positive root to the algebraic equation of degree $2^*_{\mu}-1$, which is an open problem. So,  we prove only for $\mu =2$ and $N=3$ which is a cubic equation. For more details, we refer to Lemma \ref{l5.4}. In the next section,
we give some preliminary framework of the problem and state the main results of the article.



\section{Preliminaries and Main Results }

This section of the article is intended to provide the variational setting. Further in this section, we state the main results of the current
article with a short sketch of the proof. We kickstart the section by defining the notion of a weak solution to \eqref{1.1} and defining the functional spaces that we will need in this article. 

Define the set, $H^{1}_{rad}(\R^{N})=\{u\in H^{1}(\R^{N}):u(x)=u(|x|)\}$ and $S_{c,r}=H^{1}_{rad}(\R^{N})\cap S_{c}$.
\begin{definition} 
A function  $u\in H^{1}(\R^{N})$ is a weak solution of  \eqref{1.1} if the following holds 
\begin{align}\label{wf2}
\notag-\left(a+b\|\nabla u\|_{2}^{2(\theta-1)}\right) \int_{\R^{N}} \nabla u \ \nabla  v  & =\alpha \int_{\R^{N}}(I_{\mu}\ast|u|^{q})|u|^{q-2}u v\\
&+\int_{\R^{N}} (I_{\mu}\ast|u|^{p})|u|^{p-2}u v,
 \ \forall v\in H^{1}(\R^{N}).
\end{align}
\end{definition}
The associated energy functional $J_{\alpha}:H^{1}(\R^{N})\rightarrow \R$ corresponding to \eqref{wf2} is given by 
$$J_{\alpha}(u)=\frac{a}{2}\|\nabla u\|^{2}_{2}+\frac{b}{2\theta}\|\nabla u\|^{2\theta}_{2}-\frac{\alpha }{2q}\int_{\R^{N}}(I_{\mu}\ast|u|^{q})|u|^{q}-\frac{1}{2p}\int_{\R^{N}}(I_{\mu}\ast|u|^{p})|u|^{p}.$$
One can easily prove that $J_{\alpha}\in C^{1}$ and  at any point $u\in H^{1}(\R^{N})$, we have
 \begin{align*}
J'_{\alpha}(u)(v)= \left(a+b\|\nabla u\|_{2}^{2(\theta-1)}\right) \int_{\R^{N}} \nabla u \ \nabla v  &-\alpha  \int_{\R^{N}}(I_{\mu}\ast|u|^{q})|u|^{q-2}u v\\
&-\int_{\R^{N}} (I_{\mu}\ast|u|^{p})|u|^{p-2}u v,
 \ \forall \ v\in H^{1}(\R^{N}).
\end{align*}
 We also define the following:  Poho\v{z}aev manifold associated with problem \eqref{1.1} 
 $$\mathfrak{P}_{\alpha}=\{u\in S_{c}:P_{\alpha}(u)=0\},$$
where $P_\al(u)=0$ is the Poho\v{z}aev identity associated with problem \eqref{1.1}  
\begin{equation*}
P_{\alpha}(u)=a\|\nabla u\|^{2}_{2}+b\|\nabla u\|^{2\theta}_{2}-\delta_{q}\alpha \int_{\R^{N}}(I_{\mu}\ast|u|^{q})|u|^{q}-\delta_{p}\int_{\R^{N}}(I_{\mu}\ast|u|^{p})|u|^{p}=0.
\end{equation*}

Let $u\in S_{c}$ and $s\in \R$, we define
$$(s\star u)(x)=e^{\frac{Ns}{2}}u(e^{s}x).$$ It can be observed that $s\star u\in S_{c}$. For any $u\in S_{c}$ and $\alpha\in \R^{+}$, we define the fiber map:
$$E_{u}(s)=J_{\alpha}(s\star u)=\frac{ae^{2s}}{2}\|\nabla u\|^{2}_{2}+\frac{be^{2\theta s}}{2\theta}\|\nabla u\|^{2\theta}_{2}-\frac{\alpha e^{2\delta_{q}qs}}{2q}\int_{\R^{N}}(I_{\mu}\ast|u|^{q})|u|^{q}-\frac{e^{2\delta_{p}ps}}{2p}\int_{\R^{N}}(I_{\mu}\ast|u|^{p})|u|^{p}.$$
Then
$$(E_{u})'(s)=ae^{2s}\|\nabla u\|^{2}_{2}+be^{2\theta s}\|\nabla u\|^{2\theta}_{2}-\alpha\delta_{q}e^{2\delta_{q}qs}\int_{\R^{N}}(I_{\mu}\ast|u|^{q})|u|^{q}-\delta_{p}e^{2\delta_{p}ps}\int_{\R^{N}}(I_{\mu}\ast|u|^{p})|u|^{p}$$
and 
\begin{align*}(E_{u})''(s)=2ae^{2s}\|\nabla u\|^{2}_{2}&+2\theta be^{2\theta s}\|\nabla u\|^{2\theta}_{2}-\alpha2q\delta_{q}^2e^{2\delta_{q}qs}\int_{\R^{N}}(I_{\mu}\ast|u|^{q})|u|^{q}\\
&-2p\delta_{p}^2e^{2\delta_{p}ps}\int_{\R^{N}}(I_{\mu}\ast|u|^{p})|u|^{p}.\end{align*}
\begin{rem}\label{r1}
Let $u\in S_{c}$ and $\alpha\in \R^{+}$. Then $(E_{u})'(s)=0$ if and only if $s\star u\in \mathfrak{P_{\alpha}},$ i.e.,
$$\mathfrak{P_{\alpha}}=\{u\in S_{c}:(E_{u})'(s)=0\}.$$
\end{rem}

Partition, $\mathfrak{P_{\alpha}}=\mathfrak{P_{\alpha}^{+}}\bigcup\mathfrak{P_{\alpha}^{-}}\bigcup\mathfrak{P_{\alpha}^0},$ where
\begin{align*}
\mathfrak{P_{\alpha}^{+}}&=\{u\in \mathfrak{P_{\alpha}}:(E_{u})''(0)>0\},\\
\mathfrak{P_{\alpha}^{-}}&=\{u\in \mathfrak{P_{\alpha}}:(E_{u})''(0)<0\},\\
\mathfrak{P}^{0}_{\alpha}&=\{u\in \mathfrak{P_{\alpha}}:(E_{u})''(0)=0\}.
\end{align*}

\begin{rem}
From the Pohozaev identity, $u$ is the critical point of $J_{\lambda}|_{S_{c}}$ if and only if $u\in \mathfrak{P_{\alpha}}(u)$.	
\end{rem}

\begin{prop}\cite[Lemma 2.7]{yu2023normalized}\label{pro1}
Let $q\in[2_{\mu,*},2_{\mu}^{*})$, $ \{u_{n}\}\subseteq H^{1}(\R^{N})$ such that $u_{n}\rightharpoonup u$ in $H^{1}(\R^{N})$. Then
$$\int_{\R^{N}}(I_{\mu}\ast|u_{n}|^{q})|u_{n}|^{q-2}u_{n} \varphi= \int_{\R^{N}}(I_{\mu}\ast|u|^{q})|u|^{q-2}u \varphi+o_{n}(1), \  \forall\varphi\in H^{1}(\R^{N}).$$
\end{prop}

\begin{prop}\cite[Lemma 2.2]{gao2018brezis}\label{ppro1}
	Let $ \{u_{n}\}\subseteq H^{1}(\R^{N})$ such that $u_{n}\rightharpoonup u$ in $H^{1}(\R^{N})$. Then
	$$\int_{\R^{N}}(I_{\mu}\ast|u_{n}|^{2_{\mu}^{*}})|u_{n}|^{2_{\mu}^{*}}= \int_{\R^{N}}(I_{\mu}\ast|u|^{2_{\mu}^{*}})|u|^{2_{\mu}^{*}}+\int_{\R^{N}}(I_{\mu}\ast|v_{n}|^{2_{\mu}^{*}})|v_{n}|^{2_{\mu}^{*}}+o_{n}(1)$$
	where, $v_{n}=u_{n}-u$.
\end{prop}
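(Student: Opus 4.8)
The plan is to recast the nonlocal functional as a bounded symmetric bilinear form on a Lebesgue space and then run a Brezis--Lieb argument. Write $2_{\mu}^{*}=\frac{2N-\mu}{N-2}$ and $t=\frac{2N}{2N-\mu}$, so that $t\in(1,2)$, its conjugate is $t'=\frac{2N}{\mu}$, and, decisively, $t\cdot 2_{\mu}^{*}=2^{*}$; hence $\big\||w|^{2_{\mu}^{*}}\big\|_{t}=\|w\|_{2^{*}}^{2_{\mu}^{*}}<\infty$ for every $w\in H^{1}(\R^{N})$ by the Sobolev embedding. For $f,g\in L^{t}(\R^{N})$ I would set $D(f,g)=\int_{\R^{N}}\int_{\R^{N}}\frac{f(x)g(y)}{|x-y|^{\mu}}\,dx\,dy$; by the Hardy--Littlewood--Sobolev inequality quoted above, $D$ is symmetric with $|D(f,g)|\le C\|f\|_{t}\|g\|_{t}$, and $\int_{\R^{N}}(I_{\mu}\ast|w|^{2_{\mu}^{*}})|w|^{2_{\mu}^{*}}=D\big(|w|^{2_{\mu}^{*}},|w|^{2_{\mu}^{*}}\big)$. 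Since $(u_{n})$ is bounded in $H^{1}(\R^{N})$ it is bounded in $L^{2^{*}}(\R^{N})$, and since the assertion has the form ``$+\,o_{n}(1)$'' it is enough to argue along an arbitrary subsequence; thus, via Rellich's theorem on balls and a diagonal extraction, I may assume $u_{n}\to u$ a.e.\ in $\R^{N}$. Put $v_{n}=u_{n}-u$, so $v_{n}\to0$ a.e.\ and $(v_{n})$ is bounded in $L^{2^{*}}(\R^{N})$.

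The first and main step is an iterated Brezis--Lieb identity in $L^{t}$:
\[
 r_{n}:=|u_{n}|^{2_{\mu}^{*}}-|v_{n}|^{2_{\mu}^{*}}-|u|^{2_{\mu}^{*}}\;\longrightarrow\;0\quad\text{in }L^{t}(\R^{N}).
\]
To prove it I would fix $\varepsilon>0$ and apply the elementary inequality $\big||x+y|^{2_{\mu}^{*}}-|x|^{2_{\mu}^{*}}\big|\le\varepsilon|x|^{2_{\mu}^{*}}+C_{\varepsilon}|y|^{2_{\mu}^{*}}$ (legitimate since $2_{\mu}^{*}>1$) with $x=v_{n}$, $y=u$, getting $|r_{n}|\le\varepsilon|v_{n}|^{2_{\mu}^{*}}+(C_{\varepsilon}+1)|u|^{2_{\mu}^{*}}$. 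Then $w_{n,\varepsilon}:=\big(|r_{n}|-\varepsilon|v_{n}|^{2_{\mu}^{*}}\big)^{+}$ is dominated by $(C_{\varepsilon}+1)|u|^{2_{\mu}^{*}}\in L^{t}(\R^{N})$ and tends to $0$ a.e.\ (since $r_{n}\to0$ and $v_{n}\to0$ a.e.), so $\|w_{n,\varepsilon}\|_{t}\to0$ by dominated convergence, whereas $\big\||v_{n}|^{2_{\mu}^{*}}\big\|_{t}=\|v_{n}\|_{2^{*}}^{2_{\mu}^{*}}$ stays bounded. From $|r_{n}|\le w_{n,\varepsilon}+\varepsilon|v_{n}|^{2_{\mu}^{*}}$ one gets $\limsup_{n}\|r_{n}\|_{t}\le C\varepsilon$, and letting $\varepsilon\to0$ proves the claim.

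The second step is purely algebraic. Set $a_{n}=|u_{n}|^{2_{\mu}^{*}}$, $b_{n}=|v_{n}|^{2_{\mu}^{*}}$, $c=|u|^{2_{\mu}^{*}}$, so $a_{n}=b_{n}+c+r_{n}$ in $L^{t}(\R^{N})$ with $\|r_{n}\|_{t}\to0$ and $\|b_{n}\|_{t},\|c\|_{t}$ bounded. Bilinearity and symmetry of $D$ yield
\[
 D(a_{n},a_{n})=D(b_{n},b_{n})+D(c,c)+D(r_{n},r_{n})+2D(b_{n},r_{n})+2D(c,r_{n})+2D(b_{n},c).
\]
The three terms containing $r_{n}$ tend to $0$ by the bound $|D(f,g)|\le C\|f\|_{t}\|g\|_{t}$. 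For the remaining cross term, $D(b_{n},c)=\int_{\R^{N}}(I_{\mu}\ast c)\,b_{n}$; since $(b_{n})$ is bounded in $L^{t}$ with $t>1$ and $b_{n}\to0$ a.e., we have $b_{n}\rightharpoonup0$ in $L^{t}(\R^{N})$, while $c\in L^{t}(\R^{N})$ gives $I_{\mu}\ast c\in L^{t'}(\R^{N})$ by the Riesz potential estimate, whence $D(b_{n},c)\to0$. Collecting everything, $D(a_{n},a_{n})-D(b_{n},b_{n})-D(c,c)=o_{n}(1)$, which is precisely the stated identity; passing back through the subsequence argument of the first paragraph concludes.

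The part I expect to demand the most care is Step~1, the Brezis--Lieb lemma at the exponent $2_{\mu}^{*}$: the natural integrability space is $L^{t}$, not $L^{2_{\mu}^{*}}$, because on all of $\R^{N}$ one controls only $\|u_{n}\|_{2^{*}}$ and not $\|u_{n}\|_{2_{\mu}^{*}}$, so the classical lemma cannot be invoked directly with exponent $2_{\mu}^{*}$. The other slightly delicate point is the vanishing of the nonlocal cross term $D(b_{n},c)$, which hinges on upgrading $b_{n}\to0$ a.e.\ to weak convergence in $L^{t}$ together with the mapping property $I_{\mu}\colon L^{t}\to L^{t'}$; the rest is routine bookkeeping with the Hardy--Littlewood--Sobolev bound.
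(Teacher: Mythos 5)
Your proof is correct. The paper does not prove this proposition but simply cites it from Gao--Yang \cite{gao2018brezis}, and your argument --- the Brezis--Lieb refinement giving $|u_{n}|^{2_{\mu}^{*}}-|v_{n}|^{2_{\mu}^{*}}-|u|^{2_{\mu}^{*}}\to 0$ in $L^{t}(\R^{N})$ with $t=\frac{2N}{2N-\mu}$, followed by bilinearity of the Hardy--Littlewood--Sobolev form and the vanishing of the cross term via $|v_{n}|^{2_{\mu}^{*}}\rightharpoonup 0$ in $L^{t}$ --- is essentially the standard proof given in that reference.
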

Now before stating the main results, we fix the following notations:
\begin{equation}\label{a2}
S_{HL}=\inf_{u\in D^{1,2}(\R^{N})\backslash\{0\}}\left\lbrace\dfrac{\|\nabla u\|_{2}^{2}}{\left( \int_{\R^{N}}(I_{\mu}\ast|u|^{2_{\mu}^{*}})|u|^{2_{\mu}^{*}}\right)^\frac{1}{2_{\mu}^{*}}} \right\rbrace,
\end{equation}
\begin{equation}\label{mu1}
\alpha_{1}=
		\left( \dfrac{b(\theta-q\delta_{q})}{\delta_{p}(p\delta_{p}-q\delta_{q}) C_{p}c^{2p(1-\delta_{p})}}\right) ^{\frac{\theta-q\delta_{q}}{p\delta_{p}-\theta}}\dfrac{b(p\delta_{p}-\theta) }{\delta_{q}(p\delta_{p}-q\delta_{q}) C_{q}c^{2q(1-\delta_{q})}},
\end{equation}
\begin{equation}\label{mu2}
\alpha_{2}= \frac{\kappa q}{C_{q}}\left[\dfrac{a}{c^{2q(1-\delta_{q})+\frac{2p(1-\delta_{p})(1-q\delta_{q})}{(p\delta_{p}-\theta)}}}\left(\dfrac{bp}{\theta C_{p}} \right)^{\frac{1-q\delta_{q}}{p\delta_{p}-\theta}}+\dfrac{2}{c^{2q(1-\delta_{q})+\frac{2p(1-\delta_{p})(\theta-q\delta_{q})}{(p\delta_{p}-\theta)}}}\dfrac{\left( \frac{b}{2\theta}\right) ^{\frac{p\delta_{p}-q\delta_{q}}{p\delta_{p}-\theta}}}{\left(\frac{C_{p}}{2p} \right) ^{\frac{\theta-q\delta_{q}}{p\delta_{p}-\theta}}}  \right],
 \end{equation}
\begin{equation}\label{mu3}
\alpha_{3}=\dfrac{\left( \left( S_{HL}^{\theta+1}4ab\right) ^\frac{2_{\mu}^{*}}{22_{\mu}^{*}-(\theta+1)}\frac{q}{(\theta-q\delta_{q})}\right)^\frac{(\theta-q\delta_{q})}{\theta}\left( \frac{b}{\delta_{q}}\right)^\frac{q\delta_{q}}{\theta} (2_{\mu}^{*}-\theta)}{(2_{\mu}^{*}-q\delta_{q})C_{q}c^{2q(1-\delta_{q})}},
\end{equation}
where,
\begin{equation*}
	C_{p} = \left\{\begin{array}{ll}
		&\hbox{the constant given in \eqref{GN1} if } 2_{\mu,*}<p<2_{\mu}^{*}, \\
		&S_{HL}^{-2_{\mu}^{*}} \hbox{ if } p=  2_{\mu}^{*},
	\end{array}\right.
\end{equation*}
\begin{equation}\label{kappa}
	\kappa= \left( \frac{\theta(\theta-q\delta_{q})(\theta-1)}{p\delta_{p}(p\delta_{p}-q\delta_{q})(p\delta_{p}-1)}\right) ^{\frac{\theta-q\delta_{q}}{p\delta_{p}-\theta}}-\left( \frac{\theta(\theta-q\delta_{q})(\theta-1)}{p\delta_{p}(p\delta_{p}-q\delta_{q})(p\delta_{p}-1)}\right) ^{\frac{p\delta_{p}-q\delta_{q}}{p\delta_{p}-\theta}}.
\end{equation}
\begin{rem}\label{rem1}
One can observe that
\begin{equation*}
r\delta_{r} \ \left\{\begin{array}{ll}
& < \theta \hbox{ if } r< B^{*}, \\
&=\theta \hbox{ if } r=  B^{*},\\
&>\theta \hbox{ if } r>  B^{*},
\end{array}\right.
\end{equation*}
and $\delta_{r}<1.$ Also, $\delta_{2_{\mu}^{*}}=1$.
\end{rem}
\begin{theorem}\label{t1}
Let $2_{\mu,*}<q<A^{*},  B^{*}<p<2_{\mu}^{*}$ and $0<\alpha<\min\{\alpha_{1},\alpha_{2}\}$, where $\alpha_{1}$ and $\alpha_{2}$ are defined in \eqref{mu1} and \eqref{mu2}, respectively. Then the following are true:
\begin{enumerate}
\item[(1)] $J_{\alpha}|_{S_{c}}$ has a critical point, say, $u_{c,\alpha,loc}$ such  that $J_{\alpha}(u_{c,\alpha,loc})=m(c,\alpha)<0$ for some $\lambda_{c,\alpha,loc}<0$. Also, $u_{c,\alpha,loc}$ is the local minimizer of $J_{\alpha}$  on the set $\Upsilon_{t_{0}}=\{u\in S_{c}:\|\nabla u\|_{2}<t_{0}\}$ for some $t_{0}>0$. Also, $u_{c,\alpha,loc}$ is a ground state solution of $J_{\alpha}|_{S_{c}}$ and any ground state solution of $J_{\alpha}|_{S_{c}}$ is a local minimizer of $J_{\alpha}$  on the set $\Upsilon_{t_{0}}$. Moreover, $u_{c,\alpha,loc}$ is positive and radially decreasing.
\item[(2)] $J_{\alpha}|_{S_{c}}$ has another critical point, say, $u_{c,\alpha,m}$ such  that $J_{\alpha}(u_{c,\alpha,m})=\varsigma(c,\alpha)$ for some $\lambda_{c,\alpha,m}<0$ and $\varsigma(c,\alpha)>0$. Moreover, $u_{c,\alpha,m}$ is positive and radially decreasing. 
\item[(3)] If $u_{c,\alpha,loc}\in S_{c}$ is a ground state solution for $J_{\alpha}|_{S_{c}}$ then $m(c,\alpha)\rightarrow 0^{-}, \ \|\nabla u_{c,\alpha,loc}\|_{2}\rightarrow 0$ as $\alpha\rightarrow 0^{+}.$
 \item[(4)] $\varsigma(c,\alpha)\rightarrow m(c,0)$ and $u_{c,\alpha,m}\rightarrow u_{0}$ in $H^{1}(\R^{N})$ as $\alpha\rightarrow 0^{+},$ where $m(c,0)=J_{0}(u_{0})$ and $u_{0}$ is the ground state solution of $J_{0}|_{S_{c}}$.
\end{enumerate}
\end{theorem}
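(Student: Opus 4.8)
The whole analysis is driven by two auxiliary objects. The first is the real function
$$g_{\alpha}(t)=\frac{a}{2}t^{2}+\frac{b}{2\theta}t^{2\theta}-\frac{\alpha C_{q}c^{2q(1-\delta_{q})}}{2q}\,t^{2q\delta_{q}}-\frac{C_{p}c^{2p(1-\delta_{p})}}{2p}\,t^{2p\delta_{p}},$$
which, by the Gagliardo--Nirenberg inequality \eqref{GN1}, satisfies $J_{\alpha}(u)\ge g_{\alpha}(\|\nabla u\|_{2})$ for all $u\in S_{c}$; the second is the family of fiber maps $E_{u}(s)=J_{\alpha}(s\star u)$ introduced above. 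Since $q<A^{*}$ gives $q\delta_{q}<1$ and $p>B^{*}$ gives $p\delta_{p}>\theta$ (Remark \ref{rem1}), the exponents are ordered $2q\delta_{q}<2<2\theta<2p\delta_{p}$, so $g_{\alpha}(0)=0$, $g_{\alpha}(t)<0$ for small $t>0$, and $g_{\alpha}$ then rises to a local maximum before decreasing to $-\infty$. An elementary discussion of the critical points of $g_{\alpha}$ (and, uniformly in $u$, of $E_{u}$) shows that exactly for $0<\alpha<\min\{\alpha_{1},\alpha_{2}\}$ the function $g_{\alpha}$ has a strict local minimum at some $t_{\min}(\alpha)$ with $g_{\alpha}(t_{\min})<0$ and a strict local maximum at some $t_{0}=t_{0}(\alpha)$ with $g_{\alpha}(t_{0})>0$: the threshold $\alpha_{1}$ forces these two points to be distinct (equivalently $\mathfrak{P}_{\alpha}^{0}=\emptyset$, so $\mathfrak{P}_{\alpha}=\mathfrak{P}_{\alpha}^{+}\cup\mathfrak{P}_{\alpha}^{-}$), while $\alpha_{2}$ makes the ``barrier'' value $g_{\alpha}(t_{0})$ strictly positive. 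Throughout I work on $S_{c,r}$: Schwarz symmetrization decreases $\|\nabla u\|_{2}$ (hence the Kirchhoff part of $J_{\alpha}$) and, by the Riesz rearrangement inequality, increases both Choquard terms, so it does not increase $J_{\alpha}$ and preserves the constraint.

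\emph{Part (1).} Set $m(c,\alpha)=\inf\{J_{\alpha}(u):u\in\overline{\Upsilon_{t_{0}}}\cap S_{c}\}$. Fixing $u\in S_{c,r}$ and letting $s\to-\infty$ in $E_{u}(s)$, whose leading term is $-\frac{\alpha}{2q}e^{2q\delta_{q}s}\int_{\R^{N}}(I_{\mu}\ast|u|^{q})|u|^{q}<0$ since $2q\delta_{q}<2$, yields $m(c,\alpha)<0$; on the other hand $J_{\alpha}\ge g_{\alpha}(t_{0})>0$ on $\partial\Upsilon_{t_{0}}$, so a minimizer, if any, lies in the interior. A minimizing sequence can be chosen radial and symmetric decreasing, and Ekeland's variational principle on $\overline{\Upsilon_{t_{0}}}\cap S_{c,r}$ produces a Palais--Smale sequence $\{u_{n}\}$ for $J_{\alpha}|_{S_{c}}$ at level $m(c,\alpha)$, bounded in $H^{1}$, with $\|\nabla u_{n}\|_{2}<t_{0}$, $P_{\alpha}(u_{n})=o_{n}(1)$ (test with $\tfrac{d}{ds}(s\star u_{n})|_{s=0}\in T_{u_{n}}S_{c}$) and Lagrange multipliers $\lambda_{n}\to\lambda$. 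Subtracting $P_{\alpha}(u_{n})=o_{n}(1)$ from $J_{\alpha}'(u_{n})u_{n}-\lambda_{n}c^{2}=o_{n}(1)$ and passing to the limit (the Choquard terms converge by compactness of $H^{1}_{rad}\hookrightarrow L^{r}$, $2<r<2^{*}$) gives $\lambda c^{2}=\alpha(\delta_{q}-1)\int_{\R^{N}}(I_{\mu}\ast|u|^{q})|u|^{q}+(\delta_{p}-1)\int_{\R^{N}}(I_{\mu}\ast|u|^{p})|u|^{p}$, which is $\le0$ because $\delta_{q}<1$ and $\delta_{p}<1$ (here $p<2_{\mu}^{*}$ is essential), and is strictly negative once $u\not\equiv0$ is checked, which follows from $P_{\alpha}(u_{n})=o_{n}(1)$ and $m(c,\alpha)<0$. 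With $\lambda<0$ and $a>0$, extracting a further subsequence so that $b\|\nabla u_{n}\|_{2}^{2(\theta-1)}\to B_{0}\ge0$, testing the equations for $u_{n}$ and $u$ with $u_{n}-u$ and using Proposition \ref{pro1} gives $u_{n}\to u=:u_{c,\alpha,loc}$ in $H^{1}$; hence $u\in S_{c}$, $\|\nabla u\|_{2}<t_{0}$, $J_{\alpha}(u)=m(c,\alpha)<0$, $B_{0}=b\|\nabla u\|_{2}^{2(\theta-1)}$, and $(u,\lambda)$ solves \eqref{1.1}. Replacing $u$ by $|u|$ and then by its symmetric decreasing rearrangement, the variational characterization together with the equality cases of the rearrangement inequalities and the strong maximum principle force $u>0$ and radially decreasing. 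Finally, every critical point of $J_{\alpha}|_{S_{c}}$ lies on $\mathfrak{P}_{\alpha}=\mathfrak{P}_{\alpha}^{+}\cup\mathfrak{P}_{\alpha}^{-}$; the fiber-map picture gives $\mathfrak{P}_{\alpha}^{+}\subset\Upsilon_{t_{0}}$ with $\inf_{\mathfrak{P}_{\alpha}^{+}}J_{\alpha}=m(c,\alpha)<0$, whereas $\inf_{\mathfrak{P}_{\alpha}^{-}}J_{\alpha}=\varsigma(c,\alpha)>0$ (Part (2)), so $m(c,\alpha)$ is the least critical value and $u_{c,\alpha,loc}$ is a ground state; and any ground state $v$ has $J_{\alpha}(v)=m(c,\alpha)<0<\varsigma(c,\alpha)$, hence $v\in\mathfrak{P}_{\alpha}^{+}\subset\Upsilon_{t_{0}}$, i.e. $v$ is a local minimizer on $\Upsilon_{t_{0}}$.

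\emph{Part (2).} The positive barrier $g_{\alpha}(t_{0})>0$, together with the negative-energy points $u_{c,\alpha,loc}$ and $s\star u_{c,\alpha,loc}$ with $s$ large (where $E_{u_{c,\alpha,loc}}(s)\to-\infty$), provides a mountain-pass geometry for $J_{\alpha}$ on $S_{c,r}$ at a level $\varsigma(c,\alpha)\ge g_{\alpha}(t_{0})>0$, and a standard argument identifies $\varsigma(c,\alpha)$ with $\inf_{\mathfrak{P}_{\alpha}^{-}}J_{\alpha}$. Running the min-max for the auxiliary map $(s,u)\mapsto J_{\alpha}(s\star u)$ on $\R\times S_{c,r}$ in the spirit of Jeanjean yields a Palais--Smale sequence $\{u_{n}\}$ for $J_{\alpha}|_{S_{c}}$ at level $\varsigma(c,\alpha)$ with $P_{\alpha}(u_{n})\to0$. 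Now $J_{\alpha}(u_{n})-\frac{1}{2p\delta_{p}}P_{\alpha}(u_{n})$ equals $a\big(\tfrac12-\tfrac{1}{2p\delta_{p}}\big)\|\nabla u_{n}\|_{2}^{2}+b\big(\tfrac{1}{2\theta}-\tfrac{1}{2p\delta_{p}}\big)\|\nabla u_{n}\|_{2}^{2\theta}-\alpha\big(\tfrac{1}{2q}-\tfrac{\delta_{q}}{2p\delta_{p}}\big)\int_{\R^{N}}(I_{\mu}\ast|u_{n}|^{q})|u_{n}|^{q}$; since $1<\theta<p\delta_{p}$ and $q\delta_{q}<1$, the first two coefficients are positive, and by \eqref{GN1} with $2q\delta_{q}<2$ the last term is of lower order, so this combination is coercive in $\|\nabla u_{n}\|_{2}$, whence $\{u_{n}\}$ is bounded in $H^{1}$. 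The compactness argument of Part (1) (again $\lambda<0$ from the Poho\v{z}aev identity, radial compactness for the HLS-subcritical nonlinearities) then gives a nontrivial radial limit $u_{c,\alpha,m}\in S_{c}$ solving \eqref{1.1} with $J_{\alpha}(u_{c,\alpha,m})=\varsigma(c,\alpha)>0$ and $\lambda_{c,\alpha,m}<0$; positivity and radial monotonicity follow as before, carrying out the min-max over symmetric decreasing functions.

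\emph{Parts (3)--(4), and the main difficulty.} For (3): evaluating $J_{\alpha}-\frac{1}{2p\delta_{p}}P_{\alpha}$ at $u_{c,\alpha,loc}$ and using $P_{\alpha}(u_{c,\alpha,loc})=0$ gives $m(c,\alpha)\ge A_{1}\|\nabla u_{c,\alpha,loc}\|_{2}^{2}-\alpha A_{3}t_{0}^{2q\delta_{q}}$ for positive constants $A_{1},A_{3}$ (discarding the nonnegative $\|\nabla u\|^{2\theta}$ term and bounding $\|\nabla u\|^{2q\delta_{q}}<t_{0}^{2q\delta_{q}}$); combined with $m(c,\alpha)<0$ this forces $\|\nabla u_{c,\alpha,loc}\|_{2}^{2}<(A_{3}/A_{1})t_{0}^{2q\delta_{q}}\,\alpha\to0$, and then $0>m(c,\alpha)\ge-\alpha A_{3}t_{0}^{2q\delta_{q}}\to0$, i.e. $m(c,\alpha)\to0^{-}$. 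For (4): the same coercive combination bounds $\{u_{c,\alpha,m}\}$ in $H^{1}$ uniformly for small $\alpha$, and $\|\nabla u_{c,\alpha,m}\|_{2}$ stays bounded away from $0$ (otherwise $P_{\alpha}(u_{c,\alpha,m})=0$ would force $\varsigma(c,\alpha)\to0$, contradicting $\varsigma(c,\alpha)>0$). Testing $\varsigma(c,\alpha)=\inf_{\mathfrak{P}_{\alpha}^{-}}J_{\alpha}$ against a dilate $s(\alpha)\star u_{0}$ of the ground state $u_{0}$ of $J_{0}|_{S_{c}}$ — obtained by perturbing the nondegenerate maximum of $E_{u_{0}}$ at $\alpha=0$, so that $s(\alpha)\star u_{0}\in\mathfrak{P}_{\alpha}^{-}$ and $J_{\alpha}(s(\alpha)\star u_{0})\to J_{0}(u_{0})=m(c,0)$ — gives $\limsup_{\alpha\to0^{+}}\varsigma(c,\alpha)\le m(c,0)$; conversely, since $u_{c,\alpha,m}\in\mathfrak{P}_{\alpha}^{-}$ and the maximum of $E_{u_{c,\alpha,m}}$ at $\alpha=0$ is attained at a uniformly bounded $s$, one has $\varsigma(c,\alpha)=\max_{s}J_{\alpha}(s\star u_{c,\alpha,m})\ge\max_{s}J_{0}(s\star u_{c,\alpha,m})-o_{\alpha}(1)\ge m(c,0)-o_{\alpha}(1)$ by the mountain-pass characterization of $m(c,0)$; hence $\varsigma(c,\alpha)\to m(c,0)$. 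Extracting a weak limit of $\{u_{c,\alpha,m}\}$ and rerunning the compactness step with $\alpha\to0$ produces strong $H^{1}$ convergence to a critical point of $J_{0}|_{S_{c}}$ with energy $m(c,0)$, i.e. to $u_{0}$. The crux of the whole argument is twofold: (i) the precise calculus of $g_{\alpha}$ and of the fibers $E_{u}$ that fixes $t_{0}$, forces $\mathfrak{P}_{\alpha}^{0}=\emptyset$, and yields the chain $\inf_{\mathfrak{P}_{\alpha}^{+}}J_{\alpha}=m(c,\alpha)<0<\varsigma(c,\alpha)=\inf_{\mathfrak{P}_{\alpha}^{-}}J_{\alpha}$, which is exactly where the explicit thresholds $\alpha_{1},\alpha_{2}$ are unavoidable; and (ii) ruling out loss of mass for the Palais--Smale and minimizing sequences, which here is made routine by working on $S_{c,r}$ (so the HLS-subcritical Choquard terms are compact) and by the strict negativity of the Lagrange multiplier, itself an immediate consequence of the Poho\v{z}aev identity and $\delta_{q},\delta_{p}<1$.
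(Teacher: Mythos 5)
Your proposal is correct and follows essentially the same route as the paper: the auxiliary function $g$ with the thresholds $\alpha_{1},\alpha_{2}$ to force $\mathfrak{P}_{\alpha}^{0}=\emptyset$ and the two-well fiber geometry, Ekeland's principle plus projection onto $\mathfrak{P}_{\alpha}^{+}$ and the radial compactness lemma (with $\lambda<0$ extracted from the Poho\v{z}aev identity) for the local minimizer/ground state, the Jeanjean-type min-max on $\R\times S_{c,r}$ for the mountain-pass solution, and the same $\alpha\to0^{+}$ asymptotics. The only cosmetic deviations are that your $t_{0}$ is the argmax of $g$ rather than its first zero, and that in part (3) you deduce $\|\nabla u_{c,\alpha,loc}\|_{2}\to0$ from the coercive combination $J_{\alpha}-\frac{1}{2p\delta_{p}}P_{\alpha}$ instead of from $t_{0}(\alpha)\to0$; both are equivalent.
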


\begin{theorem}\label{t2}
	Let $2_{\mu,*}<q<A^{*}$, $p=2_{\mu}^{*}$ and $0<\alpha<\min\{\alpha_{1},\alpha_{2},\alpha_{3}\}$, where $\alpha_{1}, \alpha_{2}$ and $\alpha_{3}$ are defined in \eqref{mu1}, \eqref{mu2} and \eqref{mu3}, respectively. Then the following are true:
	\begin{enumerate}
		\item[(1)] $J_{\alpha}|_{S_{c}}$ has a critical point, say, $u_{c,\alpha,loc}$ such  that $J_{\alpha}(u_{c,\alpha,loc})=m(c,\alpha)<0$ for some $\lambda_{c,\alpha,loc}<0$. Also,  $u_{c,\alpha,loc}$ is the local minimizer of $J_{\alpha}$  on the set $\Upsilon_{t_{0}}=\{u\in S_{c}:\|\nabla u\|_{2}<t_{0}\}$ for some $t_{0}>0$. Also, $u_{c,\alpha,loc}$ is a ground state solution of $J_{\alpha}|_{S_{c}}$ and any ground state solution of $J_{\alpha}|_{S_{c}}$ is a local minimizer of $J_{\alpha}$  on the set $\Upsilon_{t_{0}}$. Moreover, $u_{c,\alpha,loc}$ is positive and radially decreasing.
\item[(2)] If $u_{c,\alpha,loc}\in S_{c}$ is a ground state solution for $J_{\alpha}|_{S_{c}}$ then $m(c,\alpha)\rightarrow 0^{-}, \ \|\nabla u_{c,\alpha,loc}\|_{2}\rightarrow 0$ as $\alpha\rightarrow 0^{+}.$
	\end{enumerate}
\end{theorem}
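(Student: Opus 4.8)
The plan is to follow the scheme used for Theorem~\ref{t1}(1),(3); the genuinely new difficulty here is the loss of compactness caused by the Choquard--critical exponent $p=2_{\mu}^{*}$ (so $\delta_p=1$ and $C_p=S_{HL}^{-2_{\mu}^{*}}$), which is absorbed by the extra smallness requirement $\alpha<\alpha_3$.

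\textbf{Step 1: local geometry on $S_c$.} Combining \eqref{GN1} with \eqref{a2}, for every $u\in S_c$, writing $t=\|\nabla u\|_2$, one has $J_\alpha(u)\ge g_\alpha(t)$, where
\begin{equation*}
g_\alpha(t):=\frac a2 t^2+\frac{b}{2\theta}t^{2\theta}-\frac{\alpha C_q c^{2q(1-\delta_q)}}{2q}\,t^{2q\delta_q}-\frac{S_{HL}^{-2_{\mu}^{*}}}{2\cdot 2_{\mu}^{*}}\,t^{2\cdot 2_{\mu}^{*}}.
\end{equation*}
Since $q<A^{*}$ forces $q\delta_q<1$ and $1<\theta<2_{\mu}^{*}$, the auxiliary function $h(t):=\frac a2 t^2+\frac{b}{2\theta}t^{2\theta}-\frac{S_{HL}^{-2_{\mu}^{*}}}{2\cdot 2_{\mu}^{*}}t^{2\cdot 2_{\mu}^{*}}$ is positive for small $t>0$ and has a strictly positive maximum at some $t_{*}>0$. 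I fix $t_0\in(0,t_{*})$ independent of $\alpha$ and check, as in Theorem~\ref{t1}, that for $0<\alpha<\min\{\alpha_1,\alpha_2,\alpha_3\}$ the function $g_\alpha$ is negative for small $t>0$, bounded below on $[0,t_0]$, and satisfies $g_\alpha(t_0)\ge\gamma>0$ for a fixed $\gamma$; the numbers $\alpha_1,\alpha_2$ in \eqref{mu1}--\eqref{mu2} are calibrated precisely so these inequalities hold. Then $m(c,\alpha):=\inf_{\Upsilon_{t_0}}J_\alpha$ is finite, and testing with $s\star w$ for a fixed $w\in S_{c,r}$ and letting $s\to-\infty$ (the $q$-term, with $2q\delta_q<2$, dominates) yields $m(c,\alpha)<0$, so the infimum is attained in the interior of $\Upsilon_{t_0}$.

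\textbf{Step 2: compactness of the minimizing sequence.} Take a minimizing sequence for $m(c,\alpha)$; by Schwarz symmetrization (the P\'olya--Szeg\H{o} inequality for the Dirichlet term and Riesz's rearrangement inequality for the two Choquard terms) it may be assumed radially non-increasing, and by Ekeland's variational principle I obtain a Palais--Smale sequence $\{u_n\}\subset S_{c,r}$ for $J_\alpha|_{S_c}$ at level $m(c,\alpha)$ with $\|\nabla u_n\|_2<t_0$, with Lagrange multipliers $\lambda_n$. Up to a subsequence $u_n\rightharpoonup u$ in $H^{1}_{rad}(\R^N)$ and $\lambda_n\to\lambda_{c,\alpha,loc}$; the subcritical Choquard term passes to the limit by Proposition~\ref{pro1}, and the critical one splits via Proposition~\ref{ppro1} with $v_n=u_n-u$. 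Inserting this splitting into $J_\alpha(u_n)\to m(c,\alpha)$, $J'_\alpha(u_n)\to0$ and using the sharp inequality $\|\nabla v_n\|_2^2\ge S_{HL}\big(\int_{\R^N}(I_\mu\ast|v_n|^{2_{\mu}^{*}})|v_n|^{2_{\mu}^{*}}\big)^{1/2_{\mu}^{*}}$, one shows that if $\ell:=\lim\|\nabla v_n\|_2^2>0$ then $m(c,\alpha)$ is bounded below by a positive threshold built only from $S_{HL},a,b$; the hypothesis $\alpha<\alpha_3$ forces $m(c,\alpha)$ strictly below that threshold, a contradiction. Hence $\ell=0$, so $u_n\to u$ in $H^1(\R^N)$, $\|\nabla u\|_2<t_0$ (again from $g_\alpha$), $J_\alpha(u)=m(c,\alpha)<0$, and $(u,\lambda_{c,\alpha,loc})$ solves \eqref{1.1}.

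\textbf{Step 3: sign of $\lambda$, positivity, ground state, asymptotics.} Combining the constraint $J'_\alpha(u)u=0$ with the Pohozaev identity $P_\alpha(u)=0$ kills the top-order growth and leaves $\lambda_{c,\alpha,loc}c^2$ equal to a strictly negative combination of powers of $\|\nabla u\|_2$, so $\lambda_{c,\alpha,loc}<0$. As the whole construction used nonnegative radially non-increasing functions, $u\ge0$ is radially non-increasing, and the strong maximum principle applied to $-(a+b\|\nabla u\|_2^{2(\theta-1)})\Delta u=\lambda_{c,\alpha,loc}u+\alpha(I_\mu\ast|u|^q)|u|^{q-2}u+(I_\mu\ast|u|^p)|u|^{p-2}u$ gives $u>0$. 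For the ground-state statement I analyse the fiber map $E_u(s)=J_\alpha(s\star u)$ (Remark~\ref{r1}): for admissible $\alpha$ every $u\in S_c$ has a point $s^{+}(u)\star u\in\mathfrak{P}_\alpha^{+}\cap\Upsilon_{t_0}$ with negative energy, $J_\alpha\ge0$ on $\mathfrak{P}_\alpha^{-}\cup\mathfrak{P}_\alpha^{0}$, and $\mathfrak{P}_\alpha^{+}$ is exactly the portion of $\mathfrak{P}_\alpha$ lying in $\Upsilon_{t_0}$; since any critical point of $J_\alpha|_{S_c}$ belongs to $\mathfrak{P}_\alpha$, any ground state has negative energy, hence lies in $\mathfrak{P}_\alpha^{+}\subset\Upsilon_{t_0}$ and realizes $m(c,\alpha)$, and $u_{c,\alpha,loc}$ is such a point. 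Finally, for part (2), using $\delta_p=1$ in $P_\alpha(u)=0$ together with $J_\alpha(u)<0$ gives
\begin{equation*}
0>J_\alpha(u)-\frac{1}{2\cdot 2_{\mu}^{*}}P_\alpha(u)=\frac a2\Big(1-\frac{1}{2_{\mu}^{*}}\Big)\|\nabla u\|_2^2+\frac{b}{2\theta}\Big(1-\frac{\theta}{2_{\mu}^{*}}\Big)\|\nabla u\|_2^{2\theta}-\frac{\alpha}{2q}\Big(1-\frac{q\delta_q}{2_{\mu}^{*}}\Big)\!\int_{\R^N}\!(I_\mu\ast|u|^q)|u|^q,
\end{equation*}
so by \eqref{GN1} and $q\delta_q<1$ we get $\|\nabla u_{c,\alpha,loc}\|_2^{\,2-2q\delta_q}\le C\alpha$, hence $\|\nabla u_{c,\alpha,loc}\|_2\to0$; substituting back, $m(c,\alpha)=J_\alpha(u_{c,\alpha,loc})\to0$, and since $m(c,\alpha)<0$ throughout, $m(c,\alpha)\to0^{-}$.

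\textbf{Main obstacle.} The decisive step is the compactness analysis in Step 2: because the Kirchhoff coefficient $a+b\|\nabla u_n\|_2^{2(\theta-1)}$ prevents the Dirichlet norm from splitting additively along $v_n=u_n-u$, the classical Brezis--Nirenberg threshold for the critical Choquard term must be replaced by one obtained through a one-dimensional minimization over the possible limiting gradient norm --- which is exactly what produces the involved exponents in $\alpha_3$ in \eqref{mu3} --- and checking the strict inequality $m(c,\alpha)<\text{threshold}$ uniformly over the admissible range of $\alpha$ is the delicate point.
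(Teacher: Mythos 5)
Your overall architecture (local minimization on $\Upsilon_{t_0}$, symmetrization plus Ekeland to get a Palais--Smale sequence with $P_\alpha(u_n)\to 0$, a splitting argument for the critical Choquard term, then the ground-state and asymptotic statements) is the same as the paper's, and your Steps 1 and 3 and the proof of part (2) are sound; in fact your derivation of $\|\nabla u_{c,\alpha,loc}\|_2^{2-2q\delta_q}\le C\alpha$ from $J_\alpha-\frac{1}{2\cdot 2_\mu^*}P_\alpha$ is a slightly more quantitative route than the paper's (which just uses $t_0\to 0$ as $\alpha\to 0^+$ from Lemma \ref{l4.3}).

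However, Step 2 --- the decisive compactness step --- has a genuine gap: you have misplaced where $\alpha<\alpha_3$ enters, and as written your contradiction does not close. If $\ell=\lim\|\nabla v_n\|_2^2>0$, the splitting does \emph{not} give ``$m(c,\alpha)$ is bounded below by a positive threshold built only from $S_{HL},a,b$''; it gives
$m(c,\alpha)\ \ge\ \tilde J_\alpha(u)+\bigl(S_{HL}^{\theta+1}4ab\bigr)^{\frac{2_\mu^*}{2\cdot 2_\mu^*-(\theta+1)}}\frac{2_\mu^*-\theta}{2\cdot 2_\mu^*\,\theta}$,
where $u\neq 0$ is the weak limit, which satisfies only $\|u\|_2\le c$ and solves the \emph{modified} equation with coefficient $a+bD$, $D=\lim\|\nabla u_n\|_2^{2(\theta-1)}$ (this is why the paper works with $\tilde J_\alpha$ and the modified Pohozaev identity $Q_\alpha(u)=0$ rather than $J_\alpha$, $P_\alpha$). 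The term $\tilde J_\alpha(u)$ can be \emph{negative} because of the subcritical Choquard term with coefficient $\alpha$, so the inequality above does not by itself contradict $m(c,\alpha)<0$. Moreover your stated use of $\alpha_3$ --- ``forces $m(c,\alpha)$ strictly below that threshold'' --- is vacuous, since $m(c,\alpha)<0$ is below any positive threshold for every $\alpha$; on your reading the hypothesis $\alpha<\alpha_3$ is never actually used, which cannot be right. The missing step (and the entire reason $\alpha_3$ has the form \eqref{mu3}) is the lower bound on the weak limit's energy: using $Q_\alpha(u)=0$ and \eqref{GN1} one gets $\tilde J_\alpha(u)\ge f(\|\nabla u\|_2)$ with $f(t)=b\frac{2_\mu^*-\theta}{2\cdot 2_\mu^*\theta}t^{2\theta}-\alpha\frac{2_\mu^*-q\delta_q}{2q\cdot 2_\mu^*}C_qc^{2q(1-\delta_q)}t^{2q\delta_q}$, and $\alpha<\alpha_3$ is exactly the condition under which $\min_{t\ge 0}f(t)$ exceeds minus the threshold, yielding $0>m(c,\alpha)\ge \tilde J_\alpha(u)+\text{threshold}>0$. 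Your ``Main obstacle'' paragraph gestures at this one-dimensional minimization, but the argument you actually wrote in Step 2 does not perform it, so the contradiction is not established.
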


\begin{theorem}\label{t3}
Let  $B^{*}<q<p<2_{\mu}^{*}$ and $\alpha>0$. Then the following are true:
\begin{enumerate}
\item[(1)] $J_{\alpha}|_{S_{c}}$ has a critical point via the mountain pass theorem, say, $u_{c,\alpha,m}$ such that $J_{\alpha}(u_{c,\alpha,m})=\varsigma(c,\alpha)>0$. Also,  $u_{c,\alpha,m}$ is a positive radial solution to \eqref{1.1} for some $\lambda_{c,\alpha,m}<0$. Moreover, $u_{c,\alpha,m}\in S_{c}$ is a ground state solution for $J_{\alpha}|_{S_{c}}$. 
\item[(2)] $\varsigma(c,\alpha)\rightarrow m(c,0)$ and $u_{c,\alpha,m}\rightarrow u_{0}$ in $H^{1}(\R^{N})$ as $\alpha\rightarrow 0^{+},$ where $m(c,0)=J_{0}(u_{0})$ and $u_{0}$ is the  ground state solution of $J_{0}|_{S_{c}}$.
\end{enumerate}
\end{theorem}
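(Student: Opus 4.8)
The plan is to show that $\varsigma(c,\alpha)$ is at once a mountain--pass level for $J_\alpha$ restricted to the radial sphere $S_{c,r}$ and the minimum of $J_\alpha$ over the Poho\v{z}aev manifold $\mathfrak{P}_\alpha$, and then to recover compactness in the radial setting. First I would analyse the fiber maps $E_u$, $u\in S_{c,r}$. Since $B^*<q<p<2_\mu^*$, Remark~\ref{rem1} gives $1<\theta<q\delta_q<p\delta_p$ and $\delta_q,\delta_p<1$, so in $E_u(s)$ the four exponents are ordered $2<2\theta<2q\delta_q<2p\delta_p$; hence $E_u(s)\to 0^+$ as $s\to-\infty$ (and is positive there) and $E_u(s)\to-\infty$ as $s\to+\infty$. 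Eliminating $\int_{\R^N}(I_\mu\ast|u|^q)|u|^q$ between $P_\alpha(u)=0$ and the formula for $(E_u)''(0)$ should give, on $\mathfrak{P}_\alpha$,
\[
(E_u)''(0)=2a(1-q\delta_q)\|\nabla u\|_2^2+2b(\theta-q\delta_q)\|\nabla u\|_2^{2\theta}+2\delta_p(q\delta_q-p\delta_p)\!\int_{\R^N}(I_\mu\ast|u|^p)|u|^p<0,
\]
so $\mathfrak{P}_\alpha^{0}=\mathfrak{P}_\alpha^{+}=\emptyset$ and $\mathfrak{P}_\alpha=\mathfrak{P}_\alpha^{-}$; consequently each $E_u$ has a unique critical point $s_u$, which is its strict global maximum, $s_u\star u$ is the unique element of $\mathfrak{P}_\alpha$ of the form $s\star u$, and $u\mapsto s_u$ is continuous.

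Next I would set $\varsigma(c,\alpha):=\inf_{u\in S_{c,r}}\max_{s\in\R}J_\alpha(s\star u)=\inf_{u\in\mathfrak{P}_\alpha\cap S_{c,r}}J_\alpha(u)$. Eliminating $\int_{\R^N}(I_\mu\ast|u|^q)|u|^q$ from $J_\alpha(u)$ via $P_\alpha(u)=0$ gives, on $\mathfrak{P}_\alpha$,
\[
J_\alpha(u)=a\Big(\tfrac12-\tfrac1{2q\delta_q}\Big)\|\nabla u\|_2^2+b\Big(\tfrac1{2\theta}-\tfrac1{2q\delta_q}\Big)\|\nabla u\|_2^{2\theta}+\tfrac{p\delta_p-q\delta_q}{2pq\delta_q}\!\int_{\R^N}(I_\mu\ast|u|^p)|u|^p,
\]
with all coefficients positive; together with the lower bound $\|\nabla u\|_2\ge\rho_0>0$ on $\mathfrak{P}_\alpha$ (which follows from $P_\alpha(u)=0$, \eqref{GN1} and $q\delta_q,p\delta_p>1$) this yields $\varsigma(c,\alpha)\ge a(\tfrac12-\tfrac1{2q\delta_q})\rho_0^2>0$. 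Since $J_\alpha(u)\le\tfrac a2\|\nabla u\|_2^2+\tfrac b{2\theta}\|\nabla u\|_2^{2\theta}\to 0<\varsigma(c,\alpha)$ as $\|\nabla u\|_2\to 0$, while $J_\alpha(s\star u)\to-\infty$ and $\mathfrak{P}_\alpha$ separates these two regimes, the level $\varsigma(c,\alpha)$ has a mountain--pass character; I would then apply the minimax principle to the auxiliary functional $(s,u)\mapsto J_\alpha(s\star u)$ on $\R\times S_{c,r}$, following Jeanjean's approach, to obtain a Palais--Smale sequence $\{u_n\}\subset S_{c,r}$ for $J_\alpha|_{S_{c,r}}$ at level $\varsigma(c,\alpha)$ with $P_\alpha(u_n)\to 0$.

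For part (1), boundedness of $\{u_n\}$ in $H^1(\R^N)$ follows from $J_\alpha(u_n)\to\varsigma(c,\alpha)$, $P_\alpha(u_n)\to 0$ and the displayed identity (up to $o_n(1)$); up to a subsequence, $u_n\rightharpoonup\bar u$ in $H^1_{rad}(\R^N)$. The compact embedding $H^1_{rad}(\R^N)\hookrightarrow L^t(\R^N)$, $2<t<2^*$, together with the HLS inequality and Proposition~\ref{pro1}, gives convergence of both Choquard terms and of the associated nonlinearities. If $\bar u=0$, then $P_\alpha(u_n)\to 0$ would force $\|\nabla u_n\|_2\to 0$, contradicting $\|\nabla u_n\|_2\ge\rho_0$; hence $\bar u\neq 0$. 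By symmetric criticality one passes from $S_{c,r}$ to $S_c$, and since $\mathfrak{P}_\alpha^{0}=\emptyset$ the Poho\v{z}aev multiplier vanishes, so there are $\lambda_n\to\bar\lambda$ with $J_\alpha'(u_n)-\lambda_n u_n\to 0$ in $H^{-1}(\R^N)$; feeding $P_\alpha(u_n)=o_n(1)$ into $J_\alpha'(u_n)(u_n)=\lambda_n c^2+o_n(1)$ yields $\bar\lambda c^2=\alpha(\delta_q-1)\int_{\R^N}(I_\mu\ast|\bar u|^q)|\bar u|^q+(\delta_p-1)\int_{\R^N}(I_\mu\ast|\bar u|^p)|\bar u|^p<0$ because $\delta_q,\delta_p<1$ and $\bar u\neq 0$. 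Testing the equation for $u_n$ against $u_n-\bar u$, using the convergence of the nonlocal terms, $\bar\lambda<0$ and $a+b\lim\|\nabla u_n\|_2^{2(\theta-1)}>0$, upgrades this to $u_n\to\bar u$ strongly in $H^1(\R^N)$; thus $\bar u\in S_c\cap\mathfrak{P}_\alpha$ solves \eqref{1.1} with $\bar\lambda<0$ and $J_\alpha(\bar u)=\varsigma(c,\alpha)$. Replacing $\bar u$ by $s_{|\bar u|}\star|\bar u|$ (which lies in $\mathfrak{P}_\alpha$, is nonnegative, and has energy $\le\varsigma(c,\alpha)$, hence $=\varsigma(c,\alpha)$ by minimality) and invoking the strong maximum principle with $\bar\lambda<0$ produces a positive radial solution $u_{c,\alpha,m}$; since every critical point of $J_\alpha|_{S_c}$ lies on $\mathfrak{P}_\alpha$, $\varsigma(c,\alpha)=\inf_{\mathfrak{P}_\alpha}J_\alpha$ is the least critical value and $u_{c,\alpha,m}$ is a ground state.

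For part (2), existence of a ground state $u_0\in S_c\cap\mathfrak{P}_0$ with $J_0(u_0)=m(c,0)$ comes from the same scheme with $\alpha=0$. Picking the unique $s_\alpha$ with $s_\alpha\star u_0\in\mathfrak{P}_\alpha$, one has $\varsigma(c,\alpha)\le J_\alpha(s_\alpha\star u_0)$, and since $u_0\in\mathfrak{P}_0$ we get $s_\alpha\to 0$ and $J_\alpha(s_\alpha\star u_0)\to m(c,0)$, so $\limsup_{\alpha\to 0^+}\varsigma(c,\alpha)\le m(c,0)$. Conversely, writing $u_\alpha=u_{c,\alpha,m}$, the displayed identity for $J_\alpha$ on $\mathfrak{P}_\alpha$ (constant independent of $\alpha$) and the bound on $\varsigma(c,\alpha)$ make $\{u_\alpha\}$ bounded in $H^1(\R^N)$ uniformly for small $\alpha$, and $P_\alpha(u_\alpha)=0$ with $\alpha$ small keeps $\|\nabla u_\alpha\|_2\ge\rho_0>0$; passing to $u_\alpha\rightharpoonup\bar u$ in $H^1_{rad}(\R^N)$, with $\alpha\int_{\R^N}(I_\mu\ast|u_\alpha|^q)|u_\alpha|^q\to 0$, and repeating the compactness argument of part (1) (again $\lambda_\alpha\to\bar\lambda<0$) yields $u_\alpha\to\bar u$ strongly in $H^1(\R^N)$ with $\bar u\in S_c\cap\mathfrak{P}_0$, so $\liminf_{\alpha\to 0^+}\varsigma(c,\alpha)=J_0(\bar u)\ge m(c,0)$; hence $\varsigma(c,\alpha)\to m(c,0)$ and $u_{c,\alpha,m}\to u_0:=\bar u$ in $H^1(\R^N)$. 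The hard part throughout is the compactness in $\R^N$: ensuring the weak limit is nontrivial, that no $L^2$--mass escapes (the radial embedding recovers $L^t$ only for $2<t<2^*$, not $t=2$), and above all that the Lagrange multiplier is strictly negative; it is precisely the sign $\bar\lambda<0$, extracted from the Poho\v{z}aev identity using $\delta_q,\delta_p<1$ (i.e.\ $p<2_\mu^*$ strictly, Remark~\ref{rem1}), that converts weak into strong $H^1$--convergence despite the nonlocal coefficient $a+b\|\nabla u_n\|_2^{2(\theta-1)}$, and the same sign governs the $\alpha\to 0^+$ limit.
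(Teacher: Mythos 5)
Your proposal is correct and follows essentially the same route as the paper: the fiber-map analysis giving $\mathfrak{P}_{\alpha}=\mathfrak{P}_{\alpha}^{-}$ and the positive minimax level (the paper's Lemmas \ref{sl1}--\ref{sl3}), a Jeanjean-type Palais--Smale sequence with $P_{\alpha}(u_n)\to 0$, compactness via the radial embedding and the strictly negative Lagrange multiplier (Lemma \ref{l1}), and the projection/monotonicity argument for the $\alpha\to 0^{+}$ asymptotics. The only detail worth adding is the standard Schwarz-rearrangement step identifying $\inf_{\mathfrak{P}_{\alpha}\cap S_{c,r}}J_{\alpha}$ with $\inf_{\mathfrak{P}_{\alpha}}J_{\alpha}$, so that the critical point is a ground state on all of $S_c$ and not merely on $S_{c,r}$.
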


\begin{theorem}\label{t4}
Let $N=3$, $\theta=2$, $\mu=2$,  $\dfrac{10}{3}<q<p=4$ and $\alpha>0$. If $\frac{a^{2}}{4}-\frac{b^{3}S_{HL}^{4}}{27}>0$ then  $J_{\alpha}|_{S_{c}}$ has a critical point via mountain pass theorem, say, $u_{c,\alpha,m}$ such that $$J_{\alpha}(u_{c,\alpha,m})=\varsigma(c,\alpha)\in \left(0,\left( \frac{b \Lambda^{2} S_{HL}^{2} }{8}+\frac{3a\Lambda S_{HL}}{8}\right) \right)$$
 where
 \begin{equation*}
 	\Lambda=\left(\frac{aS_{HL}}{2}+\sqrt{\frac{a^{2}S_{HL}^{2}}{4}-\frac{b^{3}S_{HL}^{6}}{27}}\right)^{\frac{1}{3}}+\left(\frac{aS_{HL}}{2}-\sqrt{\frac{a^{2}S_{HL}^{2}}{4}-\frac{b^{3}S_{HL}^{6}}{27}}\right)^{\frac{1}{3}}. 
 	\end{equation*} Also,  $u_{c,\alpha,m}$ is a positive radial solution to \eqref{1.1} for some $\lambda_{c,\alpha,m}<0$. Moreover, $u_{c,\alpha,m}\in S_{c}$ is a ground state solution for $J_{\alpha}|_{S_{c}}$. 
\end{theorem}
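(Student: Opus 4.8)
\emph{Outline of the proof.} The plan is to produce $u_{c,\alpha,m}$ as a mountain--pass critical point of $J_{\alpha}$ on the radial sphere $S_{c,r}$, with $\Theta^{*}:=\tfrac{b\Lambda^{2}S_{HL}^{2}}{8}+\tfrac{3a\Lambda S_{HL}}{8}$ serving as a Brezis--Nirenberg type compactness threshold. A preliminary observation is that the hypothesis $\tfrac{a^{2}}{4}>\tfrac{b^{3}S_{HL}^{4}}{27}$ is exactly what makes the discriminant of $t^{3}-bS_{HL}^{2}t-aS_{HL}$ negative, so this cubic has a unique real root, which is positive and is identified by Cardano's formula with $\Lambda$; equivalently, $t_{*}=S_{HL}\Lambda$ is the unique maximiser on $(0,\infty)$ of $h(t)=\tfrac{a}{2}t+\tfrac{b}{4}t^{2}-\tfrac{t^{4}}{8S_{HL}^{4}}$, and $\max_{t>0}h(t)=\Theta^{*}$. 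After the dilation $t=\|\na w\|_{2}^{2}$, $h$ is the energy of the limiting critical equation $-(a+b\|\na w\|_{2}^{2})\Delta w=(I_{\mu}\ast|w|^{4})|w|^{2}w$ on a Choquard--Sobolev extremal for $S_{HL}$; $\Theta^{*}$ is therefore the energy one must not reach.

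First I would fix the variational geometry. Since $\theta=2$, $p=2_{\mu}^{*}$ (hence $\delta_{p}=1$) and $q>B^{*}=\tfrac{8}{3}$ forces $q\delta_{q}>\theta$, each fibre map $E_{u}(s)=J_{\alpha}(s\star u)$ runs from $0^{+}$ at $-\infty$ to $-\infty$ at $+\infty$ and has a unique, nondegenerate maximum (the map $s\mapsto e^{-2s}E_{u}'(s)$ is first increasing, then decreasing). Hence $\mathfrak{P}_{\alpha}=\mathfrak{P}_{\alpha}^{-}$, $u\mapsto s_{u}\star u$ retracts $S_{c}$ continuously onto $\mathfrak{P}_{\alpha}$, the constraint $P_{\alpha}(u)=0$ forces $\|\na u\|_{2}\ge\rho>0$, and the identity $J_{\alpha}(u)-\tfrac{1}{2q\delta_{q}}P_{\alpha}(u)=\tfrac{a(q\delta_{q}-1)}{2q\delta_{q}}\|\na u\|_{2}^{2}+\tfrac{b(q\delta_{q}-2)}{4q\delta_{q}}\|\na u\|_{2}^{4}+\tfrac{4-q\delta_{q}}{8q\delta_{q}}\int_{\R^{3}}(I_{\mu}\ast|u|^{4})|u|^{4}$, whose coefficients are all positive, gives $J_{\alpha}>0$ on $\mathfrak{P}_{\alpha}$. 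Thus $\varsigma(c,\alpha):=\inf_{u\in S_{c,r}}\max_{s\in\R}J_{\alpha}(s\star u)=\inf_{\mathfrak{P}_{\alpha}\cap S_{c,r}}J_{\alpha}$ is a positive mountain--pass value; applying the mountain--pass theorem to $(u,s)\mapsto J_{\alpha}(s\star u)$ on $S_{c,r}\times\R$ (Jeanjean's device) yields $\{u_{n}\}\subset S_{c,r}$ with $J_{\alpha}(u_{n})\to\varsigma(c,\alpha)$, $J_{\alpha}'(u_{n})-\lambda_{n}u_{n}\to0$ in $H^{-1}$, $P_{\alpha}(u_{n})\to0$. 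The same identity bounds $\{u_{n}\}$ in $H^{1}_{rad}(\R^{3})$; up to a subsequence $\lambda_{n}\to\lambda$, $u_{n}\rightharpoonup u$, and since $q<2_{\mu}^{*}$ the subcritical Choquard term behaves weakly continuously on $H^{1}_{rad}$ (Proposition~\ref{pro1}), so passing to the limit in $\lambda_{n}c^{2}=J_{\alpha}'(u_{n})(u_{n})-P_{\alpha}(u_{n})+o(1)$ gives $\lambda c^{2}=-\alpha(1-\delta_{q})\int_{\R^{3}}(I_{\mu}\ast|u|^{q})|u|^{q}\le0$.

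The hard part is the strict estimate $\varsigma(c,\alpha)<\Theta^{*}$. I would test with $u_{\e}=cW_{\e}/\|W_{\e}\|_{2}$, where $W_{\e}$ is the unit--scale truncation of the $\e$--rescaled Choquard--Sobolev extremal for $S_{HL}$, and evaluate $\max_{s}J_{\alpha}(s\star u_{\e})$. Rewriting it, via $\tau=e^{2s}\|\na u_{\e}\|_{2}^{2}$, as $\max_{\tau>0}\big(\tfrac{a}{2}\tau+\tfrac{b}{4}\tau^{2}-\tfrac{\tau^{4}}{8S_{HL,\e}^{4}}-c_{\e}\tau^{q\delta_{q}}\big)$ with $S_{HL,\e}=S_{HL}+O(\e^{N-2})$ and $c_{\e}$ comparable to $\alpha\,\e^{(4-q)/2}$ (the exponent coming from the $N=3$, $\mu=2$ asymptotics of $\int_{\R^{3}}(I_{\mu}\ast|W_{\e}|^{q})|W_{\e}|^{q}$ and $\|W_{\e}\|_{2}^{2}$), and using $h(\tau)\le\Theta^{*}$, one readily obtains $\max_{s}J_{\alpha}(s\star u_{\e})\le\Theta^{*}+O(\e^{N-2})-c\,\e^{(4-q)/2}$ for some $c>0$. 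Because $q>2$, $(4-q)/2<1=N-2$, so the negative term dominates and $\max_{s}J_{\alpha}(s\star u_{\e})<\Theta^{*}$ for $\e$ small. This is precisely where the $L^{2}$--supercriticality of $q$, the sign of the subcritical term, and the restriction to $N=3$, $\mu=2$ are essential; for general $(N,\mu)$ the analogous step would require an explicit positive root of a degree--$(2_{\mu}^{*}-1)$ equation, which is why only the cubic case is treated.

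Finally, compactness and the remaining properties. Since $p=2_{\mu}^{*}$ and $\delta_{p}=1$, the HLS--critical term cancels in $J_{\alpha}-\tfrac{1}{8}P_{\alpha}$, so with $L=\lim\|\na u_{n}\|_{2}^{2}$ one obtains $\varsigma(c,\alpha)=\tfrac{3a}{8}L+\tfrac{b}{8}L^{2}-\tfrac{4-q\delta_{q}}{8q}\alpha\int_{\R^{3}}(I_{\mu}\ast|u|^{q})|u|^{q}$. Passing to the limit in the equation (using $\|\na u_{n}\|_{2}^{2}\to L$, the weak continuity of the $q$--term, and the Brezis--Lieb splitting of the critical term, Propositions~\ref{ppro1} and~\ref{pro1}) shows $u$ solves the frozen equation $-(a+bL)\Delta u=\lambda u+\alpha(I_{\mu}\ast|u|^{q})|u|^{q-2}u+(I_{\mu}\ast|u|^{4})|u|^{2}u$ on $H^{1}(\R^{3})$, whose Poho\v{z}aev identity gives $\delta_{q}\alpha\int_{\R^{3}}(I_{\mu}\ast|u|^{q})|u|^{q}\le(a+bL)\|\na u\|_{2}^{2}$. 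If the lost gradient $\ell:=L-\|\na u\|_{2}^{2}$ were positive, then setting $v_{n}=u_{n}-u$ and testing $J_{\alpha}'(u_{n})-\lambda_{n}u_{n}\to0$ against $v_{n}$ gives $(a+bL)\ell\le\lim\int_{\R^{3}}(I_{\mu}\ast|v_{n}|^{4})|v_{n}|^{4}\le S_{HL}^{-4}\ell^{4}$, hence $\ell\ge S_{HL}\Lambda$ and $\|\na u\|_{2}^{2}\le L-S_{HL}\Lambda$; feeding the last two inequalities into the displayed formula for $\varsigma(c,\alpha)$ and using $q\delta_{q}\in(3,4)$, the elementary factorisation of $G(L)-\Theta^{*}$ with $G(L)=\tfrac{3a}{8}L+\tfrac{b}{8}L^{2}-\tfrac{4-q\delta_{q}}{8q\delta_{q}}(a+bL)(L-S_{HL}\Lambda)$ yields $\varsigma(c,\alpha)\ge G(L)\ge\Theta^{*}$, contradicting the estimate just proved. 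Hence $\ell=0$, so $u_{n}\to u$ in $D^{1,2}(\R^{3})$; then $u\ne0$ and $\lambda<0$, and testing the equation against $u_{n}-u$ (with $\lambda<0$) upgrades convergence to $H^{1}(\R^{3})$, so $u\in S_{c,r}$, $J_{\alpha}(u)=\varsigma(c,\alpha)\in(0,\Theta^{*})$, and $u$ solves \eqref{1.1} with $\lambda_{c,\alpha,m}=\lambda<0$. Replacing $u$ by $|u|$ and then by its Schwarz rearrangement (which does not raise $\max_{s}J_{\alpha}(s\star\cdot)$, hence still realises $\varsigma(c,\alpha)$), the strong maximum principle makes $u$ positive and radially decreasing; and since every critical point of $J_{\alpha}|_{S_{c}}$ lies on $\mathfrak{P}_{\alpha}$ while the same rearrangement gives $\varsigma(c,\alpha)=\inf_{\mathfrak{P}_{\alpha}}J_{\alpha}$, $u=u_{c,\alpha,m}$ is a ground state.
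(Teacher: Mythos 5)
Your proposal is correct and follows essentially the same route as the paper: the mountain--pass geometry on $\mathfrak{P}_{\alpha}=\mathfrak{P}_{\alpha}^{-}$, the Cardano/cubic identification of the threshold $\frac{b\Lambda^{2}S_{HL}^{2}}{8}+\frac{3a\Lambda S_{HL}}{8}$ together with the truncated--extremal estimate $\varsigma(c,\alpha)<\frac{b\Lambda^{2}S_{HL}^{2}}{8}+\frac{3a\Lambda S_{HL}}{8}$ (the paper's Lemma \ref{l5.5}), and compactness below that level via the implication $s^{3}-bS_{HL}^{2}s-aS_{HL}\geq 0\Rightarrow s\geq\Lambda$ (the paper's Lemma \ref{l5.4}), which you simply unpack in line instead of quoting as a dichotomy lemma. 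The only bookkeeping difference is in the test-function step: the paper tracks the positive error as $O(\varepsilon^{1/3})$, so dominance of the negative term $\varepsilon^{(4-q)/2}$ requires $(4-q)/2<1/3$, i.e.\ exactly the hypothesis $q>10/3$, whereas you assert that $q>2$ suffices; this is harmless here since $q>10/3$ is assumed and your own compactness step already uses $q\delta_{q}>3$.
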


The proofs of Theroems \ref{t1}-\ref{t4} is inspired from \cite{soave2020normalized1,soave2020normalized2}. 
 A standard methodology to obtain solutions using variational techniques consists of looking for minimizers of the functional.  For Theorem  \ref{t1},  the ground state solution is obtained by finding the local minimizers of the minimization problem $$m(c,\al)= \inf_{u \in \Upsilon_r}J_\al(u)=  \inf_{u \in  P_{\alpha}^{+}}J_\al(u)= \inf_{u \in  P_{\alpha}}J_\al(u)<0.  $$
 For the mountain pass type solution, we utilize the definition of $m(c,\al)$ and min-max principle \cite[Theorem 5.2]{ghoussoub1993duality}. While for Theorem \ref{t2}, we cover the loss of compactness by employing the fact that  $m(c,\al)<0$  for $\al<\al_3$. 
 
Now, the $L^2$-supercritical case is a bit involving and challenging.  Establishing the boundedness of the Palais-Smale sequence for the functionals $J_\al$ is a tedious job. For this, we employ the Jeanjean technique, we prove the existence of a bounded Palais-Smale sequence with $P_\al(u_n) \ra 0 $ as $n \ra \infty$. However,  for any sequence $u_n\rightharpoonup u$ in $H^1(\R^N)$ then we cannot conclude 
$$ \lim_{n\ra \infty} \|\nabla u_n\|_{2}^{2(\theta-1)} \int_{\R^{N}} \nabla u_n \nabla v=  \|\nabla u\|_{2}^{2(\theta-1)}\int_{\R^{N}} \nabla u  \nabla v,  ~\text{ for all } v \in H^1(\R^N). $$ 
To overcome this obstacle, we define $D=\lim\limits_{n\rightarrow\infty} \|\nabla u_{n}\|^{2(\theta-1)}_{2}$ and prove that $D>0$. Now   convergence follows for the subcritical case ($B^{*}<q<p<2_{\mu}^{*}$) but for the critical case i.e,   $B^{*}<q<2_{\mu}^{*}$ and $p=2_{\mu}^{*}$,  this tactic  won't work. We have to find an explicit positive solution to the following algebraic equation: 
$$x^{2^*_{\mu}-1}- bS_{HL}^{\theta} x^{\theta-1} - aS_{HL} =0 $$
By analyzing the equation, we can ensure the existence of a positive solution of equation but to the best of our knowledge, finding an explicit solution is an open problem. This paper tries to partially solve this problem in $N=3$ case. Precisely, by using Cardano's formula, we find the exact solution to the above algebraic equation for $N=3$, $\theta=2$, $\mu=2$, and $\frac{a^{2}}{4}-\frac{b^{3}S_{HL}^{4}}{27}>0$. Utilizing this, we prove the existence of a ground state solution for the case $\dfrac{10}{3}<q<p=4$. To the best of our knowledge, till now, there has been no attempt to investigate the existence and multiplicity for \eqref{1.1} for any $\theta$ and $N$.


\begin{rem}
Theorem \ref{t4} continues to hold when $N=3$, $\theta=3$, $\mu=1$,  $\dfrac{9}{2}<q<p=5$ and $\alpha>0$, i.e. $J_{\alpha}|_{S_{c}}$ has a critical point via mountain pass theorem, say, $u_{c,\alpha,m}$ such that $J_{\alpha}(u_{c,\alpha,m})=\varsigma(c,\alpha)\in \left(0,\left( \frac{b \Lambda^{3} S_{HL}^{3}}{15}+\frac{2a\Lambda S_{HL}}{5}\right) \right)$
 where
$$\Lambda=\sqrt{\dfrac{bS_{HL}^{3}+\sqrt{b^2S_{HL}^{6}+4aS_{HL}}}{2}}.$$ Also,  $u_{c,\alpha,m}$ is a positive radial solution to \eqref{1.1} for some $\lambda_{c,\alpha,m}<0$. Moreover, $u_{c,\alpha,m}\in S_{c}$ is a ground state solution for $J_{\alpha}|_{S_{c}}$.
\end{rem}

We organize the rest of the paper as 
 follows: Section 3 studies the boundedness and compactness of the Palais-Smale sequence, while Section 4 is devoted to the technical results involving the geometry of the functional. Moreover, we prove the   Theorems \ref{t1} and \ref{t2}. Finally, in Section 5, we consider the $L^2-$supercritical case and prove the  Theorems \ref{t3} and \ref{t4}.

\section{Compactness  of Palais-Smale sequences}
In this section, we prove that $J_{\alpha}|_{S_{c}}$ satisfies the Palais-Smale condition. To prove the desired result, we took the help of a perturbed Pohozaev constraint. 


\begin{lemma}\label{l1}
Let $2_{\mu,*}<q<B^{*}<p<2_{\mu}^{*}$ or $B^{*}<q<p<2_{\mu}^{*}$. Let $\{u_n\}\subseteq S_{c,r}$ be a Palais Smale sequence for the functional $J_{\alpha}|_{S_{c}}$ of level $l\neq 0$ such that $P_{\alpha}(u_n)\rightarrow 0$ as $n\rightarrow\infty$. Then $u_{n}\rightarrow u$ strongly (upto a subsequence) in $H^{1}(\R^{N})$. Moreover, $u\in S_{c}$ and $u$ is the radial solution of \eqref{1.1} for some $\lambda<0.$
\end{lemma}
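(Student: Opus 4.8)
The plan is to run the by-now standard three-step scheme for Palais--Smale sequences under a perturbed Pohozaev constraint, with the one genuinely new difficulty being the Kirchhoff nonlocality $b\|\na u\|_2^{2(\theta-1)}$.

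\textbf{Boundedness.} I would first show $\{u_n\}$ is bounded in $H^1(\R^N)$. Combining $J_{\alpha}(u_n)=l+o(1)$ with $P_{\alpha}(u_n)=o(1)$, I form the linear combination $J_{\alpha}(u_n)-\tfrac{1}{2p\delta_p}P_{\alpha}(u_n)$ in the case $2_{\mu,*}<q<B^{*}$, and $J_{\alpha}(u_n)-\tfrac{1}{2q\delta_q}P_{\alpha}(u_n)$ in the case $B^{*}<q$. In both cases the coefficients of $\|\na u_n\|_2^{2}$ and $\|\na u_n\|_2^{2\theta}$ stay strictly positive because $1<\theta<p\delta_p$, and in the second case also $\theta<q\delta_q<p\delta_p$ (Remark~\ref{rem1}); moreover in the first case the surviving Choquard $q$-term has the favourable sign and is in any event dominated via the Gagliardo--Nirenberg inequality \eqref{GN1}, $\int_{\R^{N}}(I_{\mu}\ast|u_n|^{q})|u_n|^{q}\le C_q c^{2q(1-\delta_q)}\|\na u_n\|_2^{2q\delta_q}$ with $2q\delta_q<2\theta$, while in the second case the $q$-term is killed outright and the $p$-term has the favourable sign. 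Hence the combination is coercive in $\|\na u_n\|_2$, and boundedness of $\{u_n\}$ in $H^1(\R^N)$ follows.

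\textbf{Weak limit and the nonlocal terms.} Passing to a subsequence, $u_n\rightharpoonup u$ in $H^1_{rad}(\R^N)$, and by the compact embedding $H^1_{rad}(\R^N)\hookrightarrow L^{s}(\R^N)$ for $2<s<2^{*}$ we get $u_n\to u$ in $L^{s}$. Since $2_{\mu,*}<q<p<2_{\mu}^{*}$, the Hardy--Littlewood--Sobolev exponents $\tfrac{2Nq}{2N-\mu}$ and $\tfrac{2Np}{2N-\mu}$ lie strictly inside $(2,2^{*})$, so HLS yields $\int_{\R^{N}}(I_{\mu}\ast|u_n|^{q})|u_n|^{q}\to\int_{\R^{N}}(I_{\mu}\ast|u|^{q})|u|^{q}$ and the analogous convergence for $p$ --- this is precisely where $p<2_{\mu}^{*}$ is used, whereas $p=2_{\mu}^{*}$ would cost compactness. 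Testing the approximate Euler--Lagrange equation with $v=u_n$ and using the bound from Step~1 shows the Lagrange multipliers $\lambda_n$ are bounded, so $\lambda_n\to\lambda$ along a subsequence. I set $A:=\lim_n\|\na u_n\|_2^{2}$ (extractable by boundedness), note $A\ge\|\na u\|_2^{2}$ by weak lower semicontinuity, and record $\|\na u_n\|_2^{2(\theta-1)}\to A^{\theta-1}$; crucially I cannot yet assert $A=\|\na u\|_2^{2}$, which is exactly the Kirchhoff obstruction flagged in the introduction.

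\textbf{$u\ne0$, passage to the limit, sign of $\lambda$, and closing the gap.} If $u\equiv0$, the convergence of the nonlocal terms forces both Choquard integrals to tend to $0$, and then $P_{\alpha}(u_n)\to0$ gives $a\|\na u_n\|_2^{2}+b\|\na u_n\|_2^{2\theta}\to0$, so $\|\na u_n\|_2\to0$ and $J_{\alpha}(u_n)\to0$, contradicting $l\ne0$; hence $u\ne0$. Letting $n\to\infty$ in the approximate equation, using Proposition~\ref{pro1} for the Choquard terms together with $\|\na u_n\|_2^{2(\theta-1)}\to A^{\theta-1}$, shows $u$ solves $-(a+bA^{\theta-1})\De u=\lambda u+\alpha(I_{\mu}\ast|u|^{q})|u|^{q-2}u+(I_{\mu}\ast|u|^{p})|u|^{p-2}u$. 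Comparing $P_{\alpha}(u_n)\to0$ with the identity from testing the $u_n$-equation against $u_n$ gives, in the limit, $-\lambda c^{2}=\alpha(1-\delta_q)\int_{\R^{N}}(I_{\mu}\ast|u|^{q})|u|^{q}+(1-\delta_p)\int_{\R^{N}}(I_{\mu}\ast|u|^{p})|u|^{p}$; since $\delta_q,\delta_p<1$ (Remark~\ref{rem1}) and $u\ne0$, the right-hand side is strictly positive, so $\lambda<0$. Finally, testing the $u_n$-equation against $u_n$ and the limit equation against $u$ and subtracting yields $(a+bA^{\theta-1})(A-\|\na u\|_2^{2})=\lambda(c^{2}-\|u\|_2^{2})$; the left side is $\ge0$ and the right side is $\le0$, forcing both to vanish, i.e. $A=\|\na u\|_2^{2}$ and $\|u\|_2=c$. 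Therefore $u\in S_c$, $\|\na u_n\|_2\to\|\na u\|_2$ and $\|u_n\|_2\to\|u\|_2$ combine with weak convergence to give $u_n\to u$ strongly in $H^1(\R^N)$, and since $A=\|\na u\|_2^{2}$ the limiting equation is exactly \eqref{1.1} with $\lambda<0$.

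I expect the main obstacle to be the last step: because of the Kirchhoff term one only obtains the equation with the frozen coefficient $a+bA^{\theta-1}$, and identifying $A$ with $\|\na u\|_2^{2}$ genuinely requires the strict sign $\lambda<0$ (which in turn relies on $u\ne0$, hence on $l\ne0$, and on $\delta_q,\delta_p<1$). The boundedness step is also delicate since it hinges on the precise ordering $q\delta_q<\theta<p\delta_p$ in the first regime and $\theta<q\delta_q<p\delta_p$ in the second; the rest (weak convergence, compactness of the nonlocal terms in the radial space, bounded multipliers) is routine.
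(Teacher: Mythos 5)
Your proposal is correct, and its overall architecture coincides with the paper's: boundedness from a linear combination of $J_{\alpha}$ and $P_{\alpha}$, compact radial embedding plus HLS for the Choquard terms, bounded Lagrange multipliers, the identity $\lambda_n c^2=\alpha(\delta_q-1)\int(I_\mu\ast|u_n|^q)|u_n|^q+(\delta_p-1)\int(I_\mu\ast|u_n|^p)|u_n|^p+o_n(1)$ combined with $l\neq 0$ to force $\lambda<0$ and $u\neq 0$, and then $\lambda<0$ to defeat the Kirchhoff obstruction. Two remarks on where you diverge in execution. First, in the boundedness step for $2_{\mu,*}<q<B^{*}$ the surviving $q$-term in $J_{\alpha}-\tfrac{1}{2p\delta_p}P_{\alpha}$ actually carries the \emph{unfavourable} sign (its coefficient is $-\tfrac{\alpha}{2q}\bigl(1-\tfrac{q\delta_q}{p\delta_p}\bigr)<0$), contrary to your parenthetical claim; but since you immediately dominate it by Gagliardo--Nirenberg using $2q\delta_q<2\theta$, which is exactly what the paper does, the conclusion is unaffected. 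Second, and more substantively, your final step differs from the paper's: the paper subtracts the limiting equation (with frozen coefficient $a+bD$) from the approximate equation and tests with $v=u_n-u$, obtaining $\lim_n\bigl[(a+b\|\na u_n\|_2^{2(\theta-1)})\|\na(u_n-u)\|_2^2-\lambda\|u_n-u\|_2^2\bigr]=0$ and splitting this into two non-negative pieces via $\lambda<0$; you instead test each equation against its own solution, subtract to get $(a+bA^{\theta-1})(A-\|\na u\|_2^2)=\lambda(c^2-\|u\|_2^2)$, and use the opposite signs of the two sides to force $A=\|\na u\|_2^2$ and $\|u\|_2=c$ simultaneously, concluding by norm convergence plus weak convergence. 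Both mechanisms rest on the same ingredient ($\lambda<0$); yours has the mild advantage of exhibiting $u\in S_c$ (no loss of mass) explicitly before strong convergence is invoked, while the paper's yields strong $H^1$ convergence in one stroke. No gap.
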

\begin{proof}
\textbf{Boundedness of $\{u_n\}$:} Since $\{u_n\}$ be a Palais Smale sequence of level $l$ and $P_{\alpha}(u_n)\rightarrow 0$, we have
\begin{align}\label{l1.1}
\notag J_{\alpha}(u_{n})=\frac{a}{2}\|\nabla u_{n}\|^{2}_{2}&+\frac{b}{2\theta}\|\nabla u_{n}\|^{2\theta}_{2}-\frac{\alpha}{2q}\int_{\R^{N}}(I_{\mu}\ast|u_{n}|^{q})|u_{n}|^{q}\\
&-\frac{1}{2p}\int_{\R^{N}}(I_{\mu}\ast|u_{n}|^{p})|u_{n}|^{p}+o_{n}(1)=l
\end{align}
and 
\begin{align}\label{l1.2}
\notag P_{\alpha}(u_{n})=a\|\nabla u_{n}\|^{2}_{2}+b\|\nabla u_{n}\|^{2\theta}_{2}&-\delta_{q}\alpha\int_{\R^{N}}(I_{\mu}\ast|u_{n}|^{q})|u_{n}|^{q}\\
&-\delta_{p}\int_{\R^{N}}(I_{\mu}\ast|u_{n}|^{p})|u_{n}|^{p}=o_{n}(1).
\end{align}
\textbf{Case I:} $2_{\mu,*}<q<B^{*}<p<2_{\mu}^{*}$\\
Using \eqref{l1.1} and \eqref{l1.2}, we obtain
\begin{align*}
\frac{a}{2}\|\nabla u_{n}\|^{2}_{2}&+\frac{b}{2\theta}\|\nabla u_{n}\|^{2\theta}_{2}-\frac{\alpha}{2q}\int_{\R^{N}}(I_{\mu}\ast|u_{n}|^{q})|u_{n}|^{q}\\
-\frac{1}{2p}&\left(\frac{a}{\delta_{p}}\|\nabla u_{n}\|^{2}_{2}+\frac{b}{\delta_{p}}\|\nabla u_{n}\|^{2\theta}_{2}-\frac{\alpha\delta_{q}}{\delta_{p}}\int_{\R^{N}}(I_{\mu}\ast|u_{n}|^{q})|u_{n}|^{q}\right)\leq l+1. 
\end{align*}
Now, employing Gagliardo-Nirenberg inequality \eqref{GN1}, we get

\begin{align*}
\notag \left(\frac{a}{2}- \frac{a}{2p\delta_{p}}\right)\|\nabla u_{n}\|^{2}+\left(\frac{b}{2\theta}- \frac{b}{2p\delta_{p}}\right)\|\nabla u_{n}\|^{2\theta}_{2}\leq(l+1)&+\frac{\alpha}{2q}\left(1-\frac{q\delta_{q}}{p\delta_{p}}\right)\\
&C_{q}\|\nabla u\|_{2}^{2q\delta_{q}}c^{2q(1-\delta_{q})},
\end{align*}
this gives,
$$\left(\frac{b}{2\theta}- \frac{b}{2p\delta_{p}}\right)\|\nabla u_{n}\|^{2\theta}_{2}\leq(l+1)+\frac{\alpha}{2q}\left(1-\frac{q\delta_{q}}{p\delta_{p}}\right)C_{q}\|\nabla u\|_{2}^{2q\delta_{q}}c^{2q(1-\delta_{q})}.$$
Hence, the conclusion follows from the fact that $q\delta_{q}<\theta$ and $p\delta_{p}>\theta>1$.

\textbf{Case II:} $B^{*}<q<p<2_{\mu}^{*}$ 

Again, by \eqref{l1.1}, \eqref{l1.2} and Remark \ref{rem1}, we have
\begin{align*}
\frac{a}{2}\|\nabla u_{n}\|^{2}_{2}&+\frac{1}{2\theta}\left(-a\|\nabla u_{n}\|^{2}_{2}+\delta_{q}\alpha\int_{\R^{N}}(I_{\mu}\ast|u_{n}|^{q})|u_{n}|^{q}+\delta_{p}\int_{\R^{N}}(I_{\mu}\ast|u_{n}|^{p})|u_{n}|^{p} \right) \\&-\frac{\alpha}{2q}\int_{\R^{N}}(I_{\mu}\ast|u_{n}|^{q})|u_{n}|^{q}-\frac{1}{2p}\int_{\R^{N}}(I_{\mu}\ast|u_{n}|^{p})|u_{n}|^{p}+o_{n}(1)\leq l+1,
\end{align*}
which implies
\begin{align*}
\left(\frac{a}{2}- \frac{a}{2\theta}\right)\|\nabla u_{n}\|^{2}_{2}&+\left(\frac{\delta_{p}}{2\theta}- \frac{1}{2p}\right)\int_{\R^{N}}(I_{\mu}\ast|u_{n}|^{p})|u_{n}|^{p}\\
&+\alpha\left(\frac{\delta_{q}}{2\theta}-\frac{1}{2q}\right)\int_{\R^{N}}(I_{\mu}\ast|u_{n}|^{q})|u_{n}|^{q}\leq l+1.
\end{align*}
It concludes,
$$ \left(\frac{a}{2}-\frac{a}{2\theta}\right)\|\nabla u_{n}\|^{2}_{2}\leq l+1,$$
that is, $\{u_{n}\}$ is bounded in $H^{1}(\R^{N})$.

\textbf{There exists a Lagrange multiplier $\boldsymbol{\lambda_{n}\rightarrow \lambda\in \R}$:} 
As $H^{1}_{rad}(\R^{N})\hookrightarrow L^{r}(\R^{N})$ compactly for all $r\in\left( 2,2^{*}\right)$, there exists $u\in H^{1}_{rad}(\R^{N})$ such that 
$u_{n}\rightharpoonup u$ in $H^{1}(\R^{N})$, $u_{n}\rightarrow u$ in $L^{r}(\R^{N})$ and $u_{n}\rightarrow u$ a.e. in $\R^{N}$.

As $\{u_n\}\subseteq S_{c,r}$ is a Palais Smale sequence for the functional $J_{\alpha}|_{S_{c}}$ then by the Lagrange multiplier rule, there exists $\lambda_{n}\in \R$ such that
\begin{align}\label{l1.3}  
\notag \left(a+b\|\nabla u_{n}\|_{2}^{2(\theta-1)}\right) \int_{\R^{N}} \nabla u_{n} \ \nabla v  & -\lambda_{n} \int_{\R^{N}} u_{n} v-\alpha \int_{\R^{N}}(I_{\mu}\ast|u_{n}|^{q})|u_{n}|^{q-2}u_{n} v\\
&-\int_{\R^{N}} (I_{\mu}\ast|u_{n}|^{p})|u_{n}|^{p-2}u_{n} v=o_{n}(1), \ \forall \ v\in H^{1}_{rad}(\R^{N}).
\end{align}
In particular, substituting $v=u_{n}$ in \eqref{l1.3}, we get
\begin{equation}\label{l1.4}
\lambda_{n}c^{2}=a\|\nabla u_{n}\|^{2}_{2}+b\|\nabla u_{n}\|^{2\theta}_{2}-\alpha\int_{\R^{N}}(I_{\mu}\ast|u_{n}|^{q})|u_{n}|^{q}-\int_{\R^{N}}(I_{\mu}\ast|u_{n}|^{p})|u_{n}|^{p}+o_{n}(1).
\end{equation}

Now, using the  fact that $\{u_{n}\}$ is bounded and employing \eqref{GN1}, there exists $M>0$ such that
$$|\lambda_{n}c^{2}|\leq a\|\nabla u_{n}\|^{2}_{2}+b\|\nabla u_{n}\|^{2\theta}_{2}+\alpha C_{q}\|\nabla u_n\|_{2}^{2q\delta_{q}}c^{2q(1-\delta_{q})} +C_{p}\|\nabla u_n\|_{2}^{2p\delta_{p}}c^{2p(1-\delta_{p})}+o_{n}(1)\leq M,$$
that is, $\lambda_{n}$ is a bounded sequence and  $\lambda_{n}\rightarrow \lambda\in \R,$
up to a subsequence.

$\boldsymbol{\lambda<0}$ \textbf{and} $\boldsymbol{u\neq 0:}$ 
Taking into account,  $P_{\alpha}(u_n)\rightarrow 0$, boundedness of $\{u_n\}$, \eqref{l1.4} and  \eqref{GN1}, one gets
\begin{align}\label{l1.5}
\lambda_{n}c^{2}&=\alpha(\delta_{q}-1)\int_{\R^{N}}(I_{\mu}\ast|u_{n}|^{q})|u_{n}|^{q}
+(\delta_{p}-1)\int_{\R^{N}}(I_{\mu}\ast|u_{n}|^{p})|u_{n}|^{p}+o_{n}(1)\\
\notag  &\leq \alpha(\delta_{q}-1)C_{q}\|\nabla u_{n}\|_{2}^{2q\delta_{q}}c^{2q(1-\delta_{q})}+(\delta_{p}-1)C_{p}\|\nabla u_{n}\|_{2}^{2p\delta_{p}}c^{2p(1-\delta_{p})}+o_{n}(1)\\
\notag &\leq \alpha(\delta_{q}-1)c_{1}c^{2q(1-\delta_{q})}+(\delta_{p}-1)c_{2}c^{2p(1-\delta_{p})}+o_{n}(1),
\end{align}
for some $c_{1},c_{2}>0.$
Taking $n\rightarrow \infty$ and using the fact $\delta_{p},\delta_{q}<1$, we obtain $\lambda c^{2}\leq 0$ which implies $\lambda\leq 0.$

If $\lambda=0$ then $\lambda_{n}\rightarrow 0$. Thus, by \eqref{l1.5}, one gets
$$\int_{\R^{N}}(I_{\mu}\ast|u_{n}|^{q})|u_{n}|^{q}\rightarrow 0 \text{ and } \int_{\R^{N}}(I_{\mu}\ast|u_{n}|^{p})|u_{n}|^{p}\rightarrow 0.$$

By the above and using the fact that $P_{\alpha}(u_n)\rightarrow 0$, we have $J_{\alpha}(u_{n})\rightarrow 0\neq l$, which is a contradiction. Hence, $\lambda_{n}\rightarrow \lambda< 0$ and $u\neq 0.$

\textbf{Conclusion:} Let $D=\lim\limits_{n\rightarrow\infty} \|\nabla u_{n}\|^{2(\theta-1)}_{2}\geq\|\nabla u\|^{2(\theta-1)}_{2}>0.$ Using Proposition \ref{pro1} in \eqref{l1.3}, we get
\begin{align}\label{l2.31}
\notag \left(a+bD\right) \int_{\R^{N}} \nabla u \ \nabla v  & -\lambda \int_{\R^{N}} u v -\alpha \int_{\R^{N}}(I_{\mu}\ast|u|^{q})|u|^{q-2}u v\\
&-\int_{\R^{N}} (I_{\mu}\ast|u|^{p})|u|^{p-2}u v=0, \ \forall \ v\in H^{1}(\R^{N}). 
\end{align}
Testing \eqref{l2.31}-\eqref{l1.3} with $v=u_{n}-u$, we obtain
\begin{align*}\label{l1.4}
	\left(a+b\|\nabla u_{n}\|_{2}^{2(\theta-1)}\right)& \int_{\R^{N}} |\nabla (u_{n}-u)|^{2}   -\lambda \int_{\R^{N}} | u_{n}-u|^{2}\\
	&-\alpha \int_{\R^{N}}[(I_{\mu}\ast|u_{n}|^{q})|u_{n}|^{q-2}u_{n}-(I_{\mu}\ast|u|^{q})|u|^{q-2}u] (u_{n}-u)\\
	&-\int_{\R^{N}} [(I_{\mu}\ast|u_{n}|^{p})|u_{n}|^{p-2}u_{n}-(I_{\mu}\ast|u|^{p})|u|^{p-2}u] (u_{n}-u)=o_{n}(1).
\end{align*}
Now, passing the limit $n\rightarrow\infty$ and using Holder's inequality together with the fact that $u_{n}\rightarrow u$ in $L^{r}(\R^{N})$ for each $2<r<2^{*}$. Hence, 
$$\lim_{n\rightarrow\infty}	\left(a+b\|\nabla u_{n}\|_{2}^{2(\theta-1)}\right)\int_{\R^{N}} |\nabla (u_{n}-u)|^{2} -\lambda\int_{\R^{N}} | u_{n}-u|^{2}=0,$$  
that is, $u_{n}\rightarrow u$ in $H^{1}(\R^{N}),$ which completes the proof.
\QED\end{proof}

\begin{lemma}\label{l2}
Let  $2_{\mu,*}<q<B^{*}<p=2_{\mu}^{*}$ or
 $B^{*}<q<p=2_{\mu}^{*}$. Let $\{u_n\}\subseteq S_{c,r}$ be a Palais Smale sequence for the functional $J_{\alpha}|_{S_{c}}$ of level $l\neq 0$ with 
 \begin{equation}\label{cl}
 l<\left( S_{HL}^{\theta+1}4ab\right) ^\frac{2_{\mu}^{*}}{22_{\mu}^{*}-(\theta+1)}\left(\frac{2_{\mu}^{*}-\theta}{22_{\mu}^{*} \theta} \right) 
 \end{equation}
and $P_{\alpha}(u_n)\rightarrow 0$ as $n\rightarrow\infty$. Then, one of the following is true:
\begin{enumerate}
\item[(1)] either $u_{n}\rightharpoonup u\neq 0$ weakly in $H^{1}(\R^{N})$ but not strongly then for some $\lambda<0$, $u$ is the solution of the following:
$$
-\left(a+bD\right) \Delta u =\lambda u+\alpha (I_{\mu}\ast|u|^{q})|u|^{q-2}u+(I_{\mu}\ast|u|^{2_{\mu}^{*}})|u|^{2_{\mu}^{*}-2}u \ \hbox{in} \  \R^{N} $$
and
$$l-\left( S_{HL}^{\theta+1}4ab\right) ^\frac{2_{\mu}^{*}}{22_{\mu}^{*}-(\theta+1)}\left( \frac{2_{\mu}^{*}-\theta}{22_{\mu}^{*}\theta}\right)\geq \tilde{J}_{\alpha}(u),$$ where $\tilde{J}_{\alpha}(u)=\left( \frac{a}{2}+\frac{Db}{2\theta}\right) \|\nabla u\|^{2}_{2}-\frac{\alpha}{2q}\int_{\R^{N}}(I_{\mu}\ast|u|^{q})|u|^{q}-\frac{1}{22_{\mu}^{*}}\int_{\R^{N}}(I_{\mu}\ast|u|^{2_{\mu}^{*}})|u|^{2_{\mu}^{*}}$ and $D=\lim\limits_{n\rightarrow\infty} \|\nabla u_{n}\|^{2(\theta-1)}_{2}$.
\item[(2)] or $u_{n}\rightarrow u$ strongly in $H^{1}(\R^{N}).$ Also, $J_{\alpha}(u)=l$ and $u$ solves \eqref{1.1} for some $\lambda<0$. 
 Moreover, $u\in S_{c}$ and $u$ is the radial solution of \eqref{1.1} for some $\lambda<0.$
 \end{enumerate}
\end{lemma}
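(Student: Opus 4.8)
\emph{Outline.} The proof will follow the pattern of Lemma~\ref{l1}, the new difficulty being that the HLS-critical Choquard term can destroy compactness, so a Brezis--Lieb splitting (Proposition~\ref{ppro1}) is required. First I would establish boundedness of $\{u_n\}$ in $H^{1}(\R^{N})$ by combining $J_\alpha(u_n)=l+o_n(1)$ with $P_\alpha(u_n)=o_n(1)$, using $\delta_{2_{\mu}^{*}}=1$ (Remark~\ref{rem1}). When $2_{\mu,*}<q<B^{*}<p=2_{\mu}^{*}$, the combination $J_\alpha(u_n)-\frac{1}{2p}P_\alpha(u_n)$ (note $2p=22_{\mu}^{*}$) cancels the critical Choquard term and leaves strictly positive coefficients on $\|\nabla u_n\|_{2}^{2}$ and $\|\nabla u_n\|_{2}^{2\theta}$; the term $\int_{\R^{N}}(I_\mu\ast|u_n|^{q})|u_n|^{q}$ is then controlled by \eqref{GN1} and absorbed since $q\delta_{q}<\theta$. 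When $B^{*}<q<p=2_{\mu}^{*}$, I would instead use $J_\alpha(u_n)-\frac{1}{2\theta}P_\alpha(u_n)$ together with $q\delta_{q}>\theta$ and $2_{\mu}^{*}>\theta$ to be left with only positive terms, so $\left(\frac{a}{2}-\frac{a}{2\theta}\right)\|\nabla u_n\|_{2}^{2}\le l+1$.

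Next, by the compact embedding $H^{1}_{rad}(\R^{N})\hookrightarrow L^{r}(\R^{N})$ for $2<r<2^{*}$, pass to a subsequence with $u_n\rightharpoonup u$ in $H^{1}(\R^{N})$, $u_n\to u$ in those $L^{r}$ and a.e.; the Lagrange multiplier rule gives $\lambda_n$ as in \eqref{l1.3}, and testing against $u_n$ together with \eqref{GN1} and $\int_{\R^{N}}(I_\mu\ast|u_n|^{2_{\mu}^{*}})|u_n|^{2_{\mu}^{*}}\le S_{HL}^{-2_{\mu}^{*}}\|\nabla u_n\|_{2}^{22_{\mu}^{*}}$ yields $\lambda_n\to\lambda\in\R$. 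Exactly as in Lemma~\ref{l1}, $P_\alpha(u_n)=o_n(1)$ and $\delta_{2_{\mu}^{*}}=1$ give $\lambda_n c^{2}=\alpha(\delta_{q}-1)\int_{\R^{N}}(I_\mu\ast|u_n|^{q})|u_n|^{q}+o_n(1)\le o_n(1)$, hence $\lambda\le0$; if $\lambda=0$ then $\int_{\R^{N}}(I_\mu\ast|u_n|^{q})|u_n|^{q}\to0$, and inserting this into $P_\alpha(u_n)\to0$, $J_\alpha(u_n)\to l$ and setting $A=\lim\|\nabla u_n\|_{2}^{2}$ one gets $aA+bA^{\theta}=\lim\int_{\R^{N}}(I_\mu\ast|u_n|^{2_{\mu}^{*}})|u_n|^{2_{\mu}^{*}}$ and $a\left(\tfrac12-\tfrac1{22_{\mu}^{*}}\right)A+b\left(\tfrac1{2\theta}-\tfrac1{22_{\mu}^{*}}\right)A^{\theta}=l$. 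For $A>0$ the Sobolev bound forces $a+bA^{\theta-1}\le S_{HL}^{-2_{\mu}^{*}}A^{2_{\mu}^{*}-1}$, and an AM--GM estimate ($a+bA^{\theta-1}\ge 2\sqrt{ab}\,A^{(\theta-1)/2}$) produces a lower bound for $A$ which, substituted into the formula for $l$, contradicts \eqref{cl} after simplification (here one uses $\theta\ge1$); $A=0$ forces $l=0$, also excluded. Hence $\lambda<0$, and similarly $u\neq0$ once compactness fails, for otherwise a pure bubble would push $l$ above the threshold in \eqref{cl}.

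Put $D=\lim\|\nabla u_n\|_{2}^{2(\theta-1)}\ge\|\nabla u\|_{2}^{2(\theta-1)}>0$. Passing to the limit in \eqref{l1.3} with $v$ fixed --- via Proposition~\ref{pro1} for the $q$-term and weak convergence of $(I_\mu\ast|u_n|^{2_{\mu}^{*}})|u_n|^{2_{\mu}^{*}-2}u_n$ (boundedness in the dual HLS space plus a.e.\ convergence) for the critical term --- shows that $u$ solves the $(a+bD)$-equation in alternative~(1). With $v_n=u_n-u\rightharpoonup0$, testing \eqref{l1.3} against $v_n$ and combining weak convergence (to split $\|\nabla u_n\|_{2}^{2}$ and $\int_{\R^{N}}u_n v_n$), vanishing of the subcritical $q$-term, and Proposition~\ref{ppro1} for the critical term gives, with $\ell^{2}=\lim\|\nabla v_n\|_{2}^{2}$ and $\sigma^{2}=c^{2}-\|u\|_{2}^{2}$,
\[
(a+bD)\ell^{2}-\lambda\sigma^{2}-\lim_{n\to\infty}\int_{\R^{N}}(I_\mu\ast|v_n|^{2_{\mu}^{*}})|v_n|^{2_{\mu}^{*}}=0.
\]
If $\ell=0$, then since $\lambda<0$ this forces $\sigma=0$, so $u_n\to u$ in $H^{1}(\R^{N})$, $u\in S_c$, $J_\alpha(u)=l$, and $u$ solves \eqref{1.1} --- alternative~(2). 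If $\ell>0$, then $u\neq0$, and expanding $J_\alpha(u_n)\to l$ by Brezis--Lieb gives $l-\tilde{J}_{\alpha}(u)=\frac{a}{2}\ell^{2}+\frac{bD}{2\theta}\ell^{2}-\frac{1}{22_{\mu}^{*}}\lim\int_{\R^{N}}(I_\mu\ast|v_n|^{2_{\mu}^{*}})|v_n|^{2_{\mu}^{*}}$; using the Sobolev bound for $v_n$, $D\ge\ell^{2(\theta-1)}$, $\lambda\sigma^{2}\le0$, and AM--GM on $a+bD$ to produce the bubble-mass lower bound $\ell^{2}\ge\left(2\sqrt{ab}\,S_{HL}^{2_{\mu}^{*}}\right)^{\frac{2}{22_{\mu}^{*}-\theta-1}}$, one checks (again using $\theta\ge1$) that $l-\tilde{J}_{\alpha}(u)$ is at least the right-hand side of \eqref{cl} --- alternative~(1).

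The crux is this last quantitative estimate: one must track simultaneously the compactness defect $\ell^{2}$, the $L^{2}$-tail $\sigma^{2}$ and the sign of $\lambda$, and carry out the Sobolev/AM--GM bookkeeping so that precisely the threshold of \eqref{cl} is recovered. Secondary technical points are checking that $D=\lim\|\nabla u_n\|_{2}^{2(\theta-1)}$ is well defined and strictly positive (so that the limit equation with coefficient $a+bD$ is meaningful), the weak convergence of the critical Choquard nonlinearity, and the use of \eqref{cl} to rule out $\lambda=0$ and the pure-bubble case $u=0$.
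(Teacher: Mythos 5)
Your proposal is correct and follows essentially the same route as the paper: boundedness via the combinations $J_\alpha-\tfrac{1}{2p}P_\alpha$ (resp.\ $J_\alpha-\tfrac{1}{2\theta}P_\alpha$), the sign of $\lambda$ via the threshold \eqref{cl} together with the AM--GM/Sobolev lower bound $m\ge\left(4abS_{HL}^{\theta+1}\right)^{\frac{2_{\mu}^{*}}{22_{\mu}^{*}-(\theta+1)}}$, the limit equation with coefficient $a+bD$, and the Brezis--Lieb dichotomy on the bubble mass of $v_n=u_n-u$. The only (harmless) variation is that you obtain the identity for $v_n$ by testing the equations against $v_n$ (picking up the extra term $-\lambda\sigma^{2}\ge 0$, which you then use to settle the $\ell=0$ case), whereas the paper derives the same relation from $P_\alpha(u_n)\to0$ and the Poho\v{z}aev identity $Q_\alpha(u)=0$; both yield the inequality needed for the AM--GM step.
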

\begin{proof}
Boundedness of $\{u_n\}$ and existence of Lagrange multipliers $\lambda_{n}$ such that $\lambda_{n}\rightarrow \lambda$ with $\lambda\leq0$ follows exactly the same as the proof of Lemma \ref{l1}. 

Next, we will prove that $\lambda<0$ and $u\neq 0$. Using Remark \ref{rem1}, \eqref{l1.5}, the Gagliardo-Nirenberg inequality \eqref{GN1}  and boundedness of $\{u_n\}$, we have 
\begin{align}\label{l1.61}
\lambda_{n}c^{2}&=\alpha(\delta_{q}-1)\int_{\R^{N}}(I_{\mu}\ast|u_{n}|^{q})|u_{n}|^{q}
+o_{n}(1)\\
\notag &\leq \alpha(\delta_{q}-1)C_{q}\|\nabla u_{n}\|_{2}^{2q\delta_{q}}c^{2q(1-\delta_{q})}+o_{n}(1)\\
\notag &\leq \alpha(\delta_{q}-1)c_{3}c^{2q(1-\delta_{q})}+o_{n}(1),
\end{align}
for some $c_{3}>0.$
Therefore, passing the limit $n\rightarrow \infty$ and using the fact $\delta_{q}<1$, we have $\lambda c^{2}\leq 0$ which implies $\lambda\leq 0.$

\textbf{Claim:} $\lambda\neq 0.$ Let on the contrary, $\lambda=0$ then by \eqref{l1.61}, we get
\begin{equation}\label{l1.101}
\lim\limits_{n\rightarrow\infty}\int_{\R^{N}}(I_{\mu}\ast|u_{n}|^{q})|u_{n}|^{q}=0.
\end{equation}
Using the above information with $P_{\alpha}(u_n)\rightarrow 0$, we obtain
\begin{equation}\label{l1.71}
\lim\limits_{n\rightarrow\infty}\left( a\|\nabla u_{n}\|^{2}_{2}+b\|\nabla u_{n}\|^{2\theta}_{2}\right) =\lim\limits_{n\rightarrow\infty}\int_{\R^{N}}(I_{\mu}\ast|u_{n}|^{2_{\mu}^{*}})|u_{n}|^{2_{\mu}^{*}}.
\end{equation}
Let $\lim\limits_{n\rightarrow\infty} \|\nabla u_{n}\|^{2}_{2}=t$ and $\lim\limits_{n\rightarrow\infty}\int_{\R^{N}}(I_{\mu}\ast|u_{n}|^{2_{\mu}^{*}})|u_{n}|^{2_{\mu}^{*}}=m,$ thus form \eqref{l1.71}, we have
\begin{equation}\label{l1.79}
at+bt^{\theta}=m.
\end{equation}
This subsequently gives
$2(abt^{\theta+1})^{1/2}\leq m$, i.e.,
$\lim\limits_{n\rightarrow\infty}\|\nabla u_{n}\|^{2{(\theta+1)}}_{2}\leq \frac{m^2}{4ab}$. By using \eqref{a2}, we get
$S_{HL}^{(\theta+1)}\left( \int_{\R^{N}}(I_{\mu}\ast|u_{n}|^{2_{\mu}^{*}})|u_{n}|^{2_{\mu}^{*}}\right)^\frac{(\theta+1)}{2_{\mu}^{*}}\leq \frac{m^2}{4ab}.$ It implies $S_{HL}^{(\theta+1)}m^\frac{(\theta+1)}{2_{\mu}^{*}}\leq \frac{m^2}{4ab},$ that is
\begin{equation}\label{l1.111}
m\geq\left( S_{HL}^{\theta+1}4ab\right) ^\frac{2_{\mu}^{*}}{22_{\mu}^{*}-(\theta+1)} \text{ and } \lim\limits_{n\rightarrow\infty} \|\nabla u_{n}\|^{2}_{2}\leq \left(\frac{m}{b} \right)^{\frac{1}{\theta}}. \end{equation}
%
%
As $\{u_n\}$ is a Palais-Smale sequence for the functional $J_{\alpha}$ of level $l$, using \eqref{l1.79} and \eqref{l1.111}, we obtain
\begin{align*}
l=\lim\limits_{n\rightarrow\infty}J_{\alpha}(u_{n})&=\lim\limits_{n\rightarrow\infty}\left( \frac{a}{2}\|\nabla u_{n}\|^{2}_{2}+\frac{b}{2\theta}\|\nabla u_{n}\|^{2\theta}_{2} -\frac{1}{22_{\mu}^{*}}\int_{\R^{N}}(I_{\mu}\ast|u_{n}|^{2_{\mu}^{*}})|u_{n}|^{2_{\mu}^{*}}\right)\\
&\geq\lim\limits_{n\rightarrow\infty}\left( \frac{m}{22_{\mu}^{*}}(2_{\mu}^{*}-1)-\left( \frac{b}{2}\right) \left( \frac{\theta-1}{\theta}\right)  \|\nabla u_{n}\|^{2\theta}_{2}\right)\\
&\geq\left( \frac{m}{22_{\mu}^{*}}(2_{\mu}^{*}-1)-\left( \frac{m}{2}\right)\left(  \frac{\theta-1}{\theta}\right)  \right)\\
&\geq\frac{m}{2}\left( \frac{2_{\mu}^{*}-\theta}{2_{\mu}^{*}\theta}\right)\\
&\geq\left( S_{HL}^{\theta+1}4ab\right) ^\frac{2_{\mu}^{*}}{22_{\mu}^{*}-(\theta+1)}\left( \frac{2_{\mu}^{*}-\theta}{22_{\mu}^{*}\theta}\right), 
\end{align*}
which is a contradiction to \eqref{cl}. Hence, our supposition is wrong, therefore $\lambda<0$ and $u\neq 0$.

By considering \eqref{l1.3} and using Proposition \ref{pro1}, we have
\begin{align}\label{l2.3}
\notag \left(a+bD\right) \int_{\R^{N}} \nabla u \ \nabla v  & -\lambda \int_{\R^{N}} u v-\alpha \int_{\R^{N}}(I_{\mu}\ast|u|^{q})|u|^{q-2}u v\\
&-\int_{\R^{N}} (I_{\mu}\ast|u|^{2_{\mu}^{*}})|u|^{2_{\mu}^{*}-2}u v, \ \forall \ v\in H^{1}(\R^{N}) 
\end{align}
which implies that $u\in H^{1}(\R^{N})$ is the solution for the equation
$$
-\left(a+bD\right) \Delta u =\lambda u+\alpha (I_{\mu}\ast|u|^{q})|u|^{q-2}u+(I_{\mu}\ast|u|^{2_{\mu}^{*}})|u|^{2_{\mu}^{*}-2}u \ \hbox{in} \  \R^{N} $$
and the solution to the above problem satisfies the following Pohozaev identity
\[ Q_{\alpha}(u)=\left(a+bD\right)\|\nabla u\|^{2}_{2}-\delta_{q}\alpha\int_{\R^{N}}(I_{\mu}\ast|u|^{q})|u|^{q}-\int_{\R^{N}}(I_{\mu}\ast|u|^{2_{\mu}^{*}})|u|^{2_{\mu}^{*}}=0.\]

Define $v_{n}=u_{n}-u$. Using the fact that $P_{\alpha}(u_n)=o_{n}(1)$ with $Q_{\alpha}(u)=0$, Brezis-Lieb Lemma \cite{Brezis} and Proposition \ref{ppro1}, the following holds:
\begin{equation}\label{l1.19}
\lim\limits_{n\rightarrow\infty} \left(a+bD\right)\|\nabla v_{n}\|^{2}_{2}=\lim\limits_{n\rightarrow\infty} \int_{\R^{N}}(I_{\mu}\ast|v_{n}|^{2_{\mu}^{*}})|v_{n}|^{2_{\mu}^{*}}.
\end{equation}

Let $\lim\limits_{n\rightarrow\infty} \|\nabla v_{n}\|^{2}_{2}=t$ and $\lim\limits_{n\rightarrow\infty}\int_{\R^{N}}(I_{\mu}\ast|v_{n}|^{2_{\mu}^{*}})|v_{n}|^{2_{\mu}^{*}}=m,$ thus from \eqref{l1.19}, we have
\begin{equation*}
at+bt^{\theta}\leq (at+bD)t= m.
\end{equation*}
This subsequently gives 
$2(abt^{\theta+1})^{1/2}\leq m$, i.e.,
$\lim\limits_{n\rightarrow\infty}\|\nabla v_{n}\|^{2{(\theta+1)}}_{2}\leq \frac{m^2}{4ab}$. Using the definition of $S_{HL}$, we have
$S_{HL}^{(\theta+1)}\left( \int_{\R^{N}}(I_{\mu}\ast|v_{n}|^{2_{\mu}^{*}})|v_{n}|^{2_{\mu}^{*}}\right)^\frac{(\theta+1)}{2_{\mu}^{*}}\leq \frac{m^2}{4ab}.$ Hence
\begin{equation}\label{l1.11}
m\geq\left( S_{HL}^{\theta+1}4ab\right) ^\frac{2_{\mu}^{*}}{22_{\mu}^{*}-(\theta+1)} \ \ \ \text{   or   } \ \ \  m=0.
\end{equation}
\textbf{Case 1:} If $m\geq\left( S_{HL}^{\theta+1}4ab\right) ^\frac{2_{\mu}^{*}}{22_{\mu}^{*}-(\theta+1)}$.
Resuming the fact that $\{u_n\}$ is a Palais Smale sequence together with \eqref{l1.101}, \eqref{l1.19} and \eqref{l1.11}, we deduce
\begin{align*}
l=\lim\limits_{n\rightarrow\infty}J_{\alpha}(u_{n})&=J_{\alpha}(u)+\lim\limits_{n\rightarrow\infty}\left( \frac{a}{2}\|\nabla v_{n}\|^{2}_{2}+\frac{bD}{2\theta}\|\nabla v_{n}\|^{2}_{2} -\frac{1}{22_{\mu}^{*}}\int_{\R^{N}}(I_{\mu}\ast|v_{n}|^{2_{\mu}^{*}})|v_{n}|^{2_{\mu}^{*}}\right)\\
&= J_{\alpha}(u)+\lim\limits_{n\rightarrow\infty}\left( \frac{m}{22_{\mu}^{*}\theta}(2_{\mu}^{*}-\theta)+\frac{a}{2}\frac{\theta-1}{\theta} \|\nabla v_{n}\|^{2}_{2}\right)\\
&\geq J_{\alpha}(u)+\frac{m}{2}\left( \frac{2_{\mu}^{*}-\theta}{2_{\mu}^{*}\theta}\right)\\
&\geq J_{\alpha}(u)+\left( S_{HL}^{\theta+1}4ab\right) ^\frac{2_{\mu}^{*}}{22_{\mu}^{*}-(\theta+1)}\left( \frac{2_{\mu}^{*}-\theta}{22_{\mu}^{*}\theta}\right), 
\end{align*}
this implies
$$l-\left( S_{HL}^{\theta+1}4ab\right) ^\frac{2_{\mu}^{*}}{22_{\mu}^{*}-(\theta+1)}\left( \frac{2_{\mu}^{*}-\theta}{22_{\mu}^{*}\theta}\right)\geq J_{\alpha}(u),$$ hence, alternative \textit{(1)} holds.

\textbf{Case II:} If $\lim\limits_{n\rightarrow\infty}\int_{\R^{N}}(I_{\mu}\ast|v_{n}|^{2_{\mu}^{*}})|v_{n}|^{2_{\mu}^{*}}=m=0$. Then by \eqref{l1.19}, $\lim\limits_{n\rightarrow\infty} \|\nabla v_{n}\|_{2}=0$, i.e., $\nabla u_{n}\rightarrow \nabla u$ in $L^{2}(\R^{N})$ or $ u_{n}\rightarrow u$ in $D^{1,2}(\R^{N})$
which further gives $u_{n}\rightarrow  u$ in $L^{2^*}(\R^{N})$.

Testing \eqref{l2.3} and \eqref{l1.3} with $v=u_{n}-u$, we have
\begin{align*}\label{l1.4}
 \left(a+b\|\nabla u_{n}\|_{2}^{2(\theta-1)}\right)& \int_{\R^{N}} |\nabla (u_{n}-u)|^{2}   -\lambda \int_{\R^{N}} | u_{n}-u|^{2}\\
&-\alpha \int_{\R^{N}}[(I_{\mu}\ast|u_{n}|^{q})|u_{n}|^{q-2}u_{n}-(I_{\mu}\ast|u|^{q})|u|^{q-2}u] (u_{n}-u)\\
&-\int_{\R^{N}} [(I_{\mu}\ast|u_{n}|^{2_{\mu}^{*}})|u_{n}|^{2_{\mu}^{*}-2}u_{n}-(I_{\mu}\ast|u|^{2_{\mu}^{*}})|u|^{2_{\mu}^{*}-2}u] (u_{n}-u)=o_{n}(1).
\end{align*}
In above, passing limit  and using the facts that  $u_{n}\rightarrow u$ in $D^{1,2}(\R^{N})$,  
 $u_{n}\rightarrow u$ in $L^{r}(\R^{N})$ for each $2<r\leq 2^{*}$ gives us
$$\lim_{n\rightarrow\infty}\int_{\R^{N}} | u_{n}-u|^{2} =0,$$
 
 $u_{n}\rightarrow u$ in $H^{1}(\R^{N}).$ Thus, the assertion \textit{(2)} holds.
\QED\end{proof} 
\section{Mixed critical case}
This section is devoted to the proof of Theorems \ref{t1} and \ref{t2}. First, ${S_c}$ .  Then, we deploy the min-max principle \cite[Theorem 5.2]{ghoussoub1993duality} to obtain the existence and multiplicity of solutions. 


\begin{lemma}\label{l4.1}
Let $2_{\mu,*}<q<A^{*},  B^{*}<p\leq2_{\mu}^{*}$ and $0<\alpha<\alpha_{1}$, where $\alpha_{1}$ is defined in \eqref{mu1}. Then $\mathfrak{P}^{0}_{\alpha}=\phi$ and $\mathfrak{P_{\alpha}}$ is a smooth manifold of codimension $2$ in $H^{1}(\R^{N})$. Moreover, all the critical points  of $J_{\alpha}|_{\mathfrak{P_{\alpha}}}$ are also the critical point of $J_{\alpha}|_{S_{c}}$.
\end{lemma}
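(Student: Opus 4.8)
The statement is a standard "the Pohožaev/fiber constraint is a natural constraint" lemma, so the plan has three clearly separated pieces. First, I would show $\mathfrak{P}^0_\alpha = \emptyset$ for $0<\alpha<\alpha_1$. Recall that for $u\in\mathfrak{P}_\alpha$ we have $(E_u)'(0)=0$, i.e.
\[
a\|\nabla u\|_2^2 + b\|\nabla u\|_2^{2\theta} = \delta_q\alpha \int_{\R^N}(I_\mu\ast|u|^q)|u|^q + \delta_p \int_{\R^N}(I_\mu\ast|u|^p)|u|^p,
\]
and $u\in\mathfrak{P}^0_\alpha$ additionally forces $(E_u)''(0)=0$. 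Eliminating one of the two Choquard integrals between these two equations (using Remark \ref{r1} and the explicit expressions for $(E_u)'$ and $(E_u)''$), one is left with an inequality of the form $c_1 b\|\nabla u\|_2^{2\theta} \le c_2 \alpha C_q \|\nabla u\|_2^{2q\delta_q} c^{2q(1-\delta_q)}$ together with another relation controlling $\int(I_\mu\ast|u|^p)|u|^p$ from below by a power of $\|\nabla u\|_2$; applying the Gagliardo--Nirenberg inequality \eqref{GN1} (and its HLS-critical analogue via $C_p=S_{HL}^{-2_\mu^*}$ when $p=2_\mu^*$) to bound the Choquard terms by powers of $\|\nabla u\|_2^2$, and using $q\delta_q<\theta<p\delta_p$ (Remark \ref{rem1}), one gets a two-sided estimate on $\|\nabla u\|_2$ that is contradictory precisely when $\alpha<\alpha_1$. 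The constant $\alpha_1$ in \eqref{mu1} is exactly what comes out of optimizing this elimination, so the computation should close.

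Second, with $\mathfrak{P}^0_\alpha=\emptyset$, I would show $\mathfrak{P}_\alpha$ is a smooth manifold of codimension $2$ in $H^1(\R^N)$. Here $\mathfrak{P}_\alpha = \{u : G(u)=0\}$ where $G(u) = (\|u\|_2^2 - c^2,\ P_\alpha(u)) : H^1(\R^N)\to\R^2$, and both components are $C^1$ since $J_\alpha\in C^1$ and the mass functional is smooth. It suffices to check that $dG(u)$ is surjective for every $u\in\mathfrak{P}_\alpha$. The $\|u\|_2^2$-direction is handled by testing against $u$ itself (which does not change the scaling behaviour relevant to $P_\alpha$ in the leading order), and the $P_\alpha$-direction is exactly controlled by the derivative of $s\mapsto P_\alpha(s\star u)$ at $s=0$, which up to a positive constant equals $(E_u)''(0)$ — nonzero because $\mathfrak{P}^0_\alpha=\emptyset$. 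So $dG(u)$ has rank $2$, giving a codimension-$2$ manifold by the implicit function theorem.

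Third, the natural-constraint property: any critical point $u$ of $J_\alpha|_{\mathfrak{P}_\alpha}$ is a critical point of $J_\alpha|_{S_c}$. By Lagrange multipliers there are $\nu,\sigma\in\R$ with $J_\alpha'(u) = \nu u + \sigma P_\alpha'(u)$ on $H^1$. Testing this identity with the scaling direction $\frac{d}{ds}(s\star u)|_{s=0}$ and using that $J_\alpha'(u)[\,\cdot\,] $ applied to that direction is $(E_u)'(0)=0$ (since $u\in\mathfrak{P}_\alpha$), that the mass term contributes $0$ along scaling (as $\|s\star u\|_2^2\equiv c^2$), and that $P_\alpha'(u)$ applied to the scaling direction is again a positive multiple of $(E_u)''(0)\ne0$, forces $\sigma=0$; hence $J_\alpha'(u)=\nu u$, which is exactly the Euler--Lagrange equation on $S_c$.

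The main obstacle is the first step: correctly carrying out the elimination between $(E_u)'(0)=0$ and $(E_u)''(0)=0$, feeding in the Gagliardo--Nirenberg/HLS bounds with the right exponents, and verifying that the resulting threshold is exactly the $\alpha_1$ of \eqref{mu1} (including the $p=2_\mu^*$ borderline case where $C_p=S_{HL}^{-2_\mu^*}$ and $\delta_p=1$). Steps two and three are routine once $\mathfrak{P}^0_\alpha=\emptyset$ is in hand, the only mild care being to record that $\frac{d}{ds}P_\alpha(s\star u)|_{s=0}$ is a fixed positive multiple of $(E_u)''(0)$.
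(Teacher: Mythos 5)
Your proposal follows essentially the same route as the paper: the emptiness of $\mathfrak{P}^{0}_{\alpha}$ is obtained exactly as in the paper by eliminating between $P_{\alpha}(u)=0$ and $(E_{u})''(0)=0$, bounding the two Choquard terms via \eqref{GN1} (resp.\ $S_{HL}$ when $p=2_{\mu}^{*}$), and reading off the contradictory two-sided bound on $\|\nabla u\|_{2}$ that produces the threshold $\alpha_{1}$; the codimension-$2$ statement is the standard surjectivity check that the paper delegates to \cite[Lemma 5.2]{soave2020normalized1}. The only substantive difference is in the natural-constraint step: you propose to test $J_{\alpha}'(u)=\nu u+\sigma P_{\alpha}'(u)$ directly against the dilation direction $\frac{N}{2}u+x\cdot\nabla u$, which for a general $u\in H^{1}(\R^{N})$ is not an admissible test function; the paper avoids this by first writing down the modified elliptic equation that $u$ solves with the multiplier $\chi$ and then invoking the Pohožaev identity of that equation, which yields the same relation $\chi\,(E_{u})''(0)=0$ rigorously. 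Your computation is the formal version of the same argument and closes once this is repackaged as in the paper (or once one justifies the dilation test via the regularity of solutions of the Euler--Lagrange equation).
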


\begin{proof}
Let on the contrary that $\mathfrak{P}^{0}_{\alpha}\neq\phi$ then there exists $u\in\mathfrak{P}^{0}_{\alpha}$, i.e., $P_{\alpha}(u)=0$ and $(E_{u})''(0)=0$. Precisely, we have
\begin{equation}\label{l1.13}
a\|\nabla u\|^{2}_{2}+b\|\nabla u\|^{2\theta}_{2}-\delta_{q}\alpha\int_{\R^{N}}(I_{\mu}\ast|u|^{q})|u|^{q}-\delta_{p}\int_{\R^{N}}(I_{\mu}\ast|u|^{p})|u|^{p}=0
\end{equation} 
and 
\begin{equation}\label{l1.14}
2a\|\nabla u\|^{2}_{2}+2\theta b\|\nabla u\|^{2\theta}_{2}-\alpha2q\delta_{q}^2\int_{\R^{N}}(I_{\mu}\ast|u|^{q})|u|^{q}-2p\delta_{p}^2\int_{\R^{N}}(I_{\mu}\ast|u|^{p})|u|^{p}=0.
\end{equation} 
By \eqref{l1.13}, \eqref{l1.14} and Gagliardo-Nirenberg inequality \eqref{GN1} (using the definition of $S_{HL}$, if $p=2_{\mu}^{*}$), we obtain
\begin{align}\label{l1.15}
\notag a(p\delta_{p}-1)\|\nabla u\|^{2}_{2}+b(p\delta_{p}-\theta)\|\nabla u\|^{2\theta}_{2}&=\alpha \delta_{q}(p\delta_{p}-q\delta_{q})\int_{\R^{N}}(I_{\mu}\ast|u|^{q})|u|^{q}\\
&\leq \alpha \delta_{q}(p\delta_{p}-q\delta_{q}) C_{q}\|\nabla u\|_{2}^{2q\delta_{q}}c^{2q(1-\delta_{q})},
\end{align}
and
\begin{align}\label{l1.161}
\notag a(1-q\delta_{q})\|\nabla u\|^{2}_{2}+b(\theta-q\delta_{q})\|\nabla u\|^{2\theta}_{2}&= \delta_{p}(p\delta_{p}-q\delta_{q})\int_{\R^{N}}(I_{\mu}\ast|u|^{p})|u|^{p}\\
&\leq  \delta_{p}(p\delta_{p}-q\delta_{q}) C_{p}\|\nabla u\|_{2}^{2p\delta_{p}}c^{2p(1-\delta_{p})}.
\end{align} 
By \eqref{l1.15} and \eqref{l1.161}, the following estimate holds
$$\left( \dfrac{b(\theta-q\delta_{q})}{\delta_{p}(p\delta_{p}-q\delta_{q}) C_{p}c^{2p(1-\delta_{p})}}\right) ^{\frac{1}{2p\delta_{p}-2\theta}}\leq\|\nabla u\|_{2}\leq \left( \dfrac{\alpha \delta_{q}(p\delta_{p}-q\delta_{q}) C_{q}c^{2q(1-\delta_{q})}}{b(p\delta_{p}-\theta)}\right) ^{\frac{1}{2\theta-2q\delta_{q}}}.$$
It implies
$$\left( \dfrac{b(\theta-q\delta_{q})}{\delta_{p}(p\delta_{p}-q\delta_{q}) C_{p}c^{2p(1-\delta_{p})}}\right) ^{\frac{\theta-q\delta_{q}}{p\delta_{p}-\theta}}\dfrac{b(p\delta_{p}-\theta) }{\delta_{q}(p\delta_{p}-q\delta_{q}) C_{q}c^{2q(1-\delta_{q})}}\leq \alpha,$$
which is a contradiction to the fact that $\alpha<\alpha_{1}$. Hence, our assumption is wrong, consequently $\mathfrak{P}^{0}_{\alpha}\neq\phi$. Using the similar arguments as in \cite[Lemma 5.2]{soave2020normalized1}, $\mathfrak{P_{\alpha}}$ is a smooth manifold of codimension $2$ in $H^{1}(\R^{N})$.

Let $u\in \mathfrak{P_{\alpha}}$ be a critical point for $J_{\alpha}|_{\mathfrak{P_{\alpha}}}$  then by the Lagrange multiplier rule, there exist $\lambda,\chi\in \R$ such that 
$$J'_{\alpha}(u)(v)-\lambda\int_{\R^{N}}uv-\chi P'_{\alpha} (u)(v)=0, \ v\in H^{1}(\R^{N}),$$ 
this implies that $u$ solves the following equation:
\begin{align*}
-\left(a(1-2\chi)+b(1-2\chi \theta)\|\nabla u\|_{2}^{2(\theta-1)}\right) &\Delta u  -\lambda u+\alpha (2\chi q\delta_{q}-1) (I_{\mu}\ast|u|^{q})|u|^{q-2}u\\
&+(2\chi p\delta_{p}-1)(I_{\mu}\ast|u|^{p})|u|^{p-2}u=0
\end{align*}
then the corresponding Pohozaev identity is:
\begin{align*}
\notag a(1-2\chi)\|\nabla u\|_{2}^{2}+b(1-2\chi \theta)\|\nabla u\|_{2}^{2\theta}&+\alpha\delta_{q}(2\chi q\delta_{q}-1)\int_{\R^{N}}(I_{\mu}\ast|u|^{q})|u|^{q}\\
&+\delta_{p}(2\chi p\delta_{p}-1)\int_{\R^{N}}(I_{\mu}\ast|u|^{p})|u|^{p}=0.
\end{align*}
As, $u\in \mathfrak{P_{\alpha}}$ and $u$ does not belongs to $\mathfrak{P}^{0}_{\alpha}$, we have
\begin{align*}
 \chi(a\|\nabla u\|_{2}^{2}+b\theta\|\nabla u\|_{2}^{2\theta}-\alpha q\delta_{q}^{2}\int_{\R^{N}}(I_{\mu}\ast|u|^{q})|u|^{q}- p\delta_{p}^{2}\int_{\R^{N}}(I_{\mu}\ast|u|^{p})|u|^{p})=0,
\end{align*}
which implies,
$$J'_{\alpha}(u)(v)-\lambda\int_{\R^{N}}uv=0, \ v\in H^{1}(\R^{N}).$$
Hence, $u$ is the critical point of $J_{\alpha}|_{S_{c}}$. 
\QED\end{proof}

Consider, the functional 
\begin{align*} J_{\alpha}(u)&=\frac{a}{2}\|\nabla u\|^{2}_{2}+\frac{b}{2\theta}\|\nabla u\|^{2\theta}_{2}-\frac{\alpha}{2q}\int_{\R^{N}}(I_{\mu}\ast|u|^{q})|u|^{q}-\frac{1}{2p}\int_{\R^{N}}(I_{\mu}\ast|u|^{p})|u|^{p}\\
&\geq \frac{a}{2}\|\nabla u\|^{2}_{2}+\frac{b}{2\theta}\|\nabla u\|^{2\theta}_{2}-\frac{\alpha}{2q}C_{q}c^{2q(1-\delta_{q})}\|\nabla u\|_{2}^{2q\delta_{q}}-\frac{1}{2p}C_{p}c^{2p(1-\delta_{p})}\|\nabla u\|_{2}^{2p\delta_{p}}.
\end{align*}
We define, the function $g:\R^{+}\rightarrow \R$ such that
$$g(t)=\frac{a}{2}t^{2}+\frac{b}{2\theta}t^{2\theta}-\frac{\alpha}{2q}C_{q}c^{2q(1-\delta_{q})}t^{2q\delta_{q}}-\frac{1}{2p}C_{p}c^{2p(1-\delta_{p})}t^{2p\delta_{p}}.$$
We can see that, $J_{\alpha}(u)\geq g(\|\nabla u\|_{2})$.
\begin{lemma}\label{l4.2}
Let $a_{1},a_{2},a_{3},a_{4}\in(0,\infty)$, $p_{1}\in(2\theta,\infty)$ and $q_{1}\in(0,2)$. If 
\begin{equation}\label{l2.1}
\left(t_{1}^{\frac{2\theta-q_{1}}{p_{1}-2\theta}}-t_{1}^{\frac{p_1-q_{1}}{p_{1}-2\theta}}\right)\left[  \dfrac{a_{1}}{a_{4}}\left(\dfrac{a_{2}}{a_{3}} \right)^{\frac{2-q_{1}}{p_{1}-2\theta}}+\dfrac{1}{a_{4}}\dfrac{a_{2}^{\frac{p_{1}-q_{1}}{p_{1}-2\theta}}}{a_{3}^{\frac{2\theta-q_{1}}{p_{1}-2\theta}}}  \right]>1 
\end{equation}
 where $t_{1}=\frac{2\theta(2\theta-q_{1})(2\theta-2)}{p_{1}(p_{1}-q_{1})(p_{1}-2)}$ then $h(t)=a_{1}t^{2}+a_{2}t^{2\theta}-a_{3}t^{p_{1}}-a_{4}t^{q_{1}}$ has a global strict maxima of positive level and a local strict minima at negative level on $[0,\infty)$.
\end{lemma}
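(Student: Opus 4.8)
The plan is to read off the geometry of $h(t)=a_{1}t^{2}+a_{2}t^{2\theta}-a_{3}t^{p_{1}}-a_{4}t^{q_{1}}$ on $[0,\infty)$ from its boundary behaviour together with a sign‑change count for $h'$, the only genuinely computational point being to show that hypothesis \eqref{l2.1} forces $\sup_{t>0}h(t)>0$. First I would record the elementary facts used throughout: $t_{1}\in(0,1)$, since $t_{1}$ is the product of $\tfrac{2\theta}{p_{1}}$, $\tfrac{2\theta-q_{1}}{p_{1}-q_{1}}$, $\tfrac{2\theta-2}{p_{1}-2}$, each in $(0,1)$ because $q_{1}<2<2\theta<p_{1}$; that $h(0)=0$; that $h(t)<0$ for small $t>0$, since among the exponents $q_{1}<2<2\theta<p_{1}$ the term $-a_{4}t^{q_{1}}$ dominates near $0$; and that $h(t)\to-\infty$ as $t\to\infty$, since then $-a_{3}t^{p_{1}}$ dominates.

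Next, the crux: producing $t_{*}>0$ with $h(t_{*})>0$. Writing $h(t)=t^{q_{1}}\big(\rho(t)-a_{4}\big)$ with $\rho(t)=a_{1}t^{2-q_{1}}+a_{2}t^{2\theta-q_{1}}-a_{3}t^{p_{1}-q_{1}}$, it suffices to find $t_{*}$ with $\rho(t_{*})>a_{4}$. I would take $t_{*}=\big(t_{1}\,a_{2}/a_{3}\big)^{1/(p_{1}-2\theta)}$, chosen precisely so that $a_{3}t_{*}^{p_{1}-q_{1}}=t_{1}a_{2}t_{*}^{2\theta-q_{1}}$, whence $\rho(t_{*})=a_{1}t_{*}^{2-q_{1}}+(1-t_{1})a_{2}t_{*}^{2\theta-q_{1}}$. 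Substituting the value of $t_{*}$ and using the elementary bound $t_{1}^{(2-q_{1})/(p_{1}-2\theta)}\ge(1-t_{1})\,t_{1}^{(2\theta-q_{1})/(p_{1}-2\theta)}$ — equivalent to $t_{1}^{(2-2\theta)/(p_{1}-2\theta)}\ge1-t_{1}$, which holds since the exponent is negative and $t_{1}\in(0,1)$ — together with the identity $t_{1}^{(2\theta-q_{1})/(p_{1}-2\theta)}-t_{1}^{(p_{1}-q_{1})/(p_{1}-2\theta)}=(1-t_{1})\,t_{1}^{(2\theta-q_{1})/(p_{1}-2\theta)}$, one checks that $\rho(t_{*})$ is at least the quantity whose exceeding $a_{4}$ is exactly assertion \eqref{l2.1}. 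Hence $\rho(t_{*})>a_{4}$, i.e. $h(t_{*})>0$. Identifying the right test point and the bookkeeping of the fractional exponents is the step I expect to be the main obstacle; everything after it is soft.

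Finally I would pin down the shape of $h$. Set $\Phi(t):=t^{1-q_{1}}h'(t)=-q_{1}a_{4}+2a_{1}t^{2-q_{1}}+2\theta a_{2}t^{2\theta-q_{1}}-p_{1}a_{3}t^{p_{1}-q_{1}}$; listed by increasing exponent $0<2-q_{1}<2\theta-q_{1}<p_{1}-q_{1}$, its coefficients follow the sign pattern $(-,+,+,-)$, so by Descartes' rule of signs for polynomials with real exponents $\Phi$ has at most two positive zeros. On the other hand, by the asymptotics above and $h(t_{*})>0$, the continuous function $h$ attains its maximum over $[0,\infty)$ at an interior point $s_{2}$ with $h(s_{2})\ge h(t_{*})>0$, and its minimum over $[0,s_{2}]$ at an interior point $s_{1}\in(0,s_{2})$ with $h(s_{1})<0$; both are critical points, so $\Phi$ has exactly the two positive zeros $0<s_{1}<s_{2}$. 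Since $\Phi(0^{+})<0$ and $\Phi(t)\to-\infty$, the only sign distribution compatible with $h(s_{2})$ exceeding the values of $h$ near $0$ and with $h\to-\infty$ is $\Phi<0$ on $(0,s_{1})$, $\Phi>0$ on $(s_{1},s_{2})$, $\Phi<0$ on $(s_{2},\infty)$. Thus $h$ is strictly decreasing, then strictly increasing, then strictly decreasing, so $s_{1}$ is a strict local minimum with $h(s_{1})<\lim_{t\to0^{+}}h(t)=0$ and $s_{2}$ is the strict global maximum with $h(s_{2})>0$, which is the assertion.
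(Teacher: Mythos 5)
Your proof is correct and complete. The paper itself gives no argument for this lemma (it defers to Lemma~4.3 of Li--Lou \cite{li2022normalized}), so strictly speaking there is nothing in the text to compare against; your write-up supplies exactly the details being omitted, and it follows the standard route for such four-term fiber maps: factor out the lowest power, $h(t)=t^{q_{1}}(\rho(t)-a_{4})$, exhibit an explicit test point at which $\rho$ exceeds $a_{4}$, and then count critical points. All the computational steps check out: $t_{1}\in(0,1)$ (using $\theta>1$ so that $2\theta-2>0$); the choice $t_{*}=(t_{1}a_{2}/a_{3})^{1/(p_{1}-2\theta)}$ does give $a_{3}t_{*}^{p_{1}-q_{1}}=t_{1}a_{2}t_{*}^{2\theta-q_{1}}$; the identity $t_{1}^{(2\theta-q_{1})/(p_{1}-2\theta)}-t_{1}^{(p_{1}-q_{1})/(p_{1}-2\theta)}=(1-t_{1})t_{1}^{(2\theta-q_{1})/(p_{1}-2\theta)}$ and the inequality $t_{1}^{(2-2\theta)/(p_{1}-2\theta)}>1>1-t_{1}$ together reduce \eqref{l2.1} to $\rho(t_{*})>a_{4}$; and the sign pattern $(-,+,+,-)$ of $\Phi(t)=t^{1-q_{1}}h'(t)$ combined with the two forced critical values of opposite sign pins down the decreasing--increasing--decreasing profile. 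The only point I would flag is the appeal to ``Descartes' rule of signs for polynomials with real exponents'': this is a genuine classical fact (provable by dividing by the lowest power and inducting with Rolle's theorem), but since it is less universally known than the integer-exponent version, a one-line justification or a reference would make the step self-contained; the cited source effectively performs this same count by hand on $\rho'$.
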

\begin{proof}
The proof is similar to that of the \cite[Lemma 4.3]{li2022normalized}. For the sake of brevity, we omit the details.
\QED\end{proof}

\begin{lemma}\label{l4.3}
Let $2_{\mu,*}<q<A^{*},  B^{*}<p\leq2_{\mu}^{*}$ and $0<\alpha<\alpha_{2}$, where $\alpha_{2}$ is defined in \eqref{mu2}. Then $g$ has a global strict maxima of positive level and a local strict minima at negative level. Moreover, there exist $0<t_{0}<t_{1}$ (dependent on $c$ and $\alpha$) such that $g(t_{0})=g(t_{1})=0$ and $g(t)>0$ if and only if $t\in(t_{0},t_{1})$. 
\end{lemma}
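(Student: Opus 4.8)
The plan is to reduce the statement to the auxiliary single-variable result Lemma~\ref{l4.2}, applied to the function $g$ with a suitable choice of the constants $a_1,a_2,a_3,a_4$ and exponents $p_1,q_1$. Concretely, I would set
\[
a_1=\frac{a}{2},\quad a_2=\frac{b}{2\theta},\quad a_3=\frac{1}{2p}C_pc^{2p(1-\delta_p)},\quad a_4=\frac{\alpha}{2q}C_qc^{2q(1-\delta_q)},
\]
together with $p_1=2p\delta_p$ and $q_1=2q\delta_q$, so that $g(t)=a_1t^2+a_2t^{2\theta}-a_3t^{p_1}-a_4t^{q_1}$ exactly. First I would check the hypotheses of Lemma~\ref{l4.2}: since $B^*<p\le 2_\mu^*$ gives $p\delta_p>\theta$ by Remark~\ref{rem1}, we have $p_1=2p\delta_p>2\theta$; since $2_{\mu,*}<q<A^*$ forces $q\delta_q<1$, we get $q_1=2q\delta_q\in(0,2)$. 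So the structural assumptions $p_1\in(2\theta,\infty)$, $q_1\in(0,2)$ hold.

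The substantive step is to verify the inequality \eqref{l2.1} under the hypothesis $\alpha<\alpha_2$. With the above substitutions, $\frac{1}{a_4}$ carries a factor $\frac{1}{\alpha}$, so \eqref{l2.1} reads (after multiplying through) as a bound of the form $\alpha<(\text{explicit constant})$, and the claim is that this explicit constant is precisely $\alpha_2$ as defined in \eqref{mu2}. I would carry this out by computing $t_1=\frac{2\theta(2\theta-q_1)(2\theta-2)}{p_1(p_1-q_1)(p_1-2)}$ in terms of $\theta,p\delta_p,q\delta_q$ — noting that all the factors of $2$ cancel appropriately so that $t_1$ matches the base of the power $\kappa$ in \eqref{kappa}, so that the bracket $\bigl(t_1^{(2\theta-q_1)/(p_1-2\theta)}-t_1^{(p_1-q_1)/(p_1-2\theta)}\bigr)$ becomes exactly $\kappa$ — and then matching the remaining algebraic factor against the bracket in the definition of $\alpha_2$. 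The exponents $\frac{2-q_1}{p_1-2\theta}=\frac{1-q\delta_q}{p\delta_p-\theta}$ and $\frac{p_1-q_1}{p_1-2\theta}=\frac{p\delta_p-q\delta_q}{p\delta_p-\theta}$ should line up with those appearing in \eqref{mu2}, and tracking the powers of $c$ should reproduce the exponents $2q(1-\delta_q)+\frac{2p(1-\delta_p)(1-q\delta_q)}{p\delta_p-\theta}$ and $2q(1-\delta_q)+\frac{2p(1-\delta_p)(\theta-q\delta_q)}{p\delta_p-\theta}$ there.

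Once \eqref{l2.1} is verified, Lemma~\ref{l4.2} gives directly that $g$ has a global strict maximum at positive level and a local strict minimum at negative level on $[0,\infty)$. For the last assertion about the zeros, I would argue by elementary calculus on $g$: $g(0)=0$, $g(t)<0$ for small $t>0$ (the $-a_4t^{q_1}$ term dominates since $q_1<2<2\theta<p_1$), $g$ increases through a positive maximum, and $g(t)\to-\infty$ as $t\to\infty$ (the $-a_3t^{p_1}$ term dominates since $p_1>2\theta>2>q_1$). Combining this shape with the fact that $g$ has exactly one local minimum (negative) and one local maximum (positive) on $(0,\infty)$ — which follows from the strict extremum structure provided by Lemma~\ref{l4.2} — the intermediate value theorem yields exactly two positive zeros $t_0<t_1$ with $g<0$ on $(0,t_0)$, $g>0$ on $(t_0,t_1)$, and $g<0$ on $(t_1,\infty)$, giving the stated characterization $g(t)>0 \iff t\in(t_0,t_1)$. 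I expect the main obstacle to be purely bookkeeping: the careful cancellation of the factors of $2$ and the reconciliation of the powers of $c$ and of the various ratios $\theta, p\delta_p, q\delta_q$ so that the threshold coming from \eqref{l2.1} is identically $\alpha_2$; there is no conceptual difficulty once the dictionary between Lemma~\ref{l4.2} and $g$ is fixed.
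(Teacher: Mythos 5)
Your proposal is correct and follows essentially the same route as the paper: the paper's proof makes exactly the substitution $a_1=\tfrac{a}{2}$, $a_2=\tfrac{b}{2\theta}$, $a_3=\tfrac{1}{2p}C_pc^{2p(1-\delta_p)}$, $a_4=\tfrac{\alpha}{2q}C_qc^{2q(1-\delta_q)}$, $p_1=2p\delta_p$, $q_1=2q\delta_q$ into Lemma \ref{l4.2}, verifies \eqref{l2.1} by the same bookkeeping that turns the bracket into $\kappa$ and the threshold into $\alpha_2$, and then deduces the two-zeros statement from $g(0)=0$ together with $g(t)\to-\infty$ as $t\to\infty$ and the extremum structure.
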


\begin{proof}
Let, $a_{1}=\frac{a}{2}$, $a_{2}=\frac{b}{2\theta}$, $a_{3}=\frac{1}{2p}C_{p}c^{2p(1-\delta_{p})}$, $a_{4}=\frac{\alpha}{2q}C_{q}c^{2q(1-\delta_{q})}$, $p_{1}=2p\delta_{p}$ and $q_{1}=2q\delta_{q}$ in the Lemma \ref{l4.2}. To employ the Lemma \ref{l4.2}, we need to verify \eqref{l2.1}.
Consider,

\begin{align*}
&\left(   t_{1}^{\frac{2\theta-q_{1}}{p_{1}-2\theta}}-t_{1}^{\frac{p_1-q_{1}}{p_{1}-2\theta}}\right)\left[  \dfrac{a_{1}}{a_{4}}\left(\dfrac{a_{2}}{a_{3}} \right)^{\frac{2-q_{1}}{p_{1}-2\theta}}+\dfrac{1}{a_{4}}\dfrac{a_{2}^{\frac{p_{1}-q_{1}}{p_{1}-2\theta}}}{a_{3}^{\frac{2\theta-q_{1}}{p_{1}-2\theta}}}  \right]\\
& =\kappa\left[\dfrac{aq}{\alpha C_{q}c^{2q(1-\delta_{q})}}\left(\dfrac{bp}{\theta C_{p}c^{2p(1-\delta_{p})}} \right)^{\frac{1-q\delta_{q}}{p\delta_{p}-\theta}}+\dfrac{2q}{\alpha C_{q}c^{2q(1-\delta_{q})}}\dfrac{\frac{b}{2\theta}^{\frac{p\delta_{p}-q\delta_{q}}{p\delta_{p}-\theta}}}{\left(\frac{1}{2p}C_{p}c^{2p(1-\delta_{p})} \right) ^{\frac{\theta-q\delta_{q}}{p\delta_{p}-\theta}}}  \right]\\
&  =\kappa\frac{q}{C_{q}\alpha}\left[\dfrac{a}{c^{2q(1-\delta_{q})+\frac{2p(1-\delta_{p})(1-q\delta_{q})}{(p\delta_{p}-\theta)}}}\left(\dfrac{bp}{\theta C_{p}} \right)^{\frac{1-q\delta_{q}}{p\delta_{p}-\theta}}+\dfrac{2}{c^{2q(1-\delta_{q})+\frac{2p(1-\delta_{p})(\theta-q\delta_{q})}{(p\delta_{p}-\theta)}}}\dfrac{\left( \frac{b}{2\theta}\right) ^{\frac{p\delta_{p}-q\delta_{q}}{p\delta_{p}-\theta}}}{\left(\frac{C_{p}}{2p} \right) ^{\frac{\theta-q\delta_{q}}{p\delta_{p}-\theta}}}  \right]\\
&>1,
\end{align*}
as $\alpha<\alpha_{2}$, where $\kappa$ is defined in $\eqref{kappa}$. Thus, the conclusion follows from the Lemma \ref{l4.2}, i.e., $g$ has a global strict maxima of positive level and a local strict minima at negative level. Moreover, $g(0)=0$ and $g(t)\rightarrow -\infty$ as $t\rightarrow\infty$ follow the conclusion.
\QED\end{proof}

\begin{lemma}\label{l4.4}
Let $2_{\mu,*}<q<A^{*},  B^{*}<p\leq2_{\mu}^{*}$ and $0<\alpha<\min\{\alpha_{1},\alpha_{2}\}$, where $\alpha_{1}$ and $\alpha_{2}$ are defined in \eqref{mu1} and \eqref{mu2}, respectively. For any $u\in S_{c}$, the function $E_{u}$ has exactly two critical points $t^{1}_{u}<t^{3}_{u}\in\R$ and two zeros $t^{2}_{u}<t^{4}_{u}\in\R$ such that $t^{1}_{u}<t^{2}_{u}<t^{3}_{u}<t^{4}_{u}$. Moreover, we have the following:
\begin{enumerate}
\item[(1)] $t^{1}_{u}\star u\in \mathfrak{P_{\alpha}^{+}}$, $t^{3}_{u}\star u\in \mathfrak{P_{\alpha}^{-}}$ and $\mathfrak{P_{\alpha}}=\{t^{1}_{u}\star u,t^{3}_{u}\star u\}$.
\item[(2)] $\|\nabla (s\star u)\|_{2}\leq t_{0}$ ($t_{0}$ is as obtained in Lemma \ref{l4.3}) for all $s\leq t^{2}_{u}$ and $$J_{\alpha}(t^{3}_{u}\star u)=\max\{J_{\alpha}(t\star u):t\in\R\}>0.$$ Also, $$J_{\alpha}(t^{1}_{u}\star u)=\min\{J_{\alpha}(t\star u):t\in\R \text{ and } \|\nabla (s\star u)\|_{2}\leq t_{0}\}<0$$ and $E_{u}$ is strictly decreasing on $(t^{3}_{u},\infty)$.
\item[(3)] $u\in S_{c}\mapsto t^{1}_{u}\in \R$ and $u\in S_{c}\mapsto t^{3}_{u}\in \R$ are $C^{1}$ functional.
\end{enumerate}
\end{lemma}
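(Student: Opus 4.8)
The plan is to reduce the statement to a one–variable study of the fibre map $E_u$, using the inequality $E_u(s)=J_\alpha(s\star u)\ge g\bigl(e^{s}\|\nabla u\|_2\bigr)$ (this is $J_\alpha(\,\cdot\,)\ge g(\|\nabla\,\cdot\,\|_2)$ applied to $v=s\star u$, since $\|\nabla(s\star u)\|_2=e^{s}\|\nabla u\|_2$) together with the shape of $g$ from Lemma~\ref{l4.3}. With $x=e^{s}$, the map $\Phi_u(x):=E_u(\log x)$ has the form $c_2x^{2}+c_{2\theta}x^{2\theta}-c_{q_1}x^{q_1}-c_{p_1}x^{p_1}$ with all $c_\bullet>0$ and exponents $0<q_1:=2q\delta_q<2<2\theta<p_1:=2p\delta_p$ (using $q<A^{*}$, $q>2_{\mu,*}$, $p>B^{*}$ and Remark~\ref{rem1}; the HLS/Sobolev inequality replaces Gagliardo--Nirenberg when $p=2_{\mu}^{*}$). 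First I would show $E_u$ has at most two critical points by studying $H(x):=x^{1-q_1}\Phi_u'(x)$, which has the sign of $\Phi_u'$ on $(0,\infty)$: one checks $H(0^{+})<0$, $H(+\infty)=-\infty$, and, after dividing the equation $H'(x)=0$ by $x^{1-q_1}$, that $H'$ vanishes exactly once because it reduces to an increasing function with a positive value at $x=0$ meeting an increasing function that grows strictly faster (here $2\theta-2<p_1-2$ enters). Hence $H$, and therefore $\Phi_u'$, changes sign at most twice, and exactly twice precisely when $\max H>0$; this holds because, by Lemma~\ref{l4.3}, $g(t_*)>0$ for some $t_*\in(t_0,t_1)$, so $E_u$ takes a positive value at $s_*:=\log(t_*/\|\nabla u\|_2)$, while $E_u(s)\to0$ as $s\to-\infty$ and $E_u(s)\to-\infty$ as $s\to+\infty$ (all exponents of $\Phi_u$ are positive and the largest, $p_1$, carries a negative coefficient). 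Thus $E_u$ is strictly decreasing, then strictly increasing, then strictly decreasing, with exactly two critical points $t_u^{1}<t_u^{3}$ and $E_u(t_u^{1})<0<E_u(t_u^{3})$; the sign and limit data then give a unique zero $t_u^{2}\in(t_u^{1},t_u^{3})$ and a unique zero $t_u^{4}\in(t_u^{3},\infty)$, whence $t_u^{1}<t_u^{2}<t_u^{3}<t_u^{4}$ and $t_u^{3}$ is the global maximum of $E_u$ on $\R$, at positive level.

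For part (1), the group law $(s+t)\star u=s\star(t\star u)$ gives $(E_{t\star u})'(0)=E_u'(t)$ and $(E_{t\star u})''(0)=E_u''(t)$. Thus $E_u'(t_u^{i})=0$ means $t_u^{i}\star u\in\mathfrak{P}_\alpha$ (Remark~\ref{r1}); since $t_u^{1}$ is a strict local minimum and $t_u^{3}$ a strict local maximum of $E_u$, we have $E_u''(t_u^{1})\ge0$ and $E_u''(t_u^{3})\le0$, and since $\alpha<\alpha_1$ Lemma~\ref{l4.1} gives $\mathfrak{P}_\alpha^{0}=\emptyset$, so both are nonzero. Hence $E_u''(t_u^{1})>0$ and $E_u''(t_u^{3})<0$, i.e.\ $t_u^{1}\star u\in\mathfrak{P}_\alpha^{+}$ and $t_u^{3}\star u\in\mathfrak{P}_\alpha^{-}$; and because $t_u^{1},t_u^{3}$ exhaust the critical points of $E_u$, the fibre $\{s\star u:s\in\R\}$ meets $\mathfrak{P}_\alpha$ in exactly the two points $t_u^{1}\star u$ and $t_u^{3}\star u$.

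For the remaining parts of (2), set $s_0:=\log(t_0/\|\nabla u\|_2)$, so that $\|\nabla(s\star u)\|_2\le t_0$ iff $s\le s_0$. If $s_0<t_u^{2}$ then $E_u(s_0)<0$ (as $E_u<0$ on $(-\infty,t_u^{2})$), whereas $E_u(s_0)=J_\alpha(s_0\star u)\ge g(t_0)=0$; this contradiction forces $t_u^{2}\le s_0$, which is exactly the claim that $\|\nabla(s\star u)\|_2\le t_0$ for all $s\le t_u^{2}$, and in particular $t_u^{1}<t_u^{2}\le s_0$. On $(-\infty,s_0]$, $E_u$ is strictly decreasing on $(-\infty,t_u^{1}]$ and $t_u^{1}$ is its only local minimum; since $E_u(s_0)\ge g(t_0)=0>E_u(t_u^{1})$ and $E_u(s)\to0>E_u(t_u^{1})$ as $s\to-\infty$, the minimum of $E_u$ over $(-\infty,s_0]$ equals $E_u(t_u^{1})=J_\alpha(t_u^{1}\star u)<0$; the strict decrease of $E_u$ on $(t_u^{3},\infty)$ was already part of the shape analysis. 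Finally, (3) follows from the implicit function theorem applied to $\Psi(u,s):=(E_u)'(s)$, which is $C^{1}$ on $S_c\times\R$ because $\|\nabla u\|_2^{2}$, $\|\nabla u\|_2^{2\theta}$ and the two Choquard terms are $C^{1}$ in $u$ while everything is smooth in $s$: at $(u,t_u^{1})$ one has $\partial_s\Psi=E_u''(t_u^{1})>0$, at $(u,t_u^{3})$, $\partial_s\Psi=E_u''(t_u^{3})<0$, and the resulting local $C^{1}$ branches must coincide with $v\mapsto t_v^{1}$ and $v\mapsto t_v^{3}$ by the uniqueness of the two critical points established above.

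The main obstacle is the very first step: proving, for every $u\in S_c$ and every admissible $\alpha$, that $E_u$ has \emph{exactly} two critical points with the alternating decreasing/increasing/decreasing profile. Once the auxiliary function $H$ and the division by $x^{1-q_1}$ are set up this is a finite computation, but it rests on making the chain of exponent inequalities $0<q_1<2<2\theta<p_1$ cooperate and on feeding in the two quantitative thresholds — $\alpha<\alpha_1$ through $\mathfrak{P}_\alpha^{0}=\emptyset$ and $\alpha<\alpha_2$ through the positivity of $\max g$ in Lemma~\ref{l4.3}. The remaining assertions are then essentially bookkeeping on the fibre.
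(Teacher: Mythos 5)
Your argument is correct, and for the central step it takes a genuinely different route from the paper. The paper obtains the two-critical-point structure of $E_{u}$ by applying the abstract Lemma \ref{l4.2} to $f(t)=J_{\alpha}(u_{t})$ and then verifying its quantitative hypothesis \eqref{l2.1} for the actual coefficients $\frac{a}{2}\|\nabla u\|_{2}^{2}$, $\frac{b}{2\theta}\|\nabla u\|_{2}^{2\theta}$, etc., via a fairly long Gagliardo--Nirenberg computation in which $\alpha<\alpha_{2}$ enters directly. You instead prove unconditionally that $E_{u}$ has \emph{at most} two critical points, by showing that $H(x)=x^{1-q_{1}}\Phi_{u}'(x)$ is increasing-then-decreasing with $H(0^{+})<0$ and $H(+\infty)=-\infty$ (your reduction of $H'=0$ to the single crossing of $A+Bx^{2\theta-2}$ with $Cx^{p_{1}-2}$ is sound, since the quotient of the two sides is strictly monotone), and you then get \emph{exactly} two from the single qualitative fact that $E_{u}$ is positive somewhere, which is exactly what Lemma \ref{l4.3} delivers under $\alpha<\alpha_{2}$ via the comparison $E_{u}(s)\geq g(e^{s}\|\nabla u\|_{2})$. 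This bypasses Lemma \ref{l4.2} entirely and isolates the role of $\alpha_{2}$ more cleanly (it only needs $\max g>0$), at the cost of redoing by hand the sign analysis that the paper outsources to the cited lemma. The rest of your proof --- the comparison with $g$ to place $t_{u}^{2}$ left of $s_{0}=\log(t_{0}/\|\nabla u\|_{2})$, the use of $\mathfrak{P}^{0}_{\alpha}=\emptyset$ (Lemma \ref{l4.1}, $\alpha<\alpha_{1}$) to upgrade $E_{u}''(t_{u}^{1})\geq 0$ and $E_{u}''(t_{u}^{3})\leq 0$ to strict inequalities, and the implicit function theorem for part (3) --- coincides with the paper's. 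One small point worth making explicit in a write-up: the limit $E_{u}(s)\to 0^{-}$ as $s\to-\infty$ (because the smallest exponent $q_{1}=2q\delta_{q}>0$ carries a negative coefficient) is what forces $E_{u}(t_{u}^{1})<0$; you use this but state it only in passing.
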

\begin{proof}
Let, $u\in S_{c}$ then $u_{t}=t^{\frac{N}{2}}u(tx)\in S_{c}$ for  $t>0$. Consider, 
$$f(t)=J_{\alpha}(u_t)=\frac{a}{2}t^{2}\|\nabla u\|^{2}_{2}+\frac{b}{2\theta}t^{2\theta }\|\nabla u\|^{2\theta}_{2}-\frac{\alpha}{2q}t^{2\delta_{q}q}\int_{\R^{N}}(I_{\mu}\ast|u|^{q})|u|^{q}-\frac{1}{2p}t^{2\delta_{p}p}\int_{\R^{N}}(I_{\mu}\ast|u|^{p})|u|^{p}.$$
Take, $a_{1}=\frac{a}{2}\|\nabla u\|^{2}_{2}$, $a_{2}=\frac{b}{2\theta}\|\nabla u\|^{2\theta}_{2}$, $a_{3}=\frac{1}{2p}\int_{\R^{N}}(I_{\mu}\ast|u|^{p})|u|^{p}$, $a_{4}=\frac{\alpha}{2q}\int_{\R^{N}}(I_{\mu}\ast|u|^{q})|u|^{q}$, $p_{1}=2p\delta_{p}$ and $q_{1}=2q\delta_{q}$ in the Lemma \ref{l4.2}. We wish to deploy Lemma \ref{l4.2}. So precisely we have to verify \eqref{l2.1}.

Using Gagliardo-Nirenberg inequality \eqref{GN1} (using the definition of $S_{HL}$, if $p=2_{\mu}^{*}$), we get
\begin{align*}
\dfrac{a_{1}}{a_{4}}\left(\dfrac{a_{2}}{a_{3}} \right)^{\frac{2-q_{1}}{p_{1}-2\theta}}&=\dfrac{\frac{a}{2}\|\nabla u\|^{2}_{2}}{\frac{\alpha}{2q}\int_{\R^{N}}(I_{\mu}\ast|u|^{q})|u|^{q}}\left(\dfrac{\frac{b}{2\theta}\|\nabla u\|^{2\theta}_{2}}{\frac{1}{2p}\int_{\R^{N}}(I_{\mu}\ast|u|^{p})|u|^{p}} \right)^{\frac{2-2q\delta_{q}}{2p\delta_{p}-2\theta}}\\
&\geq \dfrac{aq\|\nabla u\|^{2}_{2}}{\alpha C_{q}\|\nabla u\|_{2}^{2q\delta_{q}}c^{2q(1-\delta_{q})}}\left(\dfrac{bp\|\nabla u\|^{2\theta}_{2}}{\theta C_{p}\|\nabla u\|_{2}^{2p\delta_{p}}c^{2p(1-\delta_{p})}} \right)^{\frac{1-q\delta_{q}}{p\delta_{p}-\theta}}\\
&\geq \dfrac{aq}{\alpha C_{q}c^{2q(1-\delta_{q})}}\left(\dfrac{bp}{\theta C_{p}c^{2p(1-\delta_{p})}} \right)^{\frac{1-q\delta_{q}}{p\delta_{p}-\theta}}\\
&\geq \dfrac{aq}{\alpha C_{q}c^{2q(1-\delta_{q})+\frac{2p(1-\delta_{p})(1-q\delta_{q})}{(p\delta_{p}-\theta)}}}\left(\dfrac{bp}{\theta C_{p}} \right)^{\frac{1-q\delta_{q}}{p\delta_{p}-\theta}}.
\end{align*}

Again using  Gagliardo-Nirenberg inequality \eqref{GN1} (using the definition of $S_{HL}$, if $p=2_{\mu}^{*}$), we obtain
\begin{align*}
\dfrac{1}{a_{4}}\dfrac{a_{2}^{\frac{p_{1}-q_{1}}{p_{1}-2\theta}}}{a_{3}^{\frac{2\theta-q_{1}}{p_{1}-2\theta}}} &=\dfrac{1}{\frac{\alpha}{2q}\int_{\R^{N}}(I_{\mu}\ast|u|^{q})|u|^{q}}\dfrac{\left( \frac{b}{2\theta}\|\nabla u\|^{2\theta}_{2}\right) ^{\frac{2p\delta_{p}-2q\delta_{q}}{2p\delta_{p}-2\theta}}}{\left(\frac{1}{2p}\int_{\R^{N}}(I_{\mu}\ast|u|^{p})|u|^{p}\right) ^{\frac{2\theta-2q\delta_{q}}{2p\delta_{p}-2\theta}}}\\
&\geq \dfrac{2q}{\alpha C_{q}\|\nabla u\|_{2}^{2q\delta_{q}}c^{2q(1-\delta_{q})}}\dfrac{\left( \frac{b}{2\theta}\|\nabla u\|^{2\theta}_{2}\right) ^{\frac{p\delta_{p}-q\delta_{q}}{p\delta_{p}-\theta}}}{\left(\frac{1}{2p}C_{p}\|\nabla u\|_{2}^{2p\delta_{p}}c^{2p(1-\delta_{p})}\right) ^{\frac{\theta-q\delta_{q}}{p\delta_{p}-\theta}}}\\
&\geq \dfrac{2q}{\alpha C_{q}c^{2q(1-\delta_{q})}}\dfrac{\left( \frac{b}{2\theta}\right) ^{\frac{p\delta_{p}-q\delta_{q}}{p\delta_{p}-\theta}}}{\left(\frac{1}{2p}C_{p}c^{2p(1-\delta_{p})}\right) ^{\frac{\theta-q\delta_{q}}{p\delta_{p}-\theta}}}\\
&\geq \dfrac{2q}{\alpha C_{q}{c^{2q(1-\delta_{q})+\frac{2p(1-\delta_{p})(\theta-q\delta_{q})}{(p\delta_{p}-\theta)}}}}\dfrac{\left( \frac{b}{2\theta}\right) ^{\frac{p\delta_{p}-q\delta_{q}}{p\delta_{p}-\theta}}}{\left(\frac{C_{p}}{2p}\right) ^{\frac{\theta-q\delta_{q}}{p\delta_{p}-\theta}}}\cdot
\end{align*}

Resuming the above information, one gets
\begin{align*}
&\left(   t_{1}^{\frac{2\theta-q_{1}}{p_{1}-2\theta}}-t_{1}^{\frac{p_1-q_{1}}{p_{1}-2\theta}}\right)\left[  \dfrac{a_{1}}{a_{4}}\left(\dfrac{a_{2}}{a_{3}} \right)^{\frac{2-q_{1}}{p_{1}-2\theta}}+\dfrac{1}{a_{4}}\dfrac{a_{2}^{\frac{p_{1}-q_{1}}{p_{1}-2\theta}}}{a_{3}^{\frac{2\theta-q_{1}}{p_{1}-2\theta}}}  \right]\\
&  =\kappa\frac{q}{C_{q}\alpha}\left[\dfrac{a}{c^{2q(1-\delta_{q})+\frac{2p(1-\delta_{p})(1-q\delta_{q})}{(p\delta_{p}-\theta)}}}\left(\dfrac{bp}{\theta C_{p}} \right)^{\frac{1-q\delta_{q}}{p\delta_{p}-\theta}}+\dfrac{2}{c^{2q(1-\delta_{q})+\frac{2p(1-\delta_{p})(\theta-q\delta_{q})}{(p\delta_{p}-\theta)}}}\dfrac{\left( \frac{b}{2\theta}\right) ^{\frac{p\delta_{p}-q\delta_{q}}{p\delta_{p}-\theta}}}{\left(\frac{C_{p}}{2p} \right) ^{\frac{\theta-q\delta_{q}}{p\delta_{p}-\theta}}}  \right]\\
&>1,
\end{align*}
as $\alpha<\alpha_{2}$,  where $\kappa$ is defined in $\eqref{kappa}$. From the Lemma \ref{l4.2}, $f(t)$ has a global strict maxima of positive level and a local strict minima at negative level on $[0,\infty)$. 

As, $E_{u}(s)=J_{\alpha}(s\star u)=f(e^s)\geq g(\|\nabla (s\star u)\|_{2})=g(e^{s}\|\nabla u\|_{2})$, by monotonicity of composite functions and Lemma \ref{l4.3}, we have $E_{u}$ has global strict maxima of positive level and local strict minima at negative level on $(-\infty,\infty)$ and $E_{u}$ is positive if and only if  $s\in \left(\log\left( \frac{t_{0}}{\|\nabla u\|_{2}}\right), \log\left( \frac{t_{1}}{\|\nabla u\|_{2}}\right) \right).$ Also, $E_{u}$ has exactly two critical points $t^{1}_{u}<t^{3}_{u}$ such that $t^{1}_{u}\in\left(-\infty, \log\left( \frac{t_{0}}{\|\nabla u\|_{2}}\right) \right)$ is the local minima at negative level and $t^{3}_{u}\in\left(\log\left( \frac{t_{0}}{\|\nabla u\|_{2}}\right), \log\left( \frac{t_{1}}{\|\nabla u\|_{2}}\right)\right) $ is the global maxima at positive level. By Remark \ref{r1}, we have $\mathfrak{P_{\alpha}}=\{t^{1}_{u}\star u,t^{3}_{u}\star u\}$.
As $t^{1}_{u}$ is the minima of $E_{u}$, we have $$(E_{t^{1}_{u}\star u})''(0)=(E_{u})''(t^{1}_{u})\geq 0$$ but $(E_{t^{1}_{u}\star u})''(0)\neq 0$, since $\mathfrak{P}_{\alpha}^{0}=\phi$. Hence, $(E_{t^{1}_{u}\star u})''(0)> 0$ this implies $t^{1}_{u}\star u\in \mathfrak{P_{\alpha}^{+}}$. Similarly, we have $t^{3}_{u}\star u\in \mathfrak{P_{\alpha}^{-}}$. By the monotonicity behavior, $E_{u}$ has exactly  two zeros $t^{2}_{u}<t^{4}_{u}$ such that $t^{1}_{u}<t^{2}_{u}<t^{3}_{u}<t^{4}_{u}$. By using the implicit function theorem, it can be seen that $u\mapsto t^{1}_{u}$ and $u\mapsto t^{3}_{u}$ are $C^{1}$ functions.
\QED\end{proof}
For any $r>0$, define the set 
$$\Upsilon_{r}=\{u\in S_{c}:\|\nabla u\|_{2}<r\}$$ and
$\overline{\Upsilon_{r}}$ denotes the closure of $\Upsilon_{r}$.

Set $m(c,\alpha)=\inf\limits_{u\in \Upsilon_{t_{0}}}J_{\alpha}(u)$, where $t_{0}$ is as obtained in Lemma \ref{l4.3}.

\begin{lemma}\cite[Lemma 4.7]{li2022normalized}\label{l3.5}
Let $2_{\mu,*}<q<A^{*},  B^{*}<p\leq2_{\mu}^{*}$ and $0<\alpha<\min\{\alpha_{1},\alpha_{2}\}$. The set $\mathfrak{P_{\alpha}^{+}}\subseteq \Upsilon_{t_{0}}$ and $\sup\limits_{\mathfrak{P_{\alpha}^{+}}}J_{\alpha}\leq0\leq\inf\limits_{\mathfrak{P_{\alpha}^{-}}}J_{\alpha}.$ Moreover, $$-\infty<m(c,\alpha)=\inf\limits_{\mathfrak{P_{\alpha}}}J_{\alpha}=\inf\limits_{\mathfrak{P_{\alpha}^{+}}}J_{\alpha}<0$$ 
and 
there exists $k>0$ independent of $c$ and $\alpha$ such that $m(c,\alpha)<\inf\limits_{\overline{\Upsilon_{t_{0}}}\backslash \Upsilon_{t_{0}-k}}J_{\alpha}$.
\end{lemma}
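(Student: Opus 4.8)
The plan is to derive the whole statement from the fibre-map description of $\mathfrak{P}_{\alpha}$ obtained in Lemma \ref{l4.4} and from the scalar bound $J_{\alpha}(u)\geq g(\|\nabla u\|_{2})$ together with the shape of $g$ recorded in Lemma \ref{l4.3}; no new compactness argument is needed here. I would proceed in four steps: (a) prove $\mathfrak{P}_{\alpha}^{+}\subseteq\Upsilon_{t_{0}}$ and identify the sign of $J_{\alpha}$ on $\mathfrak{P}_{\alpha}^{\pm}$; (b) show $-\infty<m(c,\alpha)<0$; (c) establish the chain $m(c,\alpha)=\inf_{\mathfrak{P}_{\alpha}}J_{\alpha}=\inf_{\mathfrak{P}_{\alpha}^{+}}J_{\alpha}$; (d) produce the quantitative gap near $\partial\Upsilon_{t_{0}}$.

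For step (a), fix $v\in\mathfrak{P}_{\alpha}^{+}$. Since $v=0\star v$, Remark \ref{r1} gives $(E_{v})'(0)=0$, so $0$ is one of the two critical points $t^{1}_{v}<t^{3}_{v}$ of $E_{v}$ provided by Lemma \ref{l4.4}. As $t^{3}_{v}\star v\in\mathfrak{P}_{\alpha}^{-}$ forces $(E_{v})''(t^{3}_{v})<0$ while $(E_{v})''(0)>0$, necessarily $0=t^{1}_{v}$; the location $t^{1}_{v}\in(-\infty,\log(t_{0}/\|\nabla v\|_{2}))$ established in the proof of Lemma \ref{l4.4} then forces $\|\nabla v\|_{2}<t_{0}$, i.e.\ $v\in\Upsilon_{t_{0}}$, and $J_{\alpha}(v)=J_{\alpha}(t^{1}_{v}\star v)<0$ by Lemma \ref{l4.4}(2); hence $\sup_{\mathfrak{P}_{\alpha}^{+}}J_{\alpha}\leq 0$. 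Symmetrically, if $v\in\mathfrak{P}_{\alpha}^{-}$ then $0=t^{3}_{v}$ and $J_{\alpha}(v)=J_{\alpha}(t^{3}_{v}\star v)=\max_{s\in\R}J_{\alpha}(s\star v)>0$, so $\inf_{\mathfrak{P}_{\alpha}^{-}}J_{\alpha}\geq 0$. (Here we use $\mathfrak{P}^{0}_{\alpha}=\phi$ from Lemma \ref{l4.1}, so that $\mathfrak{P}_{\alpha}=\mathfrak{P}_{\alpha}^{+}\cup\mathfrak{P}_{\alpha}^{-}$.)

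For steps (b) and (c): on $\Upsilon_{t_{0}}$ one has $J_{\alpha}(u)\geq g(\|\nabla u\|_{2})\geq\min_{[0,t_{0}]}g>-\infty$ by continuity of $g$, hence $m(c,\alpha)>-\infty$; and for any fixed $u\in S_{c}$, $t^{1}_{u}\star u\in\mathfrak{P}_{\alpha}^{+}\subseteq\Upsilon_{t_{0}}$ with $J_{\alpha}(t^{1}_{u}\star u)<0$, so $m(c,\alpha)<0$. Next, $\mathfrak{P}_{\alpha}^{+}\subseteq\Upsilon_{t_{0}}$ gives $\inf_{\mathfrak{P}_{\alpha}^{+}}J_{\alpha}\geq m(c,\alpha)$; conversely, for $u\in\Upsilon_{t_{0}}$ the value $t=0$ satisfies $\|\nabla(0\star u)\|_{2}=\|\nabla u\|_{2}\leq t_{0}$, so Lemma \ref{l4.4}(2) yields $J_{\alpha}(u)=J_{\alpha}(0\star u)\geq J_{\alpha}(t^{1}_{u}\star u)\geq\inf_{\mathfrak{P}_{\alpha}^{+}}J_{\alpha}$, whence $m(c,\alpha)\geq\inf_{\mathfrak{P}_{\alpha}^{+}}J_{\alpha}$. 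Thus $m(c,\alpha)=\inf_{\mathfrak{P}_{\alpha}^{+}}J_{\alpha}$, and since $\inf_{\mathfrak{P}_{\alpha}^{-}}J_{\alpha}\geq 0>m(c,\alpha)$ we also get $\inf_{\mathfrak{P}_{\alpha}}J_{\alpha}=\min\{\inf_{\mathfrak{P}_{\alpha}^{+}}J_{\alpha},\inf_{\mathfrak{P}_{\alpha}^{-}}J_{\alpha}\}=\inf_{\mathfrak{P}_{\alpha}^{+}}J_{\alpha}=m(c,\alpha)$.

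For step (d), which I expect to be the main obstacle: if $u\in\overline{\Upsilon_{t_{0}}}\setminus\Upsilon_{t_{0}-k}$ then $t_{0}-k\leq\|\nabla u\|_{2}\leq t_{0}$ and $J_{\alpha}(u)\geq g(\|\nabla u\|_{2})$, while $g(t_{0})=0>m(c,\alpha)$; by continuity of $g$ at $t_{0}$ there is $k>0$ with $g>m(c,\alpha)$ on $[t_{0}-k,t_{0}]$, which gives $m(c,\alpha)<\inf_{\overline{\Upsilon_{t_{0}}}\setminus\Upsilon_{t_{0}-k}}J_{\alpha}$. The delicate point is to choose such a $k$ uniformly in $c$ and $\alpha$ over the admissible ranges, since $g$, the first zero $t_{0}$ of $g$, and the level $m(c,\alpha)$ all vary with the parameters; I would settle this by using the explicit expressions for $g$, for $t_{0}$, and for the thresholds $\alpha_{1},\alpha_{2}$ to bound $g$ from below on $[t_{0}-k,t_{0}]$ by a quantity lying strictly below the interior level $m(c,\alpha)$ (for instance by comparing with the value of the reduced one-variable minimisation at its interior minimiser), following \cite[Lemma 4.7]{li2022normalized}; the residual estimates are routine and I would omit them.
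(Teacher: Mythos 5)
The paper does not actually prove this lemma; it imports it wholesale from \cite[Lemma 4.7]{li2022normalized}, so there is no internal proof to compare against. Your reconstruction is the standard argument and, as far as I can check it against the ingredients the paper does supply, it is correct: steps (a)--(c) follow cleanly from Lemma \ref{l4.1} ($\mathfrak{P}^{0}_{\alpha}=\phi$), Remark \ref{r1}, and the fibre-map classification of Lemma \ref{l4.4} --- in particular the identification $0=t^{1}_{v}$ for $v\in\mathfrak{P_{\alpha}^{+}}$ (resp.\ $0=t^{3}_{v}$ for $v\in\mathfrak{P_{\alpha}^{-}}$) together with the location $t^{1}_{v}\in\bigl(-\infty,\log(t_{0}/\|\nabla v\|_{2})\bigr)$ gives both the inclusion $\mathfrak{P_{\alpha}^{+}}\subseteq\Upsilon_{t_{0}}$ and the sign dichotomy, and the minimality property in Lemma \ref{l4.4}(2) with the admissible value $t=0$ gives the chain of equalities for $m(c,\alpha)$.

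The one place where your write-up falls short of the statement is step (d). Continuity of $g$ at its zero $t_{0}$ only produces a $k$ depending on $c$ and $\alpha$ (through $g$, $t_{0}$ and $m(c,\alpha)$), whereas the lemma asserts $k$ independent of both; you flag this but defer the estimate. To close it one has to exploit the explicit form of $g$: on $[t_{0}-k,t_{0}]$ one has $g(t)\geq g(t_{0})-k\,\sup_{[t_{0}-k,t_{0}]}|g'| = -k\,\sup|g'|$, and the constraint $\alpha<\min\{\alpha_{1},\alpha_{2}\}$ is what lets one bound $\sup|g'|$ near $t_{0}$ and bound $|m(c,\alpha)|$ from below (via the value of $g$ at its interior minimiser) in a way that makes the ratio uniform; this is precisely the computation carried out in \cite[Lemma 4.7]{li2022normalized}. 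It is worth noting, though, that in the only place this paper uses the gap (the Ekeland argument in the proof of Theorem \ref{t1}(1)), $c$ and $\alpha$ are fixed, so the non-uniform $k$ your continuity argument delivers would already suffice for the paper's purposes; the uniformity is extra strength inherited from the cited source rather than something the subsequent arguments rely on.
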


\begin{lemma}\cite[Lemma 4.8 and 4.9]{li2022normalized}
Let $2_{\mu,*}<q<A^{*},  B^{*}<p\leq2_{\mu}^{*}$ and $0<\alpha<\min\{\alpha_{1},\alpha_{2}\}$. Let us assume that $J_{\alpha}(u)<m(c,\alpha)$ then $t^{3}_{u}$ obtained in Lemma \ref{l4.4} is negative. Also, we have 
$$\breve{m}(c,\alpha)=\inf\limits_{\mathfrak{P_{\alpha}^{-}}}J_{\alpha}>0.$$
\end{lemma}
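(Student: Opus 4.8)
The plan is to read both claims off the one–dimensional fiber map $E_u(s)=J_\alpha(s\star u)$ analysed in Lemma~\ref{l4.4}, using that $\mathfrak{P}_\alpha^+\subseteq\Upsilon_{t_0}$ (Lemma~\ref{l3.5}) and the pointwise lower bound $J_\alpha(v)\ge g(\|\nabla v\|_2)$ recorded just before Lemma~\ref{l4.2}. No fresh analytic ingredient is needed; everything is already packaged in the shape of $E_u$.

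For the first claim I would fix $u\in S_c$ with $J_\alpha(u)<m(c,\alpha)$ and recall from Lemma~\ref{l4.4} that $E_u$ has exactly the two critical points $t^1_u<t^3_u$, with $t^1_u$ a strict local minimum and $t^3_u$ a strict global maximum; hence $E_u$ is decreasing on $(-\infty,t^1_u]$ and increasing on $[t^1_u,t^3_u]$, so $E_u(s)\ge E_u(t^1_u)$ for every $s\le t^3_u$. Since $t^1_u\star u\in\mathfrak{P}_\alpha^+\subseteq\Upsilon_{t_0}$, one has $E_u(t^1_u)=J_\alpha(t^1_u\star u)\ge m(c,\alpha)>J_\alpha(u)=E_u(0)$. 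If $0\le t^3_u$ held, the displayed monotonicity would force $E_u(0)\ge E_u(t^1_u)$, contradicting the previous line; therefore $t^3_u<0$.

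For the second claim I would take an arbitrary $u\in\mathfrak{P}_\alpha^-$. From $u\in\mathfrak{P}_\alpha$ and Remark~\ref{r1}, $(E_u)'(0)=0$, so $0\in\{t^1_u,t^3_u\}$; since $(E_u)''(0)<0$ and $\mathfrak{P}_\alpha^0=\emptyset$ by Lemma~\ref{l4.1}, it must be that $0=t^3_u$, i.e. $E_u$ attains its global maximum at $s=0$. Then, using Lemma~\ref{l4.4}(2), the bound $J_\alpha(s\star u)\ge g(\|\nabla(s\star u)\|_2)=g(e^s\|\nabla u\|_2)$, and the fact that $e^s\|\nabla u\|_2$ exhausts $(0,\infty)$ as $s$ ranges over $\R$ (note $\|\nabla u\|_2>0$ since $u\in S_c$), I would estimate
\[
J_\alpha(u)=E_u(0)=\max_{s\in\R}J_\alpha(s\star u)\ \ge\ \sup_{s\in\R}g\!\left(e^{s}\|\nabla u\|_2\right)=\max_{t>0}g(t)=:\gamma .
\]
By Lemma~\ref{l4.3}, $g$ has a global strict maximum of positive level, so $\gamma>0$; taking the infimum over $\mathfrak{P}_\alpha^-$ yields $\breve m(c,\alpha)\ge\gamma>0$.

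I do not expect a genuine obstacle here. The only step deserving a moment's care is the bookkeeping in the second claim: matching the sign–of–$(E_u)''(0)$ partition of $\mathfrak{P}_\alpha$ with the minimum/maximum labelling of the two fiber critical points, so that an element of $\mathfrak{P}_\alpha^-$ is pinned to the \emph{maximum} $t^3_u$ — which is exactly where $\mathfrak{P}_\alpha^0=\emptyset$ enters. Everything else is a direct transcription of the monotonicity of $E_u$ and of the inequality $J_\alpha\ge g(\|\nabla\cdot\|_2)$.
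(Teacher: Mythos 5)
Your argument is correct. Note that the paper itself gives no proof of this lemma --- it is imported verbatim from \cite[Lemmas 4.8 and 4.9]{li2022normalized} --- so there is no internal proof to compare against; judged on its own, your derivation is sound and is essentially the standard one: the first claim follows, exactly as you write, from the min--then--max structure of $E_u$ in Lemma~\ref{l4.4} combined with $t^1_u\star u\in\mathfrak{P_{\alpha}^{+}}\subseteq\Upsilon_{t_0}$ and $m(c,\alpha)=\inf_{\Upsilon_{t_0}}J_{\alpha}$, and the second from pinning $0=t^3_u$ for $u\in\mathfrak{P_{\alpha}^{-}}$ (where $\mathfrak{P}^{0}_{\alpha}=\emptyset$ from Lemma~\ref{l4.1} is indeed the right safeguard) together with $J_{\alpha}(s\star u)\ge g(e^{s}\|\nabla u\|_2)$ and the strictly positive maximum of $g$ from Lemma~\ref{l4.3}.
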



\textbf{Proof of the Theorem \ref{t1} (1): } Consider the minimizing sequence $\{w_{n}\}$ for $m(c,\alpha)$, we can assume that $\{w_{n}\}\subset S_{c,r}$ is radially decreasing sequence, if not then we can replace $|w_{n}|$ by $|w_{n}|^{*}$ ($|w_{n}|^{*}$ is the symmetric rearrangement of $|w_{n}|$) such that $J_{\alpha}(|w_{n}|^{*})\leq J_{\alpha}(|w_{n}|)$. By Lemma \ref{l4.4}, we have $t^{1}_{w_{n}}\star w_{n}\in \mathfrak{P_{\alpha}^{+}}$, $\|\nabla (t^{1}_{w_{n}}\star w_{n})\|_{2}\leq t_{0}$ and
$$J_{\alpha}(t^{1}_{w_{n}}\star w_{n})=\min\{J_{\alpha}(t\star w_{n}):t\in\R \text{ and } \|\nabla (t\star w_{n})\|_{2}\leq t_{0}\}\leq J_{\alpha}(w_n).$$ Hence, corresponding to the radially decreasing sequence $\{w_n\}$ we get the new minimizing sequence $\{v_{n}=t^{1}_{w_{n}}\star w_{n}\}$ for $m(c,\alpha)$ such that $v_{n}\in S_{c,r}\cap \mathfrak{P_{\alpha}^{+}}$ and  $P_{\alpha}(v_{n})=0$ for all $n\in \N$. By Lemma \ref{l3.5}, we obtain $\|\nabla v_{n}\|_{2}<t_{0}-k$ for all $n\in \N$ and for some $k>0$. Hence, by Ekeland’s variational principle, there exists a minimizing sequence $\{u_n\}\subset \Upsilon_{t_{0}}$ for  $m(c,\alpha)$ such that $\lim\limits_{n\rightarrow \infty}\|u_{n}-v_{n}\|= 0$ and it is a Palais Smale for $J_{\alpha}|_{S_{c}}$. The condition $\lim\limits_{n\rightarrow \infty}\|u_{n}-v_{n}\|= 0$ together with the boundedness of $\{v_{n}\}$ implies that $\|\nabla u_{n}\|_{2}<t_{0}-k$  and $\lim\limits_{n\rightarrow \infty}P_{\alpha}(u_{n})=0$. Therefore, applying Lemma \ref{l1} to the sequence $\{u_{n}\}$, there exists $u_{c,\alpha,loc}\in S_{c}$ such that $u_{n}\rightarrow u_{c,\alpha,loc}$ strongly (upto a subsequence) in $H^{1}(\R^{N})$ and $u_{c,\alpha,loc}$ is the radial solution of \eqref{1.1} for some $\lambda<0.$ It can be easily verify that $u_{c,\alpha,loc}$ is non-negative; hence, by strong maximum principle, $u_{c,\alpha,loc}$ is positive.

\indent It is clear that $u_{c,\alpha,loc}$ is the minimizer for  $m(c,\alpha)$ on the set $\Upsilon_{t_{0}}$.
By Remark \ref{r1}, set of all critical points for the $J_{\alpha}|_{S_{c}}$ is subset of $\mathfrak{P_{\alpha}}$ and by Lemma \ref{l3.5}, we have $J_{\alpha}(u_{c,\alpha,loc})=m(c,\alpha)=\inf\limits_{\mathfrak{P_{\alpha}}}J_{\alpha}$, thus $u_{c,\alpha,loc}$ is the ground state solution.

\indent Next, we will prove that any ground state solution for $J_{\alpha}|_{S_{c}}$ is a local minimizer of $J_{\alpha}$  on the set $\Upsilon_{t_{0}}$. Let $u$ be any ground state solution for $J_{\alpha}|_{S_{c}}$ then by Lemma \ref{l3.5}, we have $J_{\alpha}(u)=m(c,\alpha)=\inf\limits_{\mathfrak{P_{\alpha}}}J_{\alpha}<0< \inf\limits_{\mathfrak{P_{\alpha}^{-}}}J_{\alpha}$ which implies $u\in \mathfrak{P_{\alpha}^{+}}$. Again, by Lemma \ref{l3.5}, we have  $\mathfrak{P_{\alpha}^{+}}\subseteq \Upsilon_{t_{0}}$, thus $u$ is a local  minimizer of $J_{\alpha}$  on the set $\Upsilon_{t_{0}}$.

\textbf{Proof of the Theorem \ref{t1} (2): } Denote, $$J_{\alpha}^{\rho}=\{u\in S_{c}:J_{\alpha}(u)\leq \rho\}\cdot$$ 

Define the auxiliary $C^{1}$ function $\widehat{J}_{\alpha}(s,u):\R\times H^{1}(\R^{N})\rightarrow\R$ such that $$\widehat{J}_{\alpha}(s,u)=J_{\alpha}(s\star u)=\frac{a}{2}e^{2s}\|\nabla u\|^{2}_{2}+\frac{b}{2\theta}e^{2\theta s}\|\nabla u\|^{2\theta}_{2}-\frac{\alpha}{2q}e^{2\delta_{q}qs}\int_{\R^{N}}(I_{\mu}\ast|u|^{q})|u|^{q}-\frac{1}{2p}e^{2\delta_{p}ps}\int_{\R^{N}}(I_{\mu}\ast|u|^{p})|u|^{p}.$$ By \cite[Theorem 1.28]{willem2012minimax}, all the critical points of $\widehat{J}_{\alpha}|_{S_{c,r}}$ are also critical pints of $\widehat{J}_{\alpha}|_{S_{c}}$. 

Set,
$$\Gamma=\left\lbrace \gamma(t)=\left(\kappa(t),\xi(t)\right)\in C([0,1],\R\times S_{c,r}) ;\gamma(0)\in(0,\mathfrak{P_{\alpha}^{+}}),\gamma(1)\in(0,J_{\alpha}^{2m(c,\alpha)})\right\rbrace\cdot$$ 

By Lemma \ref{l4.4}, the minimax value 
$$\varsigma(c,\alpha)=\inf_{\gamma \in\Gamma}\max_{(s,u)\in\Gamma([0,1])}\widehat{J}_{\alpha}(s,u)$$ is a real number. As proved in the \cite[Theorem 1.1 (ii)]{li2022normalized}, we have
$$\varsigma(c,\alpha)=\inf_{\mathfrak{P_{\alpha}^{-}}\cap S_{c,r}}J_{\alpha}>0\geq \sup_{(\mathfrak{P_{\alpha}^{+}}\cup J_{\alpha}^{2m(c,\alpha)})\cap S_{c,r}}J_{\alpha}=\sup_{\left( (0,\mathfrak{P_{\alpha}^{+}})\cup (0,J_{\alpha}^{2m(c,\alpha)})\right) \cap (\R\times S_{c,r})}\widehat{J}_{\alpha}\cdot$$

Let $\gamma_{n}(t)=\left(\kappa_{n}(t),\xi_{n}(t)\right)$ be any minimizing sequence for $\varsigma(c,\alpha)$ such that $\kappa_{n}(t)=0$ and $\xi_{n}(t)\geq 0$ a.e. in $\R^{N}$, for all $t\in[0,1]$. With the help of $\gamma_{n}$, by using the standard arguments and as proved in \cite[Theorem 1.1 (ii)]{li2022normalized}, there exists a Palais-Smale sequence $\{u_{n}=r_{n}\star v_{n}\}\subset S_{c,r}$ for $J_{\alpha}|_{S_{c,r}}$ at level $\varsigma(c,\alpha)>0$ such that $P_{\alpha}(r_{n}\star v_{n})\rightarrow 0$. Thus, from the Lemma \ref{l1}, there exists $u_{c,\alpha,m}\in S_{c,r}$ such that  $u_{n}\rightarrow u_{c,\alpha,m}$ strongly (upto a subsequence) in $H^{1}(\R^{N})$. Moreover, $u_{c,\alpha,m}$ is the non-negative radial solution of \eqref{1.1} for some $\lambda_{c,\alpha,m}<0.$ By strong maximum principle, $u_{c,\alpha,loc}$ is positive.
\QED

To check the asymptotic behaviour of $m(c,\alpha)$ and $\varsigma(c,\alpha)$, we first discuss Problem \eqref{1.1} with $\alpha=0$.

\begin{lemma}\label{mu_zero1}
Let  $B^{*}<p<2_{\mu}^{*}$ and $\alpha=0$. Then $\mathfrak{P}^{0}_{\alpha}=\phi$ and $\mathfrak{P_{\alpha}}$ is a smooth manifold of codimension $2$ in $H^{1}(\R^{N})$. 
\end{lemma}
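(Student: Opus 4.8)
The plan is to repeat, in the degenerate case $\alpha=0$, the argument used for Lemma \ref{l4.1}; it becomes noticeably simpler here, since the competing $L^{2}$-subcritical Choquard term is absent, so no smallness hypothesis on a parameter is needed and the sign analysis closes at once. Write $P_{0}$, $E_{u}$, $\mathfrak{P}_{0}$, $\mathfrak{P}^{0}_{0}$ for the objects of Section 2 evaluated at $\alpha=0$, so that for $u\in S_{c}$,
\begin{align*}
P_{0}(u)&=a\|\nabla u\|^{2}_{2}+b\|\nabla u\|^{2\theta}_{2}-\delta_{p}\int_{\R^{N}}(I_{\mu}\ast|u|^{p})|u|^{p}=(E_{u})'(0),\\
(E_{u})''(0)&=2a\|\nabla u\|^{2}_{2}+2\theta b\|\nabla u\|^{2\theta}_{2}-2p\delta_{p}^{2}\int_{\R^{N}}(I_{\mu}\ast|u|^{p})|u|^{p},
\end{align*}
and $\mathfrak{P}^{0}_{0}=\{u\in\mathfrak{P}_{0}:(E_{u})''(0)=0\}$.

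First I would show $\mathfrak{P}^{0}_{0}=\phi$. Arguing by contradiction, suppose $u\in\mathfrak{P}^{0}_{0}$, that is, $P_{0}(u)=0$ and $(E_{u})''(0)=0$. Since $u\in S_{c}$ with $c>0$, we have $u\not\equiv0$, hence $\|\nabla u\|_{2}>0$; moreover $P_{0}(u)=0$ gives $\int_{\R^{N}}(I_{\mu}\ast|u|^{p})|u|^{p}=\delta_{p}^{-1}\big(a\|\nabla u\|^{2}_{2}+b\|\nabla u\|^{2\theta}_{2}\big)>0$. Eliminating $\int_{\R^{N}}(I_{\mu}\ast|u|^{p})|u|^{p}$ between $P_{0}(u)=0$ and $(E_{u})''(0)=0$ yields
\[
a(1-p\delta_{p})\|\nabla u\|^{2}_{2}+b(\theta-p\delta_{p})\|\nabla u\|^{2\theta}_{2}=0 .
\]
By Remark \ref{rem1}, the hypothesis $p>B^{*}$ forces $p\delta_{p}>\theta>1$, so both coefficients $1-p\delta_{p}$ and $\theta-p\delta_{p}$ are strictly negative; as $a,b>0$ and $\|\nabla u\|_{2}>0$, the left-hand side is strictly negative, a contradiction. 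Hence $\mathfrak{P}^{0}_{0}=\phi$.

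With $\mathfrak{P}^{0}_{0}=\phi$ established, the codimension-$2$ assertion would follow exactly as in Lemma \ref{l4.1}, along the lines of \cite[Lemma 5.2]{soave2020normalized1}: one views $\mathfrak{P}_{0}$ as the zero set of $\Phi(u)=\big(\|u\|^{2}_{2}-c^{2},\,P_{0}(u)\big)$ on $H^{1}(\R^{N})$ and checks that $(0,0)$ is a regular value. For $u\in\mathfrak{P}_{0}$, evaluating $d\Phi(u)$ along $u$ produces a vector of the form $(2c^{2},\ast)$ with $c\neq0$, while evaluating it along the scaling direction $\frac{d}{ds}\big|_{s=0}(s\star u)$ produces $\big(0,\frac{d}{ds}\big|_{s=0}P_{0}(s\star u)\big)=\big(0,(E_{u})''(0)\big)$, which is nonzero by the previous step; these two vectors span $\R^{2}$, so $d\Phi(u)$ is onto and the implicit function theorem applies. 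I expect the only genuine point of care to be the sign bookkeeping in the first step: it is precisely the standing assumption $p>B^{*}$ (equivalently $p\delta_{p}>\theta$) that makes \emph{both} coefficients in the displayed identity have the same, negative, sign, which is what forces the contradiction; the manifold part is then a routine regular-value argument, identical in spirit to the $\alpha>0$ case already treated in Lemma \ref{l4.1}.
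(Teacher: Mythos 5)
Your proposal is correct and matches the paper's intent exactly: the paper's proof of this lemma simply defers to Lemma \ref{l4.1}, and your argument is precisely that proof specialized to $\alpha=0$, where the analogue of relation \eqref{l1.15} has vanishing right-hand side and so $a(p\delta_{p}-1)\|\nabla u\|^{2}_{2}+b(p\delta_{p}-\theta)\|\nabla u\|^{2\theta}_{2}=0$ contradicts $p\delta_{p}>\theta>1$ immediately, with no Gagliardo--Nirenberg estimate or smallness condition needed. The regular-value argument for the codimension-$2$ claim is likewise the same one the paper borrows from \cite[Lemma 5.2]{soave2020normalized1}.
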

\begin{proof}
The proof is similar to the proof of the Lemma \ref{l4.1}.
\QED\end{proof}
\begin{lemma}\label{mu_zero2}
Let $B^{*}<p<2_{\mu}^{*}$ and $\alpha=0$. For any $u\in S_{c}$, the function $E_{u}$ has a unique maxima, say $t^{*}_{u}$ of positive level. Moreover, 
\begin{enumerate}
\item $E_{u}$ is strictly decreasing and concave on $(t^{*}_{u},\infty)$.
	\item $\mathfrak{P_{\alpha}}=\mathfrak{P_{\alpha}^{-}}$. Also, if $P_{\mu}(u)<0$, then $t^{*}_{u}<0$.
	\item The map $u\in S_{c}\mapsto t^{*}_{u}\in \R$ is a $C^{1}$ function.
\end{enumerate}
\end{lemma}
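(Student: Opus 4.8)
\emph{Proof strategy.} The plan is to analyse the fibre map $E_u(s)=J_0(s\star u)$ directly. With $\alpha=0$,
\[
E_u(s)=\frac{ae^{2s}}{2}\|\nabla u\|_2^2+\frac{be^{2\theta s}}{2\theta}\|\nabla u\|_2^{2\theta}-\frac{e^{2\delta_p p s}}{2p}\int_{\R^N}(I_\mu\ast|u|^p)|u|^p ,
\]
and since $B^*<p<2_\mu^*$, Remark~\ref{rem1} gives $p\delta_p>\theta>1$, so the exponents of the three exponentials satisfy $2<2\theta<2p\delta_p$. First I would factor $(E_u)'(s)=e^{2s}G_u(s)$ with
\[
G_u(s):=a\|\nabla u\|_2^2+b\,e^{2(\theta-1)s}\|\nabla u\|_2^{2\theta}-\delta_p\,e^{2(p\delta_p-1)s}\int_{\R^N}(I_\mu\ast|u|^p)|u|^p ,
\]
so that the sign of $(E_u)'(s)$ equals that of $G_u(s)$. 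A short computation gives $G_u'(s)=2(\theta-1)e^{2(\theta-1)s}\big[\,b\|\nabla u\|_2^{2\theta}-\tfrac{(p\delta_p-1)\delta_p}{\theta-1}e^{2(p\delta_p-\theta)s}\int_{\R^N}(I_\mu\ast|u|^p)|u|^p\,\big]$; the bracket is strictly decreasing in $s$ (as $p\delta_p>\theta$) and runs from $b\|\nabla u\|_2^{2\theta}>0$ down to $-\infty$, so $G_u$ is strictly increasing and then strictly decreasing. Since $G_u(-\infty)=a\|\nabla u\|_2^2>0$ and $G_u(+\infty)=-\infty$, it has exactly one zero $t^*_u$, with $G_u>0$ on $(-\infty,t^*_u)$ and $G_u<0$ on $(t^*_u,\infty)$. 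Hence $E_u$ is strictly increasing on $(-\infty,t^*_u)$ and strictly decreasing on $(t^*_u,\infty)$, so $t^*_u$ is its unique maximum point; and since $E_u(s)=\tfrac a2 e^{2s}\|\nabla u\|_2^2\big(1+o(1)\big)>0$ for $s$ very negative, the maximum value $E_u(t^*_u)$ is positive.

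For the concavity assertion in item~(1), I would write $(E_u)''(s)=2e^{2s}H_u(s)$ with
\[
H_u(s):=a\|\nabla u\|_2^2+\theta b\,e^{2(\theta-1)s}\|\nabla u\|_2^{2\theta}-p\delta_p^2\,e^{2(p\delta_p-1)s}\int_{\R^N}(I_\mu\ast|u|^p)|u|^p .
\]
For $s\in(t^*_u,\infty)$ the monotonicity just established gives $G_u(s)<0$, i.e.
\[
\delta_p\,e^{2(p\delta_p-1)s}\int_{\R^N}(I_\mu\ast|u|^p)|u|^p>a\|\nabla u\|_2^2+b\,e^{2(\theta-1)s}\|\nabla u\|_2^{2\theta};
\]
multiplying by $p\delta_p>0$ and inserting this bound into $H_u(s)$ yields
\[
H_u(s)<(1-p\delta_p)\,a\|\nabla u\|_2^2+(\theta-p\delta_p)\,b\,e^{2(\theta-1)s}\|\nabla u\|_2^{2\theta}<0,
\]
because $p\delta_p>\theta>1$ and $\|\nabla u\|_2>0$. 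Thus $(E_u)''<0$ on $(t^*_u,\infty)$, so $E_u$ is strictly decreasing and strictly concave there, which proves item~(1).

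Items~(2) and~(3) then follow quickly. Since $P_0(u)=(E_u)'(0)$, Remark~\ref{r1} together with the uniqueness of the critical point of $E_u$ gives: $u\in\mathfrak{P}_\alpha$ iff $(E_u)'(0)=0$ iff $t^*_u=0$, and in that case $(E_u)''(0)=(E_u)''(t^*_u)<0$, so $u\in\mathfrak{P}_\alpha^-$; the inclusion $\mathfrak{P}_\alpha^-\subseteq\mathfrak{P}_\alpha$ is trivial, hence $\mathfrak{P}_\alpha=\mathfrak{P}_\alpha^-$ (and $\mathfrak{P}_\alpha^0=\emptyset$, consistent with Lemma~\ref{mu_zero1}). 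If $P_0(u)<0$, then $(E_u)'(0)<0$, so $0$ lies in the interval $(t^*_u,\infty)$ on which $E_u$ decreases, i.e. $t^*_u<0$. For item~(3), the map $\Psi(u,s):=(E_u)'(s)$ is $C^1$ on $S_c\times\R$ — it is a polynomial in $e^s$ whose coefficients $\|\nabla u\|_2^2$, $\|\nabla u\|_2^{2\theta}$ and $\int_{\R^N}(I_\mu\ast|u|^p)|u|^p$ are $C^1$ in $u$ — and $\partial_s\Psi(u,t^*_u)=(E_u)''(t^*_u)<0\neq0$; the implicit function theorem, applied in a local chart of the $C^1$ manifold $S_c$, produces a $C^1$ map $u\mapsto\sigma(u)$ with $\Psi(u,\sigma(u))=0$, and by the uniqueness of the critical point $\sigma(u)=t^*_u$, so $u\mapsto t^*_u$ is $C^1$.

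The one genuinely delicate point is the concavity on the \emph{unbounded} interval $(t^*_u,\infty)$: rather than pinning down the sign of $(E_u)''$ from its explicit expression (which is not immediate), I would exploit the first-order information $(E_u)'(s)<0$ valid there, which makes the estimate for $H_u$ collapse onto the single structural inequality $p\delta_p>\theta>1$ supplied by Remark~\ref{rem1}. Everything else is elementary one-variable calculus for $E_u$, together with the facts already recorded in Lemma~\ref{mu_zero1}.
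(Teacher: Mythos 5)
Your proof is correct and follows the standard fiber-map analysis that the paper itself does not write out but delegates to \cite[Lemma 6.1]{soave2020normalized2}; you supply the extra detail the Kirchhoff term forces, namely that $G_u$ is no longer monotone (it is increasing then decreasing because of the $b\,e^{2(\theta-1)s}\|\nabla u\|_2^{2\theta}$ term), and your sign analysis together with the comparison $p\delta_p>\theta>1$ handles this cleanly. One cosmetic point: in item (2) you use $(E_u)''(t^*_u)<0$, whereas item (1) only establishes $H_u<0$ on the open interval $(t^*_u,\infty)$; the same substitution with the equality $G_u(t^*_u)=0$ gives $H_u(t^*_u)=(1-p\delta_p)a\|\nabla u\|_2^2+(\theta-p\delta_p)b\,e^{2(\theta-1)t^*_u}\|\nabla u\|_2^{2\theta}<0$, so the claim holds, but it deserves the one extra line.
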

\begin{proof}
 Proceeding as the proof of \cite[Lemma 6.1]{soave2020normalized2}, we can easily obtain (1)-(3).
\QED\end{proof}

\begin{lemma}\label{mu_zero3}
Let  $B^{*}<p< 2_{\mu}^{*}$ and $\alpha=0$. Then $m(c,0)=\inf\limits_{u\in\mathfrak{P_{0}}}J_{0}(u)>0$. Moreover,
	$$0<\sup\limits_{u\in\overline{\Upsilon_{r}}}J_{0}(u)<m(c,0),$$
where $\Upsilon_{r}=\{u\in S_{c}:\|\nabla u\|_{2}^{2}<r\}$ for some sufficiently small $r>0$. Also, if $u\in\overline{\Upsilon_{r}}$ then $J_{0}(u)>0, \ P_{0}(u)>0.$
\end{lemma}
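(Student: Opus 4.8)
The approach rests on a simple scaling dichotomy that is available when $\alpha=0$: in
\[
J_0(u)=\frac a2\|\nabla u\|_2^2+\frac b{2\theta}\|\nabla u\|_2^{2\theta}-\frac1{2p}\int_{\R^N}(I_\mu\ast|u|^p)|u|^p,\qquad
P_0(u)=a\|\nabla u\|_2^2+b\|\nabla u\|_2^{2\theta}-\delta_p\int_{\R^N}(I_\mu\ast|u|^p)|u|^p,
\]
the two kinetic terms carry the powers $2$ and $2\theta$ of $\|\nabla u\|_2$, while on $S_c$ the Gagliardo--Nirenberg inequality \eqref{GN1} bounds the Choquard term by $C_pc^{2p(1-\delta_p)}\|\nabla u\|_2^{2p\delta_p}$, and $p>B^\ast$ forces $p\delta_p>\theta>1$ by Remark \ref{rem1}. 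Thus near $\|\nabla u\|_2=0$ the positive quadratic part dominates, while on $\mathfrak P_0$ the identity $P_0(u)=0$ can only hold at a definite distance from the origin; these two facts give, respectively, the statements on $\overline{\Upsilon_r}$ and the lower bound on $m(c,0)$.

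First I would treat $\overline{\Upsilon_r}$. For $u\in S_c$ with $\|\nabla u\|_2^2\le r$, dropping the positive $\frac b{2\theta}$-term and applying \eqref{GN1},
\[
J_0(u)\ \ge\ \|\nabla u\|_2^2\Big(\tfrac a2-\tfrac{C_pc^{2p(1-\delta_p)}}{2p}\,r^{\,p\delta_p-1}\Big),\qquad
P_0(u)\ \ge\ \|\nabla u\|_2^2\Big(a-\delta_pC_pc^{2p(1-\delta_p)}\,r^{\,p\delta_p-1}\Big),
\]
and since $p\delta_p-1>0$, both brackets are positive once $r$ is small, so $J_0(u)>0$ and $P_0(u)>0$ on all of $\overline{\Upsilon_r}$. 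Moreover $\Upsilon_r\ne\emptyset$, since it contains $s\star w$ for any fixed $w\in S_c$ and $s$ sufficiently negative, whence $\sup_{\overline{\Upsilon_r}}J_0>0$; and as the Choquard term is nonnegative, $J_0(u)\le\frac a2 r+\frac b{2\theta}r^\theta$ on $\overline{\Upsilon_r}$, so this supremum is finite.

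Next I would show $m(c,0)>0$. If $u\in\mathfrak P_0$ then $\delta_p\int_{\R^N}(I_\mu\ast|u|^p)|u|^p=a\|\nabla u\|_2^2+b\|\nabla u\|_2^{2\theta}$, and combining with \eqref{GN1} gives $a\|\nabla u\|_2^2\le\delta_pC_pc^{2p(1-\delta_p)}\|\nabla u\|_2^{2p\delta_p}$; since $\|\nabla u\|_2>0$ and $p\delta_p>1$, this yields the uniform bound $\|\nabla u\|_2\ge\rho_0:=\big(\tfrac{a}{\delta_pC_pc^{2p(1-\delta_p)}}\big)^{1/(2p\delta_p-2)}>0$ on $\mathfrak P_0$. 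Substituting the value of the Choquard term from $P_0(u)=0$ back into $J_0$,
\[
J_0(u)=a\Big(\tfrac12-\tfrac1{2p\delta_p}\Big)\|\nabla u\|_2^2+b\Big(\tfrac1{2\theta}-\tfrac1{2p\delta_p}\Big)\|\nabla u\|_2^{2\theta}\ \ge\ a\Big(\tfrac12-\tfrac1{2p\delta_p}\Big)\rho_0^2\ >\ 0,
\]
both coefficients being strictly positive because $p\delta_p>\theta>1$. Taking the infimum over $\mathfrak P_0$ gives $m(c,0)>0$.

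Finally, to obtain the strict inequality I would shrink $r$ once more: keeping the smallness already required and noting $\sup_{\overline{\Upsilon_r}}J_0\le\frac a2 r+\frac b{2\theta}r^\theta\to0^+$ as $r\to0^+$ while $m(c,0)>0$ is a fixed number, a small enough $r$ gives $0<\sup_{\overline{\Upsilon_r}}J_0<m(c,0)$. I do not expect a genuine obstacle here; the only delicate point is keeping the several smallness thresholds on $r$ (positivity of the two brackets above, and $\frac a2 r+\frac b{2\theta}r^\theta<m(c,0)$) simultaneously satisfiable, together with a consistent appeal to Remark \ref{rem1} for the strict inequality $p\delta_p>\theta$, which is what drives both the lower bound on $\mathfrak P_0$ and the positivity of the coefficients in the Poho\v zaev-reduced form of $J_0$.
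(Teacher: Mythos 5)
Your proposal is correct and follows essentially the same route as the paper: a Gagliardo--Nirenberg comparison showing $\|\nabla u\|_2$ is bounded away from zero on $\mathfrak{P}_0$ (using $p\delta_p>\theta>1$), substitution of the Poho\v{z}aev identity into $J_0$ to get a positive lower bound for $m(c,0)$, positivity of $J_0$ and $P_0$ on $\overline{\Upsilon_r}$ for small $r$ via \eqref{GN1}, and the bound $J_0(u)\le \frac a2\|\nabla u\|_2^2+\frac b{2\theta}\|\nabla u\|_2^{2\theta}\to 0$ to place $\sup_{\overline{\Upsilon_r}}J_0$ strictly below $m(c,0)$. The only cosmetic difference is which kinetic term you drop in the small-$r$ estimates (you keep the $a$-term where the paper keeps the $b$-term), which changes nothing.
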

\begin{proof}
Let $u\in \mathfrak{P_{0}}$ be any arbitrary element. Then 
$$P_{0}(u)=a\|\nabla u\|^{2}_{2}+b\|\nabla u\|^{2\theta}_{2}=\delta_{p}\int_{\R^{N}}(I_{\mu}\ast|u|^{p})|u|^{p}\leq \delta_{p} C_{p}\|\nabla u\|_{2}^{2p\delta_{p}}c^{2p(1-\delta_{p})},$$
which implies $\|\nabla u\|_{2}\geq c_4>0$, for some $c_4>0$, since $p\delta_{p}>\theta>1$. Now, consider
\begin{align*}
m(c,0)=\inf\limits_{u\in\mathfrak{P_{0}}}J_{0}(u)&=\inf\limits_{u\in\mathfrak{P_{0}}}\left\lbrace  \frac{a}{2}\|\nabla u\|^{2}_{2}+\frac{b}{2\theta}\|\nabla u\|^{2\theta}_{2}-\frac{1}{2p}\int_{\R^{N}}(I_{\mu}\ast|u|^{p})|u|^{p}\right\rbrace\\
&=\inf\limits_{u\in\mathfrak{P_{0}}}\left\lbrace  \left( \frac{a}{2}-\frac{a}{2p\delta_{p}}\right) \|\nabla u\|^{2}_{2}+\left( \frac{b}{2\theta}-\frac{b}{2p\delta_{p}}\right)\|\nabla u\|^{2\theta}_{2}\right\rbrace\geq c_{5},
\end{align*}
for some $c_{5}>0$. Hence, $m(c,0)=\inf\limits_{u\in\mathfrak{P_{0}}}J_{0}(u)>0$.

Consider,
$$J_{0}(u)\geq \frac{b}{2\theta}\|\nabla u\|^{2\theta}_{2}-\frac{1}{2p}\int_{\R^{N}}(I_{\mu}\ast|u|^{p})|u|^{p}\geq \frac{b}{2\theta}\|\nabla u\|^{2\theta}_{2}-\frac{1}{2p}C_{p}\|\nabla u\|_{2}^{2p\delta_{p}}c^{2p(1-\delta_{p})}$$
and 
$$P_{0}(u)\geq b\|\nabla u\|^{2\theta}_{2}-\delta_{p}\int_{\R^{N}}(I_{\mu}\ast|u|^{p})|u|^{p}\geq b\|\nabla u\|^{2\theta}_{2} -\delta_{p} C_{p}\|\nabla u\|_{2}^{2p\delta_{p}}c^{2p(1-\delta_{p})}.$$
Note that, for any $u\in\overline{\Upsilon_{r}}$ for some sufficiently small $r>0$, we have 
	$\sup\limits_{u\in\overline{\Upsilon_{r}}}J_{0}(u)>0$ and
 $J_{0}(u)>0, \ P_{0}(u)>0.$ 
$$J_{0}(u)\leq\frac{a}{2}\|\nabla u\|^{2}_{2}+\frac{b}{2\theta}\|\nabla u\|^{2\theta}_{2}<m(c,0), \ \forall u\in \overline{\Upsilon_{r}},$$
 this completes the proof.
\QED\end{proof}

\begin{lemma}
Let  $B^{*}<p< 2_{\mu}^{*}$ and $\alpha=0$. Then there exists a positive radial critical point for $J_{\alpha}|_{S_{c}}$, say, $u_0$ such that
$$0<m_r(c,0)=m(c,0):=\inf_{\mathfrak{P}^{0}_{\alpha}}J_0(u)=J_{0}(u_0).$$
\end{lemma}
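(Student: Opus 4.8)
The plan is to characterize $m(c,0)$ as a minimax level over $\mathfrak{P_{0}}$, produce a Palais--Smale sequence lying on $\mathfrak{P_{0}}$, and recover compactness from (the $\alpha=0$ version of) Lemma \ref{l1}. First I would record the equivalent descriptions of the level. By Lemma \ref{mu_zero2}, for every $u\in S_{c}$ the fiber map $E_{u}$ has a unique critical point $t^{*}_{u}$, which is its global maximum, and $t^{*}_{u}\star u$ is the unique point of $\mathfrak{P_{0}}$ on the ray $\{t\star u:t\in\R\}$; hence
\begin{equation*}
m(c,0)=\inf_{u\in\mathfrak{P_{0}}}J_{0}(u)=\inf_{u\in S_{c}}\max_{t\in\R}J_{0}(t\star u),
\end{equation*}
and similarly $m_{r}(c,0)=\inf_{u\in S_{c,r}}\max_{t\in\R}J_{0}(t\star u)$. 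The equality $m_{r}(c,0)=m(c,0)$ then follows from Schwarz symmetrization: for $u\in S_{c}$, taking $u\ge 0$ without loss, one has $u^{*}\in S_{c,r}$, $(t\star u)^{*}=t\star(u^{*})$, and by the P\'olya--Szeg\H{o} inequality together with the Riesz rearrangement inequality $J_{0}((t\star u)^{*})\le J_{0}(t\star u)$ for every $t$ (the two gradient terms decrease and the Choquard term, which is subtracted, increases); taking maxima over $t$ and then infima gives $m_{r}(c,0)\le m(c,0)$, and the reverse inequality is trivial. By Lemma \ref{mu_zero3}, $m(c,0)>0$.

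Next I would construct the minimizing Palais--Smale sequence. Pick a minimizing sequence $\{v_{n}\}\subset S_{c}$ for $\inf_{u\in S_{c}}\max_{t}J_{0}(t\star u)$; replacing $v_{n}$ by $t^{*}_{v_{n}^{*}}\star v_{n}^{*}$ we may assume $v_{n}\in\mathfrak{P_{0}}\cap S_{c,r}$, radially decreasing, with $J_{0}(v_{n})\to m(c,0)$. Since $\mathfrak{P}^{0}_{0}=\emptyset$ and $\mathfrak{P_{0}}$ is a smooth manifold of codimension $2$ (Lemma \ref{mu_zero1}), Ekeland's variational principle applied to $J_{0}$ constrained to the manifold $\mathfrak{P_{0}}\cap S_{c,r}$ yields a sequence $\{u_{n}\}\subset\mathfrak{P_{0}}\cap S_{c,r}$ with $J_{0}(u_{n})\to m(c,0)$ which is Palais--Smale for $J_{0}|_{\mathfrak{P_{0}}\cap S_{c,r}}$. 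Arguing exactly as in Lemma \ref{l4.1} (the Lagrange multiplier attached to the constraint $P_{0}=0$ must vanish because $\mathfrak{P}^{0}_{0}=\emptyset$), $\{u_{n}\}$ is in fact Palais--Smale for $J_{0}|_{S_{c,r}}$, hence for $J_{0}|_{S_{c}}$, at level $m(c,0)>0$, and $P_{0}(u_{n})=0$ for all $n$. Finally I would invoke Lemma \ref{l1}, whose proof goes through verbatim at $\alpha=0$ (the $q$-term is simply absent, and the needed inequalities $p\delta_{p}>\theta>1$, $\delta_{p}<1$ still hold): since $m(c,0)\neq 0$ and $P_{0}(u_{n})\to 0$, up to a subsequence $u_{n}\to u_{0}$ strongly in $H^{1}(\R^{N})$, $u_{0}\in S_{c}$, and $u_{0}$ is a radial solution of \eqref{1.1} with $\alpha=0$ for some $\lambda<0$, with $J_{0}(u_{0})=m(c,0)$. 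As in the proof of Theorem \ref{t1}(1), $u_{0}$ is nonnegative and radially decreasing, so the strong maximum principle gives $u_{0}>0$. Since every critical point of $J_{0}|_{S_{c}}$ lies on $\mathfrak{P_{0}}$, this $u_{0}$ achieves $\inf_{\mathfrak{P_{0}}}J_{0}=m(c,0)=m_{r}(c,0)$, proving the claim.

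The step I expect to be the main obstacle is the rigorous production of the Palais--Smale sequence at the Pohozaev/minimax level and the verification that the constraint multiplier vanishes, so that it is a genuine Palais--Smale sequence for $J_{0}|_{S_{c}}$ with $P_{0}(u_{n})=0$; the symmetrization comparison, the positivity via the maximum principle, and the compactness via Lemma \ref{l1} are then routine given the preliminary lemmas \ref{mu_zero1}--\ref{mu_zero3}. A secondary technical point worth spelling out is precisely that Lemma \ref{l1}, as stated for $\alpha>0$, remains valid at $\alpha=0$, which as noted above costs nothing.
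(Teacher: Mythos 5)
Your proposal is essentially correct but follows a genuinely different route from the paper. The paper's proof is deliberately terse: it invokes Lemmas \ref{mu_zero1}--\ref{mu_zero3} and then follows the scheme of Section~7 of \cite{soave2020normalized1}, i.e.\ a mountain-pass construction for the augmented functional $\widehat{J}_{0}(s,u)=J_{0}(s\star u)$ on $\R\times S_{c,r}$, which produces a Palais--Smale sequence at the min-max level with $P_{0}(u_{n})\to 0$ built in; compactness and the identification $m_{r}(c,0)=m(c,0)$ by rearrangement then finish the argument. You instead minimize $J_{0}$ directly over the natural constraint $\mathfrak{P}_{0}\cap S_{c,r}$, use Ekeland on that manifold, and show the extra multiplier vanishes. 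Both roads lead to the same place; the paper's route avoids the natural-constraint multiplier analysis at the price of the min-max machinery, while yours is more elementary but must justify that $\mathfrak{P}_{0}$ is a natural constraint \emph{along a Palais--Smale sequence}, not just at exact critical points. Your symmetrization argument for $m_{r}(c,0)=m(c,0)$ (P\'olya--Szeg\H{o} plus Riesz rearrangement, together with $(t\star u)^{*}=t\star u^{*}$ and the unique fiber maximum from Lemma \ref{mu_zero2}) matches what the paper calls ``standard rearrangement techniques.''

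The one step that needs more care is precisely the one you flag: deducing from $J_{0}'(u_{n})-\lambda_{n}u_{n}-\chi_{n}P_{0}'(u_{n})\to 0$ that $\chi_{n}\to 0$. Pairing with the scaling direction gives $\chi_{n}(E_{u_{n}})''(0)=o(1)$, and the hypothesis $\mathfrak{P}^{0}_{0}=\emptyset$ only yields $(E_{u_{n}})''(0)\neq 0$ pointwise, which is not enough for a sequence. You should add that on $\mathfrak{P}_{0}$ one has, using $P_{0}(u)=0$,
\begin{equation*}
(E_{u})''(0)=2a(1-p\delta_{p})\|\nabla u\|_{2}^{2}+2b(\theta-p\delta_{p})\|\nabla u\|_{2}^{2\theta},
\end{equation*}
which is bounded away from $0$ from above by a negative constant because $p\delta_{p}>\theta>1$ and $\|\nabla u\|_{2}\geq c_{4}>0$ on $\mathfrak{P}_{0}$ (this lower bound is exactly what is shown at the start of Lemma \ref{mu_zero3}); together with the boundedness of $P_{0}'(u_{n})$ this gives $\chi_{n}\to 0$ and hence a genuine Palais--Smale sequence for $J_{0}|_{S_{c}}$ with $P_{0}(u_{n})=0$. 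With that addition, and the (correct) observation that Lemma \ref{l1} goes through verbatim at $\alpha=0$, your argument is complete.
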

\begin{proof}
By using Lemmas \ref{mu_zero1}, \ref{mu_zero2},  \ref{mu_zero3} and following the similar technique presented in the Section $7$ of the paper \cite{soave2020normalized1}, we get the existence of positive radial critical point $u_0$ for $J_{\alpha}|_{S_{c}}$ at a mountain pass level $\varsigma(c,\alpha)>0$ such that $\inf\limits_{\mathfrak{P}^{0}_{\alpha}\cap S_{c,r}}J_0(u)=\varsigma(c,\alpha)$. Further, by standard rearrangement techniques and Lemma \ref{mu_zero2}, we have $m_r(c,0):=\inf\limits_{\mathfrak{P}^{0}_{\alpha}\cap S_{c,r}}J_0(u)=\inf_{\mathfrak{P}^{0}_{\alpha}}J_0(u).$
\QED\end{proof}

\begin{lemma}\label{mu_zero4}
Let $2_{\mu,*}<q<A^{*},  B^{*}<p< 2_{\mu}^{*}$ and $0<\alpha<\min\{\alpha_{1},\alpha_{2}\}$. Then, we have
$$\inf_{u\in\mathfrak{P_{\alpha}^{-}}\cap S_{c,r}}J_{\alpha}(u)=\inf_{u\in S_{c,r}}\max_{s\in \R}J_{\alpha}(s\star u) \text{ and } \inf_{u\in\mathfrak{P_{0}^{-}}\cap S_{c,r}}J_{\alpha}(u)=\inf_{u\in S_{c,r}}\max_{s\in \R}J_{0}(s\star u).$$
 
Moreover, for any $0\leq\alpha_{3}<\alpha_{4}<\min\{\alpha_{1},\alpha_{2}\}$, we have $$\varsigma(c,\alpha_{4})\leq \varsigma(c,\alpha_{3})\leq m(c,0).$$
\end{lemma}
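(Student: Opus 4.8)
The plan is to establish the two minimax characterizations first and then deduce the monotonicity chain for $\varsigma(c,\alpha)$. For the first identity, fix $u\in S_{c,r}$ and recall from Lemma \ref{l4.4} that the fiber map $E_u(s)=J_\alpha(s\star u)$ has a unique global maximum at $s=t^3_u$ with $t^3_u\star u\in\mathfrak{P}_\alpha^-$. Hence $\max_{s\in\R}J_\alpha(s\star u)=J_\alpha(t^3_u\star u)\geq\inf_{v\in\mathfrak{P}_\alpha^-\cap S_{c,r}}J_\alpha(v)$, and taking the infimum over $u\in S_{c,r}$ gives ``$\geq$''. Conversely, every $v\in\mathfrak{P}_\alpha^-\cap S_{c,r}$ satisfies $v=0\star v$ with $0$ being the critical point $t^3_v$ of $E_v$ (since $v\in\mathfrak{P}_\alpha^-$ forces $(E_v)'(0)=0$ and $(E_v)''(0)<0$, so $0=t^3_v$), whence $J_\alpha(v)=\max_{s\in\R}J_\alpha(s\star v)\geq\inf_{u\in S_{c,r}}\max_{s\in\R}J_\alpha(s\star u)$; taking the infimum over $v$ gives ``$\leq$''. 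The same argument, using Lemma \ref{mu_zero2} in place of Lemma \ref{l4.4}, yields the identity with $\alpha=0$.

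Next I would identify $\varsigma(c,\alpha)$ with $\inf_{u\in\mathfrak{P}_\alpha^-\cap S_{c,r}}J_\alpha(u)$: this was already recorded in the proof of Theorem \ref{t1}(2), where it is shown that the mountain pass value $\varsigma(c,\alpha)=\inf_{\mathfrak{P}_\alpha^-\cap S_{c,r}}J_\alpha$. Combining this with the first part of the lemma, $\varsigma(c,\alpha)=\inf_{u\in S_{c,r}}\max_{s\in\R}J_\alpha(s\star u)$. Now for $0\leq\alpha_3<\alpha_4<\min\{\alpha_1,\alpha_2\}$ and any fixed $u\in S_{c,r}$, the Choquard term $\frac{1}{2q}\int_{\R^N}(I_\mu\ast|u|^q)|u|^q$ is strictly positive, so $J_{\alpha_4}(s\star u)=J_{\alpha_3}(s\star u)-\frac{(\alpha_4-\alpha_3)e^{2\delta_q q s}}{2q}\int_{\R^N}(I_\mu\ast|u|^q)|u|^q\leq J_{\alpha_3}(s\star u)$ for every $s\in\R$. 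Taking $\max_s$ and then $\inf_u$ preserves the inequality, giving $\varsigma(c,\alpha_4)\leq\varsigma(c,\alpha_3)$; applied at the endpoint (here one uses $\varsigma(c,0)=\inf_{u\in S_{c,r}}\max_s J_0(s\star u)$, which equals $m_r(c,0)=m(c,0)$ by the preceding lemma and Lemma \ref{mu_zero3}) this chains to $\varsigma(c,\alpha_3)\leq m(c,0)$.

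The main obstacle I anticipate is a bookkeeping one rather than a conceptual one: making sure the minimax family $\Gamma$ used to define $\varsigma(c,\alpha)$ in the proof of Theorem \ref{t1}(2) genuinely produces the value $\inf_{\mathfrak{P}_\alpha^-\cap S_{c,r}}J_\alpha$ uniformly as $\alpha$ varies, and that the $\alpha=0$ minimax value (defined in the separate lemma via the construction from \cite{soave2020normalized1}) coincides with $\inf_{u\in S_{c,r}}\max_s J_0(s\star u)$ — this requires invoking Lemma \ref{mu_zero2}, which guarantees that for $\alpha=0$ the fiber map still has a unique maximum and $\mathfrak{P}_0=\mathfrak{P}_0^-$, so the same fiber-map reduction applies. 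Once these identifications are in place, the monotonicity in $\alpha$ is immediate from the pointwise monotonicity of $\alpha\mapsto J_\alpha(s\star u)$, since the $|u|^q$-Choquard energy is positive along every nonzero fiber.
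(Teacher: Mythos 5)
Your proposal is correct and follows exactly the route the paper delegates to \cite[Lemmas 4.15 and 4.16]{li2022normalized}: the fiber-map characterization $\inf_{\mathfrak{P}_\alpha^-\cap S_{c,r}}J_\alpha=\inf_{u\in S_{c,r}}\max_{s}J_\alpha(s\star u)$ via Lemma \ref{l4.4} (and Lemma \ref{mu_zero2} for $\alpha=0$), combined with the pointwise monotonicity of $\alpha\mapsto J_\alpha(s\star u)$ and the identification $\varsigma(c,\alpha)=\inf_{\mathfrak{P}_\alpha^-\cap S_{c,r}}J_\alpha$ from the proof of Theorem \ref{t1}(2). Since the paper itself gives no details beyond that citation, your write-up is a faithful (and complete) rendering of the intended argument.
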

\begin{proof}
The proof is similar to that of  \cite[Lemma 4.15 and 4.16]{li2022normalized}.
\QED\end{proof}

\textbf{Proof of Theorem \ref{t1} (3): }
 By Lemma \ref{l4.3}, we get $t_{0}\rightarrow 0$ as $\alpha\rightarrow 0^{+}$, therefore, we have $\|\nabla u_{c,\alpha,loc}\|_{2}<t_{0}\rightarrow 0.$ Now, consider
\begin{align*}
0>m(c,\alpha)\geq &\frac{a}{2}\|\nabla u_{c,\alpha,loc}\|^{2}_{2}+\frac{b}{2\theta}\|\nabla u_{c,\alpha,loc}\|^{2\theta}_{2}-\frac{\alpha}{2q}C_{q}c^{2q(1-\delta_{q})}\|\nabla u_{c,\alpha,loc}\|_{2}^{2q\delta_{q}}\\
&-\frac{1}{2p}C_{p}c^{2p(1-\delta_{p})}\|\nabla u_{c,\alpha,loc}\|_{2}^{2p\delta_{p}} \rightarrow 0,
\end{align*}
consequently $m(c,\alpha)\rightarrow 0.$

$(4)$: Define the set $\{u_{c,\alpha,m} \ : \ 0<\alpha<\mathring{\alpha}\}$, where $\mathring{\alpha}$ is sufficiently small. Since $P_{\alpha}(u_{c,\alpha,m})= 0$ and using the Lemma \ref{mu_zero4}, we have
\begin{align*}
m(c,0)\geq \varsigma(c,\alpha)&=J_{\alpha}(u_{c,\alpha,m})=\frac{a}{2}\|\nabla u_{c,\alpha,m}\|^{2}_{2}+\frac{b}{2\theta}\|\nabla u_{c,\alpha,m}\|^{2\theta}_{2}-\frac{\alpha}{2q}\int_{\R^{N}}(I_{\mu}\ast|u_{c,\alpha,m}|^{q})|u_{c,\alpha,m}|^{q}\\
-\frac{1}{2p}&\left(\frac{a}{\delta_{p}}\|\nabla u_{c,\alpha,m}\|^{2}_{2}+\frac{b}{\delta_{p}}\|\nabla u_{c,\alpha,m}\|^{2\theta}_{2}-\frac{\alpha\delta_{q}}{\delta_{p}}\int_{\R^{N}}(I_{\mu}\ast|u_{c,\alpha,m}|^{q})|u_{c,\alpha,m}|^{q}\right)\\
& \geq \left(\frac{a}{2}- \frac{a}{2p\delta_{p}}\right)\|\nabla u_{c,\alpha,m}\|^{2}+\left(\frac{b}{2\theta}- \frac{b}{2p\delta_{p}}\right)\|\nabla u_{c,\alpha,m}\|^{2\theta}_{2}\\
&-\frac{\alpha}{2q}\left(1-\frac{q\delta_{q}}{p\delta_{p}}\right)C_{q}\|\nabla u_{c,\alpha,m}\|_{2}^{2q\delta_{q}}c^{2q(1-\delta_{q})},
\end{align*}
which implies $\{u_{c,\alpha,m}\}$ is bounded in $H^{1}(\R^{N}).$ Since $H^{1}_{rad}(\R^{N})\hookrightarrow L^{r}(\R^{N})$ compactly for all $r\in\left( 2,2^{*}\right)$, there exists $\{u_{0}\}\in H^{1}_{rad}(\R^{N})$ such that 
$u_{c,\alpha,m}\rightharpoonup u_{0}$ in $H^{1}(\R^{N})$ and $u_{c,\alpha,m}\rightarrow u_{0}$ in $L^{r}(\R^{N})$ as $\alpha\rightarrow 0$. As $u_{c,\alpha,m}$ is positive for all $\alpha$, we have  $u_{c,\alpha,m}\rightarrow u_{0}\geq 0$ a.e. in $\R^{N}$. 

Also, we know, $u_{c,\alpha,m}\in S_{c,r}$ satisfies the following equation: 
\begin{align}\label{test_1}
\notag \left(a+b\|\nabla u_{c,\alpha,m}\|_{2}^{2(\theta-1)}\right) &\int_{\R^{N}} \nabla u_{c,\alpha,m} \ \nabla v  -\lambda_{c,\alpha,m}\int_{\R^{N}} u_{c,\alpha,m} \ v \\
&-\alpha \int_{\R^{N}}(I_{\mu}\ast|u_{c,\alpha,m}|^{q})|u_{c,\alpha,m}|^{q-2}u_{c,\alpha,m} \ v \\
\notag &-\int_{\R^{N}} (I_{\mu}\ast|u_{c,\alpha,m}|^{p})|u_{c,\alpha,m}|^{p-2}u_{c,\alpha,m} \ v=0, \ \forall \ v\in H^{1}_{rad}(\R^{N}),
\end{align}
for some $\lambda_{c,\alpha,m}<0$.
Further, taking into account, $P_{\alpha}(u_{c,\alpha,m})= 0$, boundedness of $\{u_{c,\alpha,m}\}$ and  \eqref{GN1}, one gets
\begin{align*}
\notag \lambda_{c,\alpha,m}c^{2}&=\alpha(\delta_{q}-1)\int_{\R^{N}}(I_{\mu}\ast|u_{c,\alpha,m}|^{q})|u_{c,\alpha,m}|^{q}
+(\delta_{p}-1)\int_{\R^{N}}(I_{\mu}\ast|u_{c,\alpha,m}|^{p})|u_{c,\alpha,m}|^{p}\\
\notag &\leq \alpha(\delta_{q}-1)c_{6}c^{2q(1-\delta_{q})}+(\delta_{p}-1)c_{7}c^{2p(1-\delta_{p})},
\end{align*}
for some $c_{6},c_{7}>0.$
By above and using the fact $\delta_{p},\delta_{q}<1$, we obtain $\lambda_{c,\alpha,m} \rightarrow  \lambda_{0}\leq 0$ when $\alpha\rightarrow 0$  with $ \lambda_{0}c^{2}=(\delta_{p}-1)\int_{\R^{N}}(I_{\mu}\ast|u_{0}|^{p})|u_{0}|^{p}$, which implies $\lambda_{0}=0$ if and only if $u_{0}\equiv 0$.

Claim: $\lambda_{0}<0$. Since, $u_{c,\alpha,m}\rightharpoonup u_{0}$ in $H^{1}(\R^{N})$ when $\alpha\rightarrow 0$ implies that $u_{0}$ is the solution for the equation
\begin{align}\label{test_2}
-\left(a+bD\right) \Delta u_{0} =\lambda u_{0}+(I_{\mu}\ast|u_{0}|^{2_{\mu}^{*}})|u_{0}|^{2_{\mu}^{*}-2}u_{0} \ \hbox{in} \  \R^{N}, 
\end{align}

where $D=\lim\limits_{\alpha\rightarrow 0} \|\nabla u_{c,\alpha,m}\|^{2(\theta-1)}_{2}\geq \|\nabla u_{0}\|^{2(\theta-1)}_{2}$. 
By Lemma \ref{mu_zero4}, we have
\begin{align*}
0<\varsigma(c,\mathring{\alpha})\leq\lim\limits_{\alpha\rightarrow 0}&\varsigma(c,\alpha)=\lim\limits_{\alpha\rightarrow 0} J_{\alpha}(u_{c,\alpha,m})\\
&=\lim\limits_{\alpha\rightarrow 0}\left[ -\frac{b(\theta-1)}{2\theta}\|\nabla u_{c,\alpha,m}\|^{2\theta}_{2}-\left( \frac{\alpha }{2q}-\frac{\alpha \delta_{q}}{2}\right) \int_{\R^{N}}(I_{\mu}\ast|u_{c,\alpha,m}|^{q})|u_{c,\alpha,m}|^{q}\right.\\
&\left.-\left( \frac{1}{2p}-\frac{ \delta_{p}}{2}\right) \int_{\R^{N}}(I_{\mu}\ast|u_{c,\alpha,m}|^{p})|u_{c,\alpha,m}|^{p}\right]\\
&\leq -\frac{b(\theta-1)}{2\theta}\|\nabla u_{0}\|^{2\theta}_{2}+\left( \frac{ \delta_{p}}{2}-\frac{1}{2p}\right) \int_{\R^{N}}(I_{\mu}\ast|u_{0}|^{p})|u_{0}|^{p},
\end{align*}
which implies $\frac{b(\theta-1)}{2\theta}\|\nabla u_{0}\|^{2\theta}_{2}\leq \left( \frac{ \delta_{p}}{2}-\frac{1}{2p}\right) \int_{\R^{N}}(I_{\mu}\ast|u_{0}|^{p})|u_{0}|^{p}\leq \left( \frac{ \delta_{p}}{2}-\frac{1}{2p}\right) C_{p}\|\nabla u_{0}\|_{2}^{2p\delta_{p}}c^{2p(1-\delta_{p})}$, hence $u_{0}\neq 0$. Therefore, $\lambda_{0}<0$ and $D>0$. Hence by strong maximum principle $u_{0}$ is positive. Subtracting \eqref{test_2} from \eqref{test_1} with the test function $u_{c,\alpha,m}-u_{0}$, we obtain
$$\left(a+bD\right)\int_{\R^{N}} |\nabla (u_{c,\alpha,m}-u)|^{2}  -\lambda \int_{\R^{N}} | u_{c,\alpha,m}-u|^{2} \rightarrow 0$$ 
which yields $u_{c,\alpha,m}\rightarrow u$ in $H^{1}(\R^{N})$ as $\alpha\rightarrow 0$. Also, we have $m(c,0)\leq J_{0}(u_{0})$. Using the fact that  $u_{c,\alpha,m}\rightarrow u$ in $D^{1,2}(\R^{N})$, we obtain $J_{0}(u_{0})=\lim\limits_{\alpha\rightarrow 0} J_{\alpha}(u_{c,\alpha,m})=\lim\limits_{\alpha\rightarrow 0}\varsigma(c,\alpha)\leq m(c,0).$ Therefore, $u_{0}$ is the ground state solution of $J_{0}|_{S_{c}}$.
\QED

\textbf{Proof of the Theorem \ref{t2}(1): } Proceeding as the proof of Theorem \ref{t1} \textit{$(1)$}, we have a radially decreasing Palais Smale sequence $\{u_{n}\}$ such that $\|\nabla u_{n}\|_{2}<t_{0}-k$  and $\lim\limits_{n\rightarrow \infty}P_{\alpha}(u_{n})=0$.
Hence, $\{u_{n}\}$ hold all the assumptions of Lemma \ref{l2}.\\
Claim:  $u_{n}\rightarrow u_{c,\alpha,loc}$ strongly in $H^{1}(\R^{N})$ for some $u_{c,\alpha,loc}\in H^{1}(\R^{N})$.
Let on the contrary that the alternative (1) of the Lemma \ref{l2} is true, i.e., 
$u_{n}\rightharpoonup u_{c,\alpha,loc}\neq 0$ weakly in $H^{1}(\R^{N})$ but not strongly then for some $\lambda<0$, $u_{c,\alpha,loc}$ is the solution for the equation:
\begin{equation}\label{t4.9}
-\left(a+bD\right) \Delta u =\lambda u+\alpha (I_{\mu}\ast|u|^{q})|u|^{q-2}u+(I_{\mu}\ast|u|^{2_{\mu}^{*}})|u|^{2_{\mu}^{*}-2}u \ \hbox{in} \  \R^{N} \end{equation}
and
\begin{equation}\label{t2.1}
l\geq \tilde{J}_{\alpha}(u_{c,\alpha,loc})+\left( S_{HL}^{\theta+1}4ab\right) ^\frac{2_{\mu}^{*}}{22_{\mu}^{*}-(\theta+1)}\left( \frac{2_{\mu}^{*}-\theta}{22_{\mu}^{*}\theta}\right),
\end{equation}
where  $D=\lim\limits_{n\rightarrow\infty} \|\nabla u_{n}\|^{2(\theta-1)}_{2}$.
The corresponding Pohozaev identity is
\begin{equation}\label{t4.10}
Q_{\alpha}(u_{c,\alpha,loc})=\left(a+bD\right)\|\nabla u_{c,\alpha,loc}\|^{2}_{2}-\delta_{q}\alpha\int_{\R^{N}}(I_{\mu}\ast|u_{c,\alpha,loc}|^{q})|u_{c,\alpha,loc}|^{q}-\int_{\R^{N}}(I_{\mu}\ast|u_{c,\alpha,loc}|^{2_{\mu}^{*}})|u|^{2_{\mu}^{*}}=0.
\end{equation}

By using \ref{t4.10}, $\|u_{c,\alpha,loc}\|_{2}\leq c$ and the fact that $u_{c,\alpha,loc}$ is the solution of \eqref{t4.9}, we have
\begin{align*} \tilde{J}_{\alpha}(u_{c,\alpha,loc})&=\left( \frac{a}{2}+\frac{Db}{2\theta}\right) \|\nabla u_{c,\alpha,loc}\|^{2}_{2}-\frac{\alpha}{2q}\int_{\R^{N}}(I_{\mu}\ast|u_{c,\alpha,loc}|^{q})|u_{c,\alpha,loc}|^{q}\\
&-\frac{1}{22_{\mu}^{*}}\int_{\R^{N}}(I_{\mu}\ast|u_{c,\alpha,loc}|^{2_{\mu}^{*}})|u_{c,\alpha,loc}|^{2_{\mu}^{*}}\\
&=\left( \frac{a}{2}+\frac{Db}{2\theta}\right) \|\nabla u_{c,\alpha,loc}\|^{2}_{2}-\frac{\alpha}{2q}\int_{\R^{N}}(I_{\mu}\ast|u_{c,\alpha,loc}|^{q})|u_{c,\alpha,loc}|^{q}\\
&-\frac{1}{22_{\mu}^{*}}\left[ \left(a+bD\right)\|\nabla u_{c,\alpha,loc}\|^{2}_{2}-\delta_{q}\alpha\int_{\R^{N}}(I_{\mu}\ast|u_{c,\alpha,loc}|^{q})|u_{c,\alpha,loc}|^{q}\right] \\
&=a\left( \frac{2_{\mu}^{*}-1}{22_{\mu}^{*}}\right)\|\nabla u_{c,\alpha,loc}\|^{2}_{2}+ b\left( \frac{2_{\mu}^{*}-\theta}{22_{\mu}^{*}\theta}\right)\|\nabla u_{c,\alpha,loc}\|^{2\theta}_{2}\\
&- \alpha\left( \frac{2_{\mu}^{*}-q\delta_{q}}{2q2_{\mu}^{*}}\right)\int_{\R^{N}}(I_{\mu}\ast|u_{c,\alpha,loc}|^{q})|u_{c,\alpha,loc}|^{q}\\
&\geq b\left( \frac{2_{\mu}^{*}-\theta}{22_{\mu}^{*}\theta}\right)\|\nabla u_{c,\alpha,loc}\|^{2\theta}_{2}- \alpha\left( \frac{2_{\mu}^{*}-q\delta_{q}}{2q2_{\mu}^{*}}\right)C_{q}\|\nabla u_{c,\alpha,loc}\|_{2}^{2q\delta_{q}}c^{2q(1-\delta_{q})}=f(\|\nabla u_{c,\alpha,loc}\|_{2}),
\end{align*}
where
$f(t)=b\left( \frac{2_{\mu}^{*}-\theta}{22_{\mu}^{*}\theta}\right)t^{2\theta}- \alpha\left( \frac{2_{\mu}^{*}-q\delta_{q}}{2q2_{\mu}^{*}}\right)C_{q}c^{2q(1-\delta_{q})}t^{2q\delta_{q}}$. Therefore, $f'(t^{*})=0$, where $$t^{*}=\left[ \dfrac{\alpha\delta_{q}(2_{\mu}^{*}-q\delta_{q})C_{q}c^{2q(1-\delta_{q})}}{b(2_{\mu}^{*}-\theta)}\right]^\frac{1}{2\theta-2q\delta_{q}}.$$ 

We can see that $f(t)\longrightarrow\infty$ as $t\longrightarrow\infty$, thus we have \begin{align}\label{t2.2}
 \notag \min_{t\geq 0}f(t)=f(t^{*})&=-b(t^{*})^{2\theta}\left(\frac{\theta-q\delta_{q}}{\theta q\delta_{q}} \right)\left(\frac{2_{\mu}^{*}-\theta}{2 2_{\mu}^{*}} \right)\\
&> -\left( S_{HL}^{\theta+1}4ab\right) ^\frac{2_{\mu}^{*}}{22_{\mu}^{*}-(\theta+1)}\left( \frac{2_{\mu}^{*}-\theta}{22_{\mu}^{*}\theta}\right),
\end{align}
since, $\alpha<\alpha_{3},$ where $\alpha_{3}$ is defined in \eqref{mu3}. 

As $\tilde{J}_{\alpha}(u_{c,\alpha,loc})\geq f(\|\nabla u_{c,\alpha,loc}\|_{2})$, using \eqref{t2.1} and \eqref{t2.2}, we get
$$0>l\geq\tilde{J}_{\alpha}(u_{c,\alpha,loc})+\left( S_{HL}^{\theta+1}4ab\right) ^\frac{2_{\mu}^{*}}{22_{\mu}^{*}-(\theta+1)}\left( \frac{2_{\mu}^{*}-\theta}{22_{\mu}^{*}\theta}\right)>0,$$
which is a contradiction. Hence, our supposition is wrong; thus  $u_{n}\rightarrow u_{c,\alpha,loc}$ strongly in $H^{1}(\R^{N})$, $u_{c,\alpha,loc}\in S_{c}$ and $u_{c,\alpha,loc}$ is a minimizer for  $m(c,\alpha)$ on the set $\Upsilon_{t_{0}}$ and it is a solution of \eqref{1.1} for some $\lambda<0.$ It can be easily verify that $u_{c,\alpha,loc}$ is non-negative and radially decreasing; hence, by the strong maximum principle $u_{c,\alpha,loc}$ is positive. Proceeding as the proof of Theorem \ref{t1} (1), we get $u_{c,\alpha,loc}$ is a ground state solution of $J_{\alpha}|_{S_{c}}$ and any ground state solution of $J_{\alpha}|_{S_{c}}$ is a local minimizer of $J_{\alpha}$  on the set $\Upsilon_{t_{0}}$.

\textbf{Proof of the Theorem \ref{t2}(2): }  By Lemma \ref{l4.3}, we get $t_{0}\rightarrow 0$ as $\alpha\rightarrow 0^{+}$, therefore, we have $\|\nabla u_{c,\alpha,loc}\|_{2}<t_{0}\rightarrow 0.$ Now, consider
\begin{align*}
0>m(c,\alpha)\geq &\frac{a}{2}\|\nabla u_{c,\alpha,loc}\|^{2}_{2}+\frac{b}{2\theta}\|\nabla u_{c,\alpha,loc}\|^{2\theta}_{2}-\frac{\alpha}{2q}C_{q}c^{2q(1-\delta_{q})}\|\nabla u_{c,\alpha,loc}\|_{2}^{2q\delta_{q}}\\
&-\frac{1}{2p}C_{p}c^{2p(1-\delta_{p})}\|\nabla u_{c,\alpha,loc}\|_{2}^{2p\delta_{p}} \rightarrow 0,
\end{align*}
consequently $m(c,\alpha)\rightarrow 0.$ 
\QED

\section{$L^{2}-$ supercritical case}
In this section, first, we discuss the existence of a solution for the Problem \eqref{1.1} when $B^{*}<q<p<2_{\mu}^{*}$, i.e., $L^{2}-$ supercritical and Sobolev subcritical case. Later, we will discuss the particular case of the Problem \ref{1.1} when $N=3$, $\theta=2$, $2_{\mu}^{*}=4$ and $\mu=2$ with $L^{2}-$ supercritical and Sobolev critical exponent, i.e., $B^{*}<q<p=2_{\mu}^{*}$.


\begin{lemma}\label{sl1}
	Let $a_{1},a_{2},a_{3},a_{4}\in(0,\infty)$ and $p_{1},q_{1}\in(2\theta,\infty)$. Then $h(t)=a_{1}t^{2}+a_{2}t^{2\theta}-a_{3}t^{p_{1}}-a_{4}t^{q_{1}}$ has a unique maxima of positive level on $[0,\infty)$.
\end{lemma}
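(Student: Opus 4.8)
The plan is to prove that $h$ has a single critical point $t^{*}\in(0,\infty)$, that it is strictly increasing to the left of $t^{*}$ and strictly decreasing to its right, and that $h(t^{*})>0$; together these give a unique maximum of positive level.

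First I would record the boundary behaviour. Since $\theta>1$ we have $2<2\theta<\min\{p_{1},q_{1}\}$, hence $h(0)=0$, $h(t)=a_{1}t^{2}+o(t^{2})>0$ for $t>0$ small, and $h(t)\to-\infty$ as $t\to\infty$ because the negative monomials carry the largest exponents. Consequently $\sup_{t\ge 0}h>0$ and is attained at some point of $(0,\infty)$.

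The heart of the matter is uniqueness of the critical point, and the trick is to factor out the right power of $t$. For $t>0$,
\[
h'(t)=2a_{1}t+2\theta a_{2}t^{2\theta-1}-p_{1}a_{3}t^{p_{1}-1}-q_{1}a_{4}t^{q_{1}-1}=t^{2\theta-1}\bigl(L(t)-R(t)\bigr),
\]
where $L(t):=2a_{1}t^{2-2\theta}+2\theta a_{2}$ and $R(t):=p_{1}a_{3}t^{p_{1}-2\theta}+q_{1}a_{4}t^{q_{1}-2\theta}$. Because $2-2\theta<0$, $L$ is strictly decreasing on $(0,\infty)$ with $L(0^{+})=+\infty$ and $\lim_{t\to\infty}L(t)=2\theta a_{2}>0$; because $p_{1}-2\theta>0$ and $q_{1}-2\theta>0$, $R$ is strictly increasing with $R(0^{+})=0$ and $R(\infty)=+\infty$. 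Hence $L-R$ is strictly decreasing from $+\infty$ to $-\infty$, so it vanishes at exactly one point $t^{*}\in(0,\infty)$, with $L-R>0$ on $(0,t^{*})$ and $L-R<0$ on $(t^{*},\infty)$. Since $t^{2\theta-1}>0$, the same sign pattern holds for $h'$.

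Therefore $h$ is strictly increasing on $[0,t^{*}]$ and strictly decreasing on $[t^{*},\infty)$, so $t^{*}$ is the unique maximizer of $h$ on $[0,\infty)$, and $h(t^{*})\ge h(t)>0$ for $t>0$ small gives the positive level. I do not anticipate a genuine obstacle here; the only delicate point is to divide $h'$ by $t^{2\theta-1}$ (rather than by $t$), which converts the naive comparison of two increasing functions into the monotone comparison $L\downarrow$ versus $R\uparrow$ and thereby forces uniqueness.
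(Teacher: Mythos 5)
Your proof is correct and complete: the factorization $h'(t)=t^{2\theta-1}\bigl(L(t)-R(t)\bigr)$ with $L$ strictly decreasing from $+\infty$ to $2\theta a_{2}>0$ and $R$ strictly increasing from $0$ to $+\infty$ (using $\theta>1$ and $p_{1},q_{1}>2\theta$) does force a unique sign change of $h'$, and positivity of the level follows from $h(0)=0$ and strict increase on $[0,t^{*}]$. The paper does not actually write out a proof here — it defers to \cite[Lemma 5.1]{li2022normalized} — and your argument is the standard one used in that reference (reduce to a decreasing-versus-increasing comparison after dividing by the appropriate power of $t$), so it is essentially the same approach, with the advantage of being self-contained.
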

\begin{proof}
	 By using the same assertions as in  \cite[Lemma 5.1]{li2022normalized}, we get the desired result. \QED
\end{proof}
\begin{lemma}\label{sl2}
Let $B^{*}<q<p\leq 2_{\mu}^{*}$. For any $u\in S_{c}$, the function $E_{u}$ has a unique maxima, say $t^{*}_{u}$ of positive level. Moreover, the following holds: 
\begin{enumerate}
	\item $E_{u}$ is strictly decreasing on $(t^{*}_{u},\infty)$. Also, if  $t^{*}_{u}<0$ then $P_{\alpha}(u)<0$.
	\item $\mathfrak{P_{\alpha}}=\mathfrak{P_{\alpha}^{-}}$. Also, if $P_{\alpha}(u)<0$, then $t^{*}_{u}<0$.
	\item The map $u\in S_{c}\mapsto t^{*}_{u}\in \R$ is a  $C^{1}$ function.
\end{enumerate}
\end{lemma}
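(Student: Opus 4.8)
The plan is to realise $E_u$ as a one–variable function of $e^{s}$ and apply Lemma \ref{sl1}. Fix $u\in S_{c}$ and set $a_{1}=\frac{a}{2}\|\nabla u\|_{2}^{2}$, $a_{2}=\frac{b}{2\theta}\|\nabla u\|_{2}^{2\theta}$, $a_{3}=\frac{1}{2p}\int_{\R^{N}}(I_{\mu}\ast|u|^{p})|u|^{p}$, $a_{4}=\frac{\alpha}{2q}\int_{\R^{N}}(I_{\mu}\ast|u|^{q})|u|^{q}$, $p_{1}=2p\delta_{p}$ and $q_{1}=2q\delta_{q}$. Since $u\neq 0$ the two convolution integrals are strictly positive, so $a_{1},\dots,a_{4}\in(0,\infty)$, while $B^{*}<q<p\le 2_{\mu}^{*}$ together with Remark \ref{rem1} gives $q\delta_{q}>\theta$ and $p\delta_{p}>\theta$, i.e. $p_{1},q_{1}\in(2\theta,\infty)$. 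Using $\|\nabla(s\star u)\|_{2}^{2}=e^{2s}\|\nabla u\|_{2}^{2}$ and $\int_{\R^{N}}(I_{\mu}\ast|s\star u|^{r})|s\star u|^{r}=e^{2r\delta_{r}s}\int_{\R^{N}}(I_{\mu}\ast|u|^{r})|u|^{r}$ one checks that $E_{u}(s)=h(e^{s})$, where $h(t)=a_{1}t^{2}+a_{2}t^{2\theta}-a_{3}t^{p_{1}}-a_{4}t^{q_{1}}$. By Lemma \ref{sl1} (and its proof, via the reduction $h'(t)=t^{2\theta-1}\zeta(t)$ with $\zeta$ strictly decreasing from $+\infty$ to $-\infty$) $h$ has a unique critical point $\bar t>0$ on $(0,\infty)$, it is strictly increasing on $(0,\bar t)$ and strictly decreasing on $(\bar t,\infty)$, $h(\bar t)>0$, and $h''(\bar t)<0$. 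Since $s\mapsto e^{s}$ is an increasing diffeomorphism of $\R$ onto $(0,\infty)$, $E_{u}$ has a unique maximum at $t^{*}_{u}:=\log\bar t$, of positive level $E_{u}(t^{*}_{u})=h(\bar t)>0$, and $E_{u}$ is strictly decreasing on $(t^{*}_{u},\infty)$; this is the first statement together with the monotonicity in (1).

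For the remaining claims, comparing the formulas for $(E_{u})'$ and $P_{\alpha}$ shows $(E_{u})'(s)=P_{\alpha}(s\star u)$; in particular $(E_{u})'(0)=P_{\alpha}(u)$. Because $E_{u}$ is strictly increasing on $(-\infty,t^{*}_{u})$ and strictly decreasing on $(t^{*}_{u},\infty)$, the sign of $(E_{u})'(0)$ locates $0$ relative to $t^{*}_{u}$: if $t^{*}_{u}<0$ then $0\in(t^{*}_{u},\infty)$, so $(E_{u})'(0)<0$, i.e. $P_{\alpha}(u)<0$; conversely, $P_{\alpha}(u)<0$ forces $0>t^{*}_{u}$. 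This yields the second sentence of (1) and the last sentence of (2).

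It remains to prove $\mathfrak{P_{\alpha}}=\mathfrak{P_{\alpha}^{-}}$. From $t\star(s\star u)=(t+s)\star u$ we get $E_{s\star u}(t)=E_{u}(t+s)$, hence $(E_{s\star u})''(0)=(E_{u})''(s)$; differentiating $E_{u}(s)=h(e^{s})$ twice and using $h'(\bar t)=0$ gives $(E_{u})''(t^{*}_{u})=h''(\bar t)\,\bar t^{\,2}<0$. Now if $w\in\mathfrak{P_{\alpha}}$ then $(E_{w})'(0)=P_{\alpha}(w)=0$, so $0$ is the unique critical point of $E_{w}$, that is $t^{*}_{w}=0$, whence $(E_{w})''(0)=(E_{w})''(t^{*}_{w})<0$ and $w\in\mathfrak{P_{\alpha}^{-}}$; the reverse inclusion is trivial, so $\mathfrak{P_{\alpha}}=\mathfrak{P_{\alpha}^{-}}$ (and $\mathfrak{P}^{0}_{\alpha}=\emptyset$). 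Finally, (3) follows from the implicit function theorem applied to $G(s,u)=(E_{u})'(s)=P_{\alpha}(s\star u)$: this map is $C^{1}$ on $\R\times S_{c}$ (as $J_{\alpha}\in C^{1}$ and the $\star$–action is smooth in $s$), $G(t^{*}_{u},u)=0$, and $\partial_{s}G(t^{*}_{u},u)=(E_{u})''(t^{*}_{u})<0\neq 0$, so $u\mapsto t^{*}_{u}$ is $C^{1}$ in a neighbourhood of every point of $S_{c}$, hence $C^{1}$ on $S_{c}$.

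The argument is a standard fibering-map analysis; the only delicate point is the strict inequality $h''(\bar t)<0$ — equivalently $\mathfrak{P}^{0}_{\alpha}=\emptyset$ — which is precisely where the $L^{2}$-supercriticality $q,p>B^{*}$ (i.e. $q\delta_{q},p\delta_{p}>\theta$) is genuinely used, and it is already contained in Lemma \ref{sl1}.
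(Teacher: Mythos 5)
Your proof is correct and follows essentially the same route as the paper: the paper invokes Lemma \ref{sl1} for the unique positive-level maximum and then refers to the fibering-map analysis of Soave's Lemma 6.1 for (1)--(3), which is exactly the argument you have written out (identification $E_u(s)=h(e^s)$, sign of $(E_u)'(0)=P_\alpha(u)$, strict negativity of $h''$ at the maximum via the factorization $h'(t)=t^{2\theta-1}\zeta(t)$, and the implicit function theorem for $C^1$ dependence). Your version simply makes explicit the details the paper leaves to the citation, including the verification $p\delta_p,q\delta_q>\theta$ from Remark \ref{rem1}.
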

\begin{proof}
By Lemma \ref{sl1}, $E_{u}$ has unique maxima, say $t^{*}_{u}$ of positive level. Proceeding as in  proof of \cite[Lemma 6.1]{soave2020normalized2}, we can easily obtain (1)-(3).
\QED\end{proof}

\begin{lemma}\label{sl3}
Let  $B^{*}<q<p\leq 2_{\mu}^{*}$. Then $m(c,\alpha)=\inf\limits_{u\in\mathfrak{P_{\alpha}}}J_{\alpha}(u)>0$. Also, there exists $r>0$ (sufficiently small) such that $$0<\sup\limits_{u\in\overline{\Upsilon_{r}}}J_{\alpha}(u)<m(c,\alpha),$$
where $\Upsilon_{r}=\{u\in S_{c}:\|\nabla u\|_{2}^{2}<r\}$. Moreover, if $u\in\overline{\Upsilon_{r}}$ then $J_{\alpha}(u)>0, \ P_{\alpha}(u)>0.$
\end{lemma}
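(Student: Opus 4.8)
The strategy rests on the observation that, in the range $B^{*}<q<p\le 2_{\mu}^{*}$, both Choquard exponents are $L^{2}$-supercritical, so Remark \ref{rem1} gives the chain $1<\theta<q\delta_{q}<p\delta_{p}$, the strict inequality $q\delta_{q}<p\delta_{p}$ coming from the monotonicity of $r\mapsto r\delta_{r}$. First I would note that $\mathfrak{P_{\alpha}}\neq\phi$ (by Lemma \ref{sl2} every $u\in S_{c}$ has a dilate $t^{*}_{u}\star u\in\mathfrak{P_{\alpha}}$), so $m(c,\alpha)$ is an infimum over a non-empty set. Next, for $u\in\mathfrak{P_{\alpha}}$ the identity $P_{\alpha}(u)=0$ and the Gagliardo--Nirenberg inequality \eqref{GN1} (using \eqref{a2} when $p=2_{\mu}^{*}$, so that $\delta_{p}=1$ and $C_{p}=S_{HL}^{-2_{\mu}^{*}}$) give
\begin{equation*}
a\|\nabla u\|_{2}^{2}\le a\|\nabla u\|_{2}^{2}+b\|\nabla u\|_{2}^{2\theta}\le \delta_{q}\alpha C_{q}c^{2q(1-\delta_{q})}\|\nabla u\|_{2}^{2q\delta_{q}}+\delta_{p}C_{p}c^{2p(1-\delta_{p})}\|\nabla u\|_{2}^{2p\delta_{p}}.
\end{equation*}
Dividing by $\|\nabla u\|_{2}^{2}$ and using $q\delta_{q}>1$, $p\delta_{p}>1$, the right-hand side tends to $0$ as $\|\nabla u\|_{2}\to0$; hence there exists $c_{\ast}>0$, depending only on $a,b,c,\alpha$, with $\|\nabla u\|_{2}\ge c_{\ast}$ for every $u\in\mathfrak{P_{\alpha}}$.

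To prove $m(c,\alpha)>0$, I would use $P_{\alpha}(u)=0$ to eliminate the \emph{lower} Choquard term in $J_{\alpha}$: substituting $\alpha\int_{\R^{N}}(I_{\mu}\ast|u|^{q})|u|^{q}=\tfrac{1}{\delta_{q}}\big(a\|\nabla u\|_{2}^{2}+b\|\nabla u\|_{2}^{2\theta}-\delta_{p}\int_{\R^{N}}(I_{\mu}\ast|u|^{p})|u|^{p}\big)$ into the expression for $J_{\alpha}$ yields, for $u\in\mathfrak{P_{\alpha}}$,
\begin{equation*}
J_{\alpha}(u)=\frac{a}{2}\Big(1-\frac{1}{q\delta_{q}}\Big)\|\nabla u\|_{2}^{2}+\frac{b}{2}\Big(\frac{1}{\theta}-\frac{1}{q\delta_{q}}\Big)\|\nabla u\|_{2}^{2\theta}+\frac{1}{2}\Big(\frac{\delta_{p}}{q\delta_{q}}-\frac{1}{p}\Big)\int_{\R^{N}}(I_{\mu}\ast|u|^{p})|u|^{p}.
\end{equation*}
Because $q\delta_{q}>\theta>1$ and $p\delta_{p}>q\delta_{q}$, all three coefficients are strictly positive, so $J_{\alpha}(u)\ge\frac{a}{2}\big(1-\frac{1}{q\delta_{q}}\big)c_{\ast}^{2}>0$ on $\mathfrak{P_{\alpha}}$, which gives $m(c,\alpha)=\inf_{\mathfrak{P_{\alpha}}}J_{\alpha}>0$.

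For the behaviour near the origin, if $\|\nabla u\|_{2}^{2}\le r$ then applying \eqref{GN1} to the two Choquard terms and factoring out $\|\nabla u\|_{2}^{2}$ from $J_{\alpha}$ and from $P_{\alpha}$ produces estimates of the form $J_{\alpha}(u)\ge\|\nabla u\|_{2}^{2}\big(\tfrac{a}{2}-o_{r}(1)\big)$ and $P_{\alpha}(u)\ge\|\nabla u\|_{2}^{2}\big(a-o_{r}(1)\big)$, where $o_{r}(1)\to0$ as $r\to0$ because the remaining powers of $\|\nabla u\|_{2}$ carry the positive exponents $2q\delta_{q}-2$ and $2p\delta_{p}-2$. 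Since $\|\nabla u\|_{2}>0$ on $S_{c}$, this shows that $J_{\alpha}(u)>0$ and $P_{\alpha}(u)>0$ for all $u\in\overline{\Upsilon_{r}}$ once $r$ is small enough. Moreover $\sup_{\overline{\Upsilon_{r}}}J_{\alpha}\le\frac{a}{2}r+\frac{b}{2\theta}r^{\theta}\to0$ as $r\to0$, and $m(c,\alpha)>0$ does not depend on $r$, so shrinking $r$ further gives $0<\sup_{\overline{\Upsilon_{r}}}J_{\alpha}<m(c,\alpha)$; the supremum is positive since $\overline{\Upsilon_{r}}\neq\phi$ and $J_{\alpha}>0$ on it.

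The step demanding the most care is the rewriting of $J_{\alpha}$ on $\mathfrak{P_{\alpha}}$: one must eliminate the degree-$q$ Choquard term and \emph{not} the critical/supercritical degree-$p$ one, since only that choice makes all three resulting coefficients positive, and this is precisely where the chain $\theta<q\delta_{q}<p\delta_{p}$ (equivalently $B^{*}<q<p$, via Remark \ref{rem1}) enters; the borderline case $p=2_{\mu}^{*}$, where $\delta_{p}=1$ and $C_{p}=S_{HL}^{-2_{\mu}^{*}}$, has to be carried along at every step.
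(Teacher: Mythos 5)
Your proposal is correct and follows essentially the same route as the paper, which simply declares this proof "similar to that of Lemma \ref{mu_zero3}": a Gagliardo--Nirenberg lower bound for $\|\nabla u\|_{2}$ on $\mathfrak{P_{\alpha}}$, a rewriting of $J_{\alpha}$ on $\mathfrak{P_{\alpha}}$ with all-positive coefficients, and small-$r$ estimates for $J_{\alpha}$ and $P_{\alpha}$ on $\overline{\Upsilon_{r}}$. The one detail the paper leaves implicit --- that with two supercritical Choquard terms one should eliminate the $q$-term (or equivalently subtract $\tfrac{1}{2q\delta_{q}}P_{\alpha}$), exploiting $\theta<q\delta_{q}<p\delta_{p}$ --- you handle correctly and explicitly.
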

\begin{proof}
The proof is similar to that of Lemma \ref{mu_zero3}.
\QED\end{proof}


\subsection{$L^{2}-$ supercritical and Sobolev critical: Particular case}

In this subsection, we study the Problem \eqref{1.1} when $N=3$, $\theta=2$, $2_{\mu}^{*}=4$ and $\mu=2$ with $L^{2}-$ supercritical and Sobolev critical exponent, i.e., $\frac{8}{3}<q<p=4$.

\begin{lemma}\label{l5.4}
Let $\frac{8}{3}<q<p=4$ and $\frac{a^{2}}{4}-\frac{b^{3}S_{HL}^{4}}{27}>0$. Assume $\{u_n\}\subseteq S_{c,r}$ be a Palais-Smale sequence for the functional $J_{\alpha}|_{S_{c}}$ of level $l\neq 0$ with 
 \begin{equation}\label{l1.12}
 l<\left( \frac{b \Lambda^{2} S_{HL}^{2} }{8}+\frac{3a\Lambda S_{HL}}{8}\right) 
 \end{equation}
and $P_{\alpha}(u_n)\rightarrow 0$ as $n\rightarrow\infty$, where
 \begin{equation}\label{Lambda}
 	\Lambda=\left(\frac{aS_{HL}}{2}+\sqrt{\frac{a^{2}S_{HL}^{2}}{4}-\frac{b^{3}S_{HL}^{6}}{27}}\right)^{\frac{1}{3}}+\left(\frac{aS_{HL}}{2}-\sqrt{\frac{a^{2}S_{HL}^{2}}{4}-\frac{b^{3}S_{HL}^{6}}{27}}\right)^{\frac{1}{3}}. 
 	\end{equation}

 Then, one of the following holds:
\begin{enumerate}
\item[(i)] Either $u_{n}\rightharpoonup u\neq 0$ weakly in $H^{1}(\R^{N})$ but not strongly then $u$ is the solution of the following equation for some $\lambda<0$:
$$
-\left(a+bD\right) \Delta u =\lambda u+\alpha (I_{\mu}\ast|u|^{q})|u|^{q-2}u+(I_{\mu}\ast|u|^{2_{\mu}^{*}})|u|^{2_{\mu}^{*}-2}u \ \hbox{in} \  \R^{N} $$
and
$$l-\left( \frac{b \Lambda^{2} S_{HL}^{2} }{8}+\frac{3a\Lambda S_{HL}}{8}\right)\geq \tilde{J}_{\alpha}(u),$$ where  $D=\lim\limits_{n\rightarrow\infty} \|\nabla u_{n}\|^{2(\theta-1)}_{2}$.
\item[(ii)] Or $u_{n}\rightarrow u$ strongly in $H^{1}(\R^{N}).$ Also, $u\in S_{c}, J_{\alpha}(u)=l$ and $u$ solves \eqref{1.1} for some $\lambda<0$.
\end{enumerate} 
 Moreover, $u$ is the radial solution of \eqref{1.1} for some $\lambda<0.$
\end{lemma}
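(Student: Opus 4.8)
The plan is to mirror the architecture of the proof of Lemma~\ref{l2}, replacing the crude arithmetic--geometric mean estimate used there by the \emph{exact} positive root of the cubic $x^{3}-bS_{HL}^{2}x-aS_{HL}=0$; this is precisely where Cardano's formula — and hence the restrictions $N=3$, $\theta=2$, $\mu=2$ together with $\tfrac{a^{2}}{4}-\tfrac{b^{3}S_{HL}^{4}}{27}>0$ — are needed. First I would establish boundedness of $\{u_{n}\}$ in $H^{1}(\R^{N})$ and the existence of Lagrange multipliers $\lambda_{n}\to\lambda$ exactly as at the start of the proof of Lemma~\ref{l2} (which itself reduces to Lemma~\ref{l1}). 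Since here $\delta_{p}=\delta_{2_{\mu}^{*}}=1$, testing the Lagrange identity with $u_{n}$ and subtracting $P_{\alpha}(u_{n})\to 0$ gives $\lambda_{n}c^{2}=\alpha(\delta_{q}-1)\int_{\R^{N}}(I_{\mu}\ast|u_{n}|^{q})|u_{n}|^{q}+o_{n}(1)\le 0$, so $\lambda\le 0$.

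The crucial point is the claim $\lambda<0$, equivalently $u\ne 0$. Suppose $\lambda=0$. Then $\int_{\R^{N}}(I_{\mu}\ast|u_{n}|^{q})|u_{n}|^{q}\to 0$, and combining this with $P_{\alpha}(u_{n})\to 0$ yields (along a subsequence) $at+bt^{2}=m$, where $t=\lim\|\nabla u_{n}\|_{2}^{2}$ and $m=\lim\int_{\R^{N}}(I_{\mu}\ast|u_{n}|^{4})|u_{n}|^{4}$; if $m=0$ then $t=0$ and $J_{\alpha}(u_{n})\to 0=l$, impossible, so $m>0$. The definition of $S_{HL}$ forces $t\ge S_{HL}m^{1/4}$, hence $m=at+bt^{2}\ge aS_{HL}m^{1/4}+bS_{HL}^{2}m^{1/2}$, i.e. $x:=m^{1/4}>0$ satisfies $x^{3}-bS_{HL}^{2}x-aS_{HL}\ge 0$. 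Under $\tfrac{a^{2}}{4}-\tfrac{b^{3}S_{HL}^{4}}{27}>0$ the cubic $x^{3}-bS_{HL}^{2}x-aS_{HL}=0$ has exactly one real root, which by Cardano's formula is $\Lambda$ as in \eqref{Lambda}; as the cubic is negative at $x=0$ and strictly increasing beyond its critical point, $x\ge\Lambda$, whence $t\ge S_{HL}\Lambda$. Plugging back, $l=\lim J_{\alpha}(u_{n})=\tfrac{3a}{8}t+\tfrac{b}{8}t^{2}\ge\tfrac{3a\Lambda S_{HL}}{8}+\tfrac{b\Lambda^{2}S_{HL}^{2}}{8}$, contradicting \eqref{l1.12}. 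Therefore $\lambda<0$, $u\ne 0$, and $D:=\lim\|\nabla u_{n}\|_{2}^{2}\ge\|\nabla u\|_{2}^{2}>0$.

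Next I would pass to the limit in the Lagrange identity, using Proposition~\ref{pro1} for the $q$-term and the standard weak continuity in $L^{(2^{*})'}(\R^{N})$ of the Sobolev-critical Choquard nonlinearity for the $2_{\mu}^{*}$-term, to conclude that $u$ solves $-(a+bD)\Delta u=\lambda u+\alpha(I_{\mu}\ast|u|^{q})|u|^{q-2}u+(I_{\mu}\ast|u|^{2_{\mu}^{*}})|u|^{2_{\mu}^{*}-2}u$ together with its Pohozaev identity $Q_{\alpha}(u):=(a+bD)\|\nabla u\|_{2}^{2}-\delta_{q}\alpha\int_{\R^{N}}(I_{\mu}\ast|u|^{q})|u|^{q}-\int_{\R^{N}}(I_{\mu}\ast|u|^{2_{\mu}^{*}})|u|^{2_{\mu}^{*}}=0$. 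Writing $v_{n}=u_{n}-u$ and combining $P_{\alpha}(u_{n})\to 0$, $Q_{\alpha}(u)=0$, the Brezis--Lieb lemma, Proposition~\ref{ppro1} and the compactness $\int_{\R^{N}}(I_{\mu}\ast|u_{n}|^{q})|u_{n}|^{q}\to\int_{\R^{N}}(I_{\mu}\ast|u|^{q})|u|^{q}$ (valid since $q<2_{\mu}^{*}$), I obtain $(a+bD)\lim\|\nabla v_{n}\|_{2}^{2}=\lim\int_{\R^{N}}(I_{\mu}\ast|v_{n}|^{2_{\mu}^{*}})|v_{n}|^{2_{\mu}^{*}}=:m'$, with $t':=\lim\|\nabla v_{n}\|_{2}^{2}$ and, by Brezis--Lieb, $D=\|\nabla u\|_{2}^{2}+t'\ge t'$.

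Finally I would split on $m'$. If $m'>0$, then $at'+bt'^{2}\le(a+bD)t'=m'$ (since $t'\le D$) and $t'\ge S_{HL}(m')^{1/4}$, so exactly as before $(m')^{1/4}\ge\Lambda$, hence $t'\ge S_{HL}\Lambda$ and $D\ge S_{HL}\Lambda$; since the cross terms in $\lim J_{\alpha}(u_{n})$ reorganise, using $D=\|\nabla u\|_{2}^{2}+t'$ and $m'=(a+bD)t'$, into $l=\tilde{J}_{\alpha}(u)+\tfrac{t'}{8}(3a+bD)$, one gets $l-\bigl(\tfrac{b\Lambda^{2}S_{HL}^{2}}{8}+\tfrac{3a\Lambda S_{HL}}{8}\bigr)\ge\tilde{J}_{\alpha}(u)$, which is alternative~(i). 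If $m'=0$, then $t'=0$, i.e. $u_{n}\to u$ in $D^{1,2}(\R^{N})$, so $D=\|\nabla u\|_{2}^{2}$; subtracting the equation for $u$ from the Lagrange identity for $u_{n}$, testing with $u_{n}-u$, and using $u_{n}\to u$ in $L^{r}(\R^{N})$ for $2<r\le 2^{*}$ to annihilate the nonlinear terms leaves $-\lambda\int_{\R^{N}}|u_{n}-u|^{2}\to 0$; since $\lambda<0$, $u_{n}\to u$ in $H^{1}(\R^{N})$, so $u\in S_{c}$, $J_{\alpha}(u)=l$, and $u$ solves \eqref{1.1}, which is alternative~(ii). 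In both cases $u$ is radial as a (weak, resp. strong) limit of radial functions, and $\lambda<0$. The main obstacle is the sharp-threshold step: recognising that the chain $m=at+bt^{2}$, $t\ge S_{HL}m^{1/4}$ collapses exactly to the cubic $x^{3}-bS_{HL}^{2}x-aS_{HL}\ge 0$ whose Cardano root is $\Lambda$, and then carrying out the matching estimate in the $m'$-dichotomy — this is exactly what pins the result to $N=3$, $\theta=2$, $\mu=2$ and forces the discriminant hypothesis.
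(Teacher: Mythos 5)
Your proposal is correct and follows essentially the same route as the paper: boundedness and Lagrange multipliers as in Lemmas \ref{l1}--\ref{l2}, the sign of $\lambda$ via the sign analysis of the depressed cubic $x^{3}-bS_{HL}^{2}x-aS_{HL}$ whose Cardano root is $\Lambda$, and then the Brezis--Lieb/Proposition \ref{ppro1} splitting with the dichotomy $m'\geq\Lambda^{4}$ versus $m'=0$, exactly the adaptation of Lemma \ref{l2} that the paper invokes. Your explicit identity $l=\tilde{J}_{\alpha}(u)+\tfrac{t'}{8}(3a+bD)$ and your treatment of the $m=0$ subcase merely spell out details the paper leaves implicit, so no substantive difference.
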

\begin{proof}
Boundedness of $\{u_n\}$ and existence of Lagrange multipliers $\lambda_{n}\rightarrow \lambda\in\R$  follows from Lemma \ref{l1}. Next, we will prove that $\lambda\leq0$ and $u\neq 0$.
Taking into account  $P_{\alpha}(u_n)\rightarrow 0$, boundedness of $\{u_n\}$, \eqref{l1.4} and  \eqref{GN1}, one gets
\begin{align}\label{l1.6}
 \lambda_{n}c^{2}&=\alpha(\delta_{q}-1)\int_{\R^{N}}(I_{\mu}\ast|u_{n}|^{q})|u_{n}|^{q}+o_{n}(1)\\
\notag	&\leq \alpha(\delta_{q}-1)C_{q}\|\nabla u_{n}\|_{2}^{2q\delta_{q}}c^{2q(1-\delta_{q})}+o_{n}(1)\\
	\notag &\leq \alpha(\delta_{q}-1)c_{8}c^{2q(1-\delta_{q})}+o_{n}(1),
\end{align}
for some $c_{8}>0.$
Taking $n\rightarrow\infty$ and using the fact $\delta_{q}<1$, we obtain $\lambda c^{2}\leq 0$ which implies $\lambda\leq 0.$

\textbf{Claim:} $\boldsymbol{\lambda<0.}$

 Let, if possible on the contrary, $\lambda=0$ then by \eqref{l1.6}, we have 
\begin{equation}\label{l1.10}
\lim\limits_{n\rightarrow\infty}\int_{\R^{N}}(I_{\mu}\ast|u_{n}|^{q})|u_{n}|^{q}=0.
\end{equation}
By above and using the fact that $P_{\alpha}(u_n)\rightarrow 0$, we have
\begin{equation}\label{p.71}
\lim\limits_{n\rightarrow\infty}\left( a\|\nabla u_{n}\|^{2}_{2}+b\|\nabla u_{n}\|^{4}_{2}\right) =m,
\end{equation}
where $\lim\limits_{n\rightarrow\infty}\int_{\R^{N}}(I_{\mu}\ast|u_{n}|^{4})|u_{n}|^{4}=m.$
Therefore, by \eqref{a2} and \eqref{p.71}, we have
\begin{equation*}
 aS_{HL} m^{1/4}+bS_{HL}^{2}m^{1/2}\leq m.
\end{equation*}
Put $t=m^{1/4}$, then $aS_{HL} t+bS_{HL}^{2}t^{2}\leq t^{4}$ which implies
 either $t=0$ or  $t^{3}-bS_{HL}^{2}t-aS_{HL}\geq 0.$
 
 The equation $t^{3}-bS_{HL}^{2}t-aS_{HL}= 0$ is the depressed cubic equation, which has only one real root if the discriminant 
 $\frac{a^{2}S_{HL}^{2}}{4}-\frac{b^{3}S_{HL}^{6}}{27}>0$
 and it is equal to 
 $$t=m^{1/4}=\left(\frac{aS_{HL}}{2}+\sqrt{\frac{a^{2}S_{HL}^{2}}{4}-\frac{b^{3}S_{HL}^{6}}{27}}\right)^{\frac{1}{3}}+\left(\frac{aS_{HL}}{2}-\sqrt{\frac{a^{2}S_{HL}^{2}}{4}-\frac{b^{3}S_{HL}^{6}}{27}}\right)^{\frac{1}{3}}.$$
 The above equation implies that either $m=0$ or $m\geq \Lambda^{4}$, where
 $$\Lambda=\left(\frac{aS_{HL}}{2}+\sqrt{\frac{a^{2}S_{HL}^{2}}{4}-\frac{b^{3}S_{HL}^{6}}{27}}\right)^{\frac{1}{3}}+\left(\frac{aS_{HL}}{2}-\sqrt{\frac{a^{2}S_{HL}^{2}}{4}-\frac{b^{3}S_{HL}^{6}}{27}}\right)^{\frac{1}{3}}.$$
%
%
As $\{u_n\}$ is a Palais Smale sequence for the functional $J_{\alpha}$ of level $l$ and using \eqref{l1.10}, \eqref{p.71} and \eqref{a2}, we get
\begin{align*}
l=\lim\limits_{n\rightarrow\infty}J_{\alpha}(u_{n})&=\lim\limits_{n\rightarrow\infty}\left( \frac{a}{2}\|\nabla u_{n}\|^{2}_{2}+\frac{b}{4}\|\nabla u_{n}\|^{4}_{2} -\frac{1}{8}\int_{\R^{N}}(I_{\mu}\ast|u_{n}|^{4})|u_{n}|^{4}\right)\\
&\geq\lim\limits_{n\rightarrow\infty}\left( \frac{m}{8}+\frac{a}{4}\|\nabla u_{n}\|^{2}\right)\\
&\geq\left( \frac{m}{8}+\frac{a}{4}S_{HL}m^{1/4}\right)\\
&\geq\left( \frac{\Lambda^{4}}{8}+\frac{a\Lambda}{4}S_{HL}\right)\\
&\geq\left( \frac{b \Lambda^{2} S_{HL}^{2} }{8}+\frac{3a\Lambda S_{HL}}{8}\right)
\end{align*}
which is a contradiction to \eqref{l1.12}. It implies that our assumption is wrong.  Hence, $\lambda<0$ and $u\neq 0$. Further following the arguments as presented in the proof of Lemma \ref{l2} and above, we get the conclusion of the result.
\QED\end{proof} 


\begin{lemma}\label{l5.5}
Let $\frac{10}{3}<q<p=4$ and $\frac{a^{2}}{4}-\frac{b^{3}S_{HL}^{4}}{27}>0$. Then, we have 
$$m_{r}(c,\alpha)=\inf\limits_{u\in \mathfrak{P}_{c}\cap S_{c,r}}J_{\alpha}(u)<\left( \frac{b \Lambda^{2} S_{HL}^{2} }{8}+\frac{3a\Lambda S_{HL}}{8}\right),$$ where
$\Lambda$ is defined in \eqref{Lambda}.
\end{lemma}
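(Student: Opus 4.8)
The plan is to produce a single element of the Poho\v{z}aev manifold $\mathfrak{P}_{\alpha}$ (the set denoted $\mathfrak{P}_{c}$ in the statement) lying in $S_{c,r}$ whose energy is strictly below $T:=\frac{b\Lambda^{2}S_{HL}^{2}}{8}+\frac{3a\Lambda S_{HL}}{8}$. By Lemma~\ref{sl2} and Remark~\ref{r1}, every $w\in S_{c,r}$ has a unique fiber maximum point $t^{*}_{w}$, with $t^{*}_{w}\star w\in\mathfrak{P}_{\alpha}\cap S_{c,r}$ and $J_{\alpha}(t^{*}_{w}\star w)=\max_{s\in\R}J_{\alpha}(s\star w)$; hence $m_{r}(c,\alpha)\le\max_{s\in\R}J_{\alpha}(s\star w)$ for every such $w$, and it suffices to find a family $w_{\varepsilon}\in S_{c,r}$ with $\max_{s\in\R}J_{\alpha}(s\star w_{\varepsilon})<T$ for $\varepsilon$ small. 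I would first record that $T$ is exactly $\max_{\sigma>0}g(\sigma)$ for the limit profile $g(\sigma)=\frac{a}{2}\sigma+\frac{b}{4}\sigma^{2}-\frac{S_{HL}^{-4}}{8}\sigma^{4}$: one checks that $g$ has a unique positive critical point $\sigma^{*}=S_{HL}\Lambda$, where $\Lambda$ is the (Cardano) real root of $\Lambda^{3}-bS_{HL}^{2}\Lambda-aS_{HL}=0$ produced in Lemma~\ref{l5.4} under the hypothesis $\frac{a^{2}}{4}-\frac{b^{3}S_{HL}^{4}}{27}>0$, and substituting and using $\Lambda^{3}=bS_{HL}^{2}\Lambda+aS_{HL}$ gives $g(\sigma^{*})=T$.

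For the test functions I take an extremal $U$ for $S_{HL}$ in \eqref{a2} (positive, radial, with $U(x)\sim|x|^{-1}$ as $|x|\to\infty$ since $N=3$), a fixed radial cutoff $\eta$ with $\eta\equiv1$ on $B_{1}$ and $\operatorname{supp}\eta\subset B_{2}$, set $U_{\varepsilon}(x)=\varepsilon^{-1/2}U(x/\varepsilon)$, and put $w_{\varepsilon}=c\,\eta U_{\varepsilon}\,\|\eta U_{\varepsilon}\|_{2}^{-1}\in S_{c,r}$. The standard truncated-bubble estimates in dimension three give $\|\eta U_{\varepsilon}\|_{2}^{2}=K_{0}\varepsilon+o(\varepsilon)$ with $K_{0}>0$, $\|\nabla(\eta U_{\varepsilon})\|_{2}^{2}=\|\nabla U\|_{2}^{2}+O(\varepsilon)$, and $\int_{\R^{N}}(I_{\mu}\ast|\eta U_{\varepsilon}|^{4})|\eta U_{\varepsilon}|^{4}=\int_{\R^{N}}(I_{\mu}\ast|U|^{4})|U|^{4}+O(\varepsilon)$; together with the Hardy--Littlewood--Sobolev inequality (so that $\int(I_{\mu}\ast|v|^{4})|v|^{4}\le S_{HL}^{-4}\|\nabla v\|_{2}^{8}$ for all $v\in D^{1,2}(\R^{N})$), this yields, writing $R_{\varepsilon}:=\|\nabla(\eta U_{\varepsilon})\|_{2}^{-8}\int(I_{\mu}\ast|\eta U_{\varepsilon}|^{4})|\eta U_{\varepsilon}|^{4}$, the two-sided bound $0\le S_{HL}^{-4}-R_{\varepsilon}=O(\varepsilon)$. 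Since $q>\frac{10}{3}>3$, $U$ belongs to $L^{q}$ near infinity, so a rescaling and dominated convergence give $\int_{\R^{N}}(I_{\mu}\ast|\eta U_{\varepsilon}|^{q})|\eta U_{\varepsilon}|^{q}=\bar K\,\varepsilon^{4-q}(1+o(1))$ with $\bar K=\int_{\R^{N}}(I_{\mu}\ast|U|^{q})|U|^{q}\in(0,\infty)$.

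Next I would compute the fiber. From the expression for $J_{\alpha}(s\star w_{\varepsilon})$, using $\delta_{2_{\mu}^{*}}=1$, $q\delta_{q}=\frac{3q-4}{2}$, $q(1-\delta_{q})=\frac{4-q}{2}$ and the substitution $\sigma=e^{2s}\|\nabla w_{\varepsilon}\|_{2}^{2}$, one gets $J_{\alpha}(s\star w_{\varepsilon})=\widetilde f_{\varepsilon}(\sigma):=\frac{a}{2}\sigma+\frac{b}{4}\sigma^{2}-\frac{R_{\varepsilon}}{8}\sigma^{4}-d_{\varepsilon}\,\sigma^{q\delta_{q}}$, the normalization constant cancelling in the quartic term, and $d_{\varepsilon}=\frac{\alpha}{2q}\,c^{2q(1-\delta_{q})}\,\|\eta U_{\varepsilon}\|_{2}^{-2q(1-\delta_{q})}\,\|\nabla(\eta U_{\varepsilon})\|_{2}^{-2q\delta_{q}}\int(I_{\mu}\ast|\eta U_{\varepsilon}|^{q})|\eta U_{\varepsilon}|^{q}\asymp\varepsilon^{(4-q)-(4-q)/2}=\varepsilon^{(4-q)/2}>0$. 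Since $R_{\varepsilon}\to S_{HL}^{-4}>0$ and $d_{\varepsilon}\to0$, $\widetilde f_{\varepsilon}\to g$ uniformly on compact subsets of $[0,\infty)$ and $\widetilde f_{\varepsilon}$ is coercive towards $-\infty$ uniformly in small $\varepsilon$; since $\sup_{\sigma>0}\widetilde f_{\varepsilon}\to\sup_{\sigma>0}g=T>0$, the (unique) maximizer $\sigma_{\varepsilon}$ of $\widetilde f_{\varepsilon}$ lies in a fixed compact interval $[\sigma_{1},M]\subset(0,\infty)$ for $\varepsilon$ small. Therefore $\max_{s\in\R}J_{\alpha}(s\star w_{\varepsilon})=\widetilde f_{\varepsilon}(\sigma_{\varepsilon})=g(\sigma_{\varepsilon})+\frac{1}{8}(S_{HL}^{-4}-R_{\varepsilon})\sigma_{\varepsilon}^{4}-d_{\varepsilon}\sigma_{\varepsilon}^{q\delta_{q}}\le T+\frac{M^{4}}{8}(S_{HL}^{-4}-R_{\varepsilon})-d_{\varepsilon}\sigma_{1}^{q\delta_{q}}\le T+C\varepsilon-c_{*}\varepsilon^{(4-q)/2}$ for suitable $C,c_{*}>0$; since $\frac{4-q}{2}<1$, the right-hand side is $<T$ once $\varepsilon$ is small, which gives $m_{r}(c,\alpha)<T$.

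The main obstacle is precisely this last balance: one must verify that in dimension $N=3$ the ``gain'' $d_{\varepsilon}\asymp\varepsilon^{(4-q)/2}$ coming from the subcritical Choquard term strictly dominates the ``loss'' $S_{HL}^{-4}-R_{\varepsilon}=O(\varepsilon)$ produced by truncating the non-$L^{2}$ extremal --- the nonlocal counterpart of the low-dimensional subtlety in the Brezis--Nirenberg problem. This is exactly where the hypothesis $q>\frac{10}{3}$ is used (in particular $q>3$, which makes the leading asymptotic of the $q$-term clean, with $\bar K$ finite and positive, and with exponent $(4-q)/2<1$ beating the $O(\varepsilon)$ error). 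Carrying out the precise truncated-bubble expansions for the nonlocal $2_{\mu}^{*}$-term and for the $q$-term, rather than the reduction, is the technical heart of the argument.
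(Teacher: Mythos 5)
Your proposal is correct and follows the same overall strategy as the paper: both take truncated Hardy--Littlewood--Sobolev extremals normalized to $S_{c,r}$, reduce to bounding $\max_{s}J_{\alpha}(s\star w_{\varepsilon})$ via Lemma \ref{sl2}, identify the threshold $T=\frac{b\Lambda^{2}S_{HL}^{2}}{8}+\frac{3a\Lambda S_{HL}}{8}$ as the max of the limiting profile through the Cardano root $\Lambda$ of $\Lambda^{3}-bS_{HL}^{2}\Lambda-aS_{HL}=0$, and win by balancing the gain $\asymp\varepsilon^{(4-q)/2}$ from the subcritical Choquard term against the truncation loss in the critical term. Where you genuinely diverge is in the bookkeeping of the fiber maximum: the paper explicitly computes the maximizer $e^{2t_{0}^{*}}$ of the unperturbed fiber by Cardano's formula, propagates the bubble errors through the cube roots (recording them as $O(\varepsilon^{1/3})$), and separately bounds $e^{t_{\mu}^{*}}$ from below out of the Poho\v{z}aev identity; you instead substitute $\sigma=e^{2s}\|\nabla w_{\varepsilon}\|_{2}^{2}$, observe that the critical coefficient $R_{\varepsilon}$ satisfies $0\le S_{HL}^{-4}-R_{\varepsilon}=O(\varepsilon)$ by the sharp HLS inequality, localize the maximizer in a fixed compact interval $[\sigma_{1},M]$, and compare pointwise with the limit profile $g$. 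Your version is more elementary (no explicit formula for the perturbed or unperturbed maximizer is needed), and it yields a loss of order $O(\varepsilon)$ rather than the paper's $O(\varepsilon^{1/3})$, so the balance $\varepsilon^{(4-q)/2}\ll$ loss only requires $(4-q)/2<1$ rather than $(4-q)/2<1/3$; this clarifies that the restriction $q>\frac{10}{3}$ in the lemma is tied to the paper's conservative cube-root error propagation rather than to an intrinsic obstruction (your aside that $q>3$ is needed for finiteness of $\bar K$ is slightly off --- HLS only needs $|U|^{q}\in L^{3/2}$, i.e.\ $q>2$ --- but this is harmless since $q>\frac{10}{3}$ anyway). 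Both write-ups lean on the same ``standard'' truncated-bubble asymptotics for $\|\eta U_{\varepsilon}\|_{2}^{2}$, the gradient, and the two nonlocal terms without proving them (the paper cites them, you flag them as the technical heart), so neither is more complete on that front.
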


\begin{proof}
	From \cite[Lemma 1.2]{gao2018brezis} we know that
	\begin{align*}
		U_\varepsilon(x)= S^{\frac{(N-\mu)(2-N)}{4(N-\mu+2)}}(C(N,\mu))^{\frac{2-N}{2(N-\mu+2)}}\left(\frac{\varepsilon}{\varepsilon^2+|x|^2}\right)^{\frac{N-2}{2}}, \; 0<\varepsilon<1
	\end{align*}
	are the minimizers of $S_{HL}$ in \eqref{a2}. 
	
	Now  define $\eta\in C_c^{\infty}(\mathbb{R}^N)$ such that $0\leq \eta\leq 1$ in $\mathbb{R}^N$, $\eta\equiv1$ in $B_\de(0)$ and $\eta\equiv 0 $ in $\mathbb{R}^N \setminus B_{2\de}(0)$ and $|\na \eta |< C$. Let $u_\varepsilon\in  H^1(\Om)$ be defined as  $u_\varepsilon(x)= \eta(x) U_\varepsilon(x)$ and $v_\varepsilon(x)=c\frac{u_\varepsilon(x)}{\|u_\varepsilon(x)\|_{2}}$. 
By Lemma \ref{sl2}, there exists a unique $t^{*}_{\mu}\in \R$ such that 
$$m_{r}(c,\mu)\leq J_{\alpha}(t^{*}_{\mu}\star v_{\varepsilon})=\max_{t\in \R}J_{\alpha}(t v_{\varepsilon})=\max_{t\in \R}E_{v_{\varepsilon}}(t).$$ 
We have the following estimates (see \cite{goel2019kirchhoff}), 
$\|\nabla u_{\varepsilon}\|^{2}_{2}=S_{HL}^{4/3}+O(\varepsilon)$, $\int_{\R^{N}}(I_{\mu}\ast|u_{\varepsilon}|^{4})|u_{\varepsilon}|^{4}\leq S_{HL}^{4/3}+O(\varepsilon^{3})$,  $\int_{\R^{N}}(I_{\mu}\ast|u_{\varepsilon}|^{4})|u_{\varepsilon}|^{4}\geq S_{HL}^{4/3}-O(\varepsilon^{3})$, $\int_{\R^{N}}(I_{\mu}\ast|u_{\varepsilon}|^{q})|u_{\varepsilon}|^{q}=O(\varepsilon^{4-q})$ and $\| u_{\varepsilon}\|^{2}_{2}=O(\varepsilon)$.

Define,
$$E_{v_{\varepsilon}}^{0}(t)=J_{0}(t\star v_{\varepsilon})=\frac{a}{2}e^{2t}\|\nabla v_{\varepsilon}\|^{2}_{2}+\frac{b}{4}e^{4 t}\|\nabla v_{\varepsilon}\|^{4}_{2}-\frac{1}{8}e^{8t}\int_{\R^{N}}(I_{\mu}\ast|v_{\varepsilon}|^{4})|v_{\varepsilon}|^{4}.$$
Consider the function $f(s)=c_{1}t+\frac{c_{2}}{2}s^{2}-\frac{c_{3}}{4}s^{4}$, where $c_{1}=\frac{a}{2}\|\nabla v_{\varepsilon}\|^{2}_{2}$, $c_{2}=\frac{b}{2}\|\nabla v_{\varepsilon}\|^{4}_{2}$ and $c_{3}=\frac{1}{2}\int_{\R^{N}}(I_{\mu}\ast|v_{\varepsilon}|^{4})|v_{\varepsilon}|^{4}$. Then, $f'(s)=c_{1}+c_{2}s-c_{3}s^{3}$, maximum point of $f$ are the zeros of $f'$. The equation $f'$ is a depressed cubic equation that has only one real root if the discriminant 
$\frac{c_{1}^{2}}{4c_{3}^{2}}-\frac{c_{2}^{3}}{27c_{3}^{3}}>0$
and it is equal to 
 $$s_{0}^{*}=\left(\frac{c_{1}}{2c_{3}}+\sqrt{\frac{c_{1}^{2}}{4c_{3}^{2}}-\frac{c_{2}^{3}}{27c_{3}^{3}}}\right)^{\frac{1}{3}}+\left(\frac{c_{1}}{2c_{3}}-\sqrt{\frac{c_{1}^{2}}{4c_{3}^{2}}-\frac{c_{2}^{3}}{27c_{3}^{3}}}\right)^{\frac{1}{3}}.$$ 
Thus, the unique maximum point of $E_{v_{\varepsilon}}^{0}$ is
\begin{align*}
e^{2t_{0}^{*}}=&\left(\frac{a\|\nabla v_{\varepsilon}\|^{2}_{2}}{2\int_{\R^{N}}(I_{\mu}\ast|v_{\varepsilon}|^{4})|v_{\varepsilon}|^{4}}+\sqrt{\frac{a^{2}\|\nabla v_{\varepsilon}\|^{4}_{2}}{4(\int_{\R^{N}}(I_{\mu}\ast|v_{\varepsilon}|^{4})|v_{\varepsilon}|^{4})^{2}}-\frac{b^{3}\|\nabla v_{\varepsilon}\|_{2}^{12}}{27(\int_{\R^{N}}(I_{\mu}\ast|v_{\varepsilon}|^{4})|v_{\varepsilon}|^{4})^{3}}}\right)^{\frac{1}{3}}\\
&+\left(\frac{a\|\nabla v_{\varepsilon}\|^{2}_{2}}{2\int_{\R^{N}}(I_{\mu}\ast|v_{\varepsilon}|^{4})|v_{\varepsilon}|^{4}}-\sqrt{\frac{a^{2}\|\nabla v_{\varepsilon}\|^{4}_{2}}{4(\int_{\R^{N}}(I_{\mu}\ast|v_{\varepsilon}|^{4})|v_{\varepsilon}|^{4})^{2}}-\frac{b^{3}\|\nabla v_{\varepsilon}\|_{2}^{12}}{27(\int_{\R^{N}}(I_{\mu}\ast|v_{\varepsilon}|^{4})|v_{\varepsilon}|^{4})^{3}}}\right)^{\frac{1}{3}}.
\end{align*}
This subsequently gives us
\begin{align*}
&\frac{e^{2t_{0}^{*}}c^{2}}{\|u_{\varepsilon}\|^{2}_{2}}=\left(\frac{a\|\nabla u_{\varepsilon}\|^{2}_{2}}{2\int_{\R^{N}}(I_{\mu}\ast|u_{\varepsilon}|^{4})|u_{\varepsilon}|^{4}}+\sqrt{\frac{a^{2}\|\nabla u_{\varepsilon}\|^{4}_{2}}{4(\int_{\R^{N}}(I_{\mu}\ast|u_{\varepsilon}|^{4})|u_{\varepsilon}|^{4})^{2}}-\frac{b^{3}\|\nabla u_{\varepsilon}\|_{2}^{12}}{27(\int_{\R^{N}}(I_{\mu}\ast|u_{\varepsilon}|^{4})|u_{\varepsilon}|^{4})^{3}}}\right)^{\frac{1}{3}}\\
&+\left(\frac{a\|\nabla u_{\varepsilon}\|^{2}_{2}}{2\int_{\R^{N}}(I_{\mu}\ast|u_{\varepsilon}|^{4})|u_{\varepsilon}|^{4}}-\sqrt{\frac{a^{2}\|\nabla u_{\varepsilon}\|^{4}_{2}}{4(\int_{\R^{N}}(I_{\mu}\ast|u_{\varepsilon}|^{4})|u_{\varepsilon}|^{4})^{2}}-\frac{b^{3}\|\nabla u_{\varepsilon}\|_{2}^{12}}{27(\int_{\R^{N}}(I_{\mu}\ast|u_{\varepsilon}|^{4})|u_{\varepsilon}|^{4})^{3}}}\right)^{\frac{1}{3}}.
\end{align*}

By using the estimates, we have
\begin{align*}
&\left(\frac{a\|\nabla u_{\varepsilon}\|^{2}_{2}}{2\int_{\R^{N}}(I_{\mu}\ast|u_{\varepsilon}|^{4})|u_{\varepsilon}|^{4}}+\sqrt{\frac{a^{2}\|\nabla u_{\varepsilon}\|^{4}_{2}}{4(\int_{\R^{N}}(I_{\mu}\ast|u_{\varepsilon}|^{4})|u_{\varepsilon}|^{4})^{2}}-\frac{b^{3}\|\nabla u_{\varepsilon}\|_{2}^{12}}{27(\int_{\R^{N}}(I_{\mu}\ast|u_{\varepsilon}|^{4})|u_{\varepsilon}|^{4})^{3}}}\right)^{\frac{1}{3}}\\
&\leq \left(\frac{a(S_{HL}^{4/3}+O(\varepsilon))}{2(S_{HL}^{4/3}-O(\varepsilon^{3}))}+\sqrt{\frac{a^{2}(S_{HL}^{4/3}+O(\varepsilon))^{2}}{4(S_{HL}^{4/3}-O(\varepsilon^{3}))^{2}}-\frac{b^{3}(S_{HL}^{4/3}+O(\varepsilon))^{6}}{27(S_{HL}^{4/3}+O(\varepsilon^{3}))^{3}}}\right)^{\frac{1}{3}}\\
&=\left(\frac{a(1+O(\varepsilon))}{2(1-O(\varepsilon^{3}))}+\sqrt{\frac{a^{2}(1+O(\varepsilon))^{2}}{4(1-O(\varepsilon^{3}))^{2}}-S_{HL}^{4}\frac{b^{3}(1+O(\varepsilon))^{6}}{27(1+O(\varepsilon^{3}))^{3}}}\right)^{\frac{1}{3}}\\
&=\left(\frac{a}{2}+\sqrt{\frac{a^{2}}{4}-S_{HL}^{4}\frac{b^{3}}{27}}\right)^{\frac{1}{3}}+O(\varepsilon^\frac{1}{3}).
\end{align*}
Consequently, we have
\begin{equation}\label{e1}
\frac{e^{2t_{0}^{*}}c^{2}}{\|u_{\varepsilon}\|^{2}_{2}}\leq \frac{\Lambda}{S_{HL}^{1/3}}+O(\varepsilon^\frac{1}{3}),
\end{equation}
where
$$\frac{\Lambda}{S_{HL}^{1/3}}=\left(\frac{a}{2}+\sqrt{\frac{a^{2}}{4}-S_{HL}^{4}\frac{b^{3}}{27}}\right)^{\frac{1}{3}}+\left(\frac{a}{2}-\sqrt{\frac{a^{2}}{4}-S_{HL}^{4}\frac{b^{3}}{27}}\right)^{\frac{1}{3}}.$$
Using the similar assertions as above, we get
\begin{equation}\label{e2}
\frac{e^{2t_{0}^{*}}c^{2}}{\|u_{\varepsilon}\|^{2}_{2}}\geq \frac{\Lambda}{S_{HL}^{1/3}}.
\end{equation}
By \eqref{e1} and \eqref{e2}, we have
\begin{align*}\sup_{t\in \R} E_{v_{\varepsilon}}^{0}(t)&=E_{v_{\varepsilon}}^{0}(t_{0}^{*})=\frac{a}{2}e^{2t_{0}^{*}}\|\nabla v_{\varepsilon}\|^{2}_{2}+\frac{b}{4}e^{4 t_{0}^{*}}\|\nabla v_{\varepsilon}\|^{4}_{2}-\frac{1}{8}e^{8t_{0}^{*}}\int_{\R^{N}}(I_{\mu}\ast|v_{\varepsilon}|^{4})|v_{\varepsilon}|^{4}\\
& =\frac{a}{2}\frac{e^{2t_{0}^{*}}c^{2}\|\nabla u_{\varepsilon}\|^{2}_{2}}{\|u_{\varepsilon}\|^{2}_{2}}+\frac{b}{4}\frac{e^{4t_{0}^{*}}c^{4}\|\nabla u_{\varepsilon}\|^{4}_{2}}{\|u_{\varepsilon}\|^{4}_{2}}-\frac{1}{8}\frac{e^{8t_{0}^{*}}c^{8}}{\|u_{\varepsilon}\|^{8}_{2}}\int_{\R^{N}}(I_{\mu}\ast|u_{\varepsilon}|^{4})|u_{\varepsilon}|^{4} \\
& \leq \frac{a}{2}\left( \frac{\Lambda}{S_{HL}^{1/3}}+O(\varepsilon^\frac{1}{3})\right)(S_{HL}^{4/3}+O(\varepsilon))+\frac{b}{4}\left( \frac{\Lambda}{S_{HL}^{1/3}}+O(\varepsilon^\frac{1}{3})\right)^{2}(S_{HL}^{4/3}+O(\varepsilon))^{2}\\
&-\frac{1}{8}\left( \frac{\Lambda}{S_{HL}^{1/3}}\right)^{4}(S_{HL}^{4/3}-O(\varepsilon^{3})) \\
&=\frac{a\Lambda S_{HL}}{2}+\frac{b\Lambda^{2} S_{HL}^{2}}{4}-\frac{\Lambda^{4}}{8}+O(\varepsilon^\frac{1}{3})\\
&=\frac{a\Lambda S_{HL}}{2}+\frac{b\Lambda^{2} S_{HL}^{2}}{4}-\frac{a\Lambda S_{HL}}{8}-\frac{b\Lambda^{2} S_{HL}^{2}}{8}+O(\varepsilon^\frac{1}{3})\\
&=\frac{3a\Lambda S_{HL}}{8}+\frac{b\Lambda^{2} S_{HL}^{2}}{8}+O(\varepsilon^\frac{1}{3}).
\end{align*}
On the other hand, we have $(E_{v_{\varepsilon}})'(t^{*}_{\mu})=P_{\alpha}(t^{*}_{\mu}\star v_{\varepsilon})=0$, since $J_{\alpha}(t^{*}_{\mu}\star v_{\varepsilon})=\max\limits_{t\in \R}E_{v_{\varepsilon}}(t)$ implies that $e^{2t_{\mu}^{*}}\leq e^{2t_{0}^{*}}$, therefore, by \eqref{e1}, we get
\begin{equation*}
e^{2t_{\mu}^{*}}\leq \frac{\|u_{\varepsilon}\|^{2}_{2}}{c^{2}}\left( \frac{\Lambda}{S_{HL}^{1/3}}+O(\varepsilon^\frac{1}{3})\right) 
\end{equation*}
and 
$$ae^{2t_{\mu}^{*}}\|\nabla v_{\varepsilon}\|^{2}_{2}+be^{4t_{\mu}^{*}}\|\nabla v_{\varepsilon}\|^{4}_{2}-\mu\delta_{q}e^{2\delta_{q}qt_{\mu}^{*}}\int_{\R^{N}}(I_{\mu}\ast|v_{\varepsilon}|^{q})|v_{\varepsilon}|^{q}=e^{8t_{\mu}^{*}}\int_{\R^{N}}(I_{\mu}\ast|v_{\varepsilon}|^{4})|v_{\varepsilon}|^{4}.$$
Therefore, we have
\begin{align*}e^{4t_{\mu}^{*}}&\geq \dfrac{b\|\nabla v_{\varepsilon}\|^{4}_{2}}{\int_{\R^{N}}(I_{\mu}\ast|v_{\varepsilon}|^{4})|v_{\varepsilon}|^{4}}-\dfrac{\mu\delta_{q}e^{(2\delta_{q}q-4)t_{\mu}^{*}}\int_{\R^{N}}(I_{\mu}\ast|v_{\varepsilon}|^{q})|v_{\varepsilon}|^{q}}{\int_{\R^{N}}(I_{\mu}\ast|v_{\varepsilon}|^{4})|v_{\varepsilon}|^{4}}\\
& \geq \dfrac{b\|\nabla u_{\varepsilon}\|^{4}_{2}\|u_{\varepsilon}\|^{4}_{2}}{c^{4}\int_{\R^{N}}(I_{\mu}\ast|u_{\varepsilon}|^{4})|u_{\varepsilon}|^{4}}-\dfrac{\mu\delta_{q}e^{(2\delta_{q}q-4)t_{\mu}^{*}}\int_{\R^{N}}(I_{\mu}\ast|u_{\varepsilon}|^{q})|u_{\varepsilon}|^{q}\|u_{\varepsilon}\|^{8-2q}_{2}}{{c^{^{8-2q}}\int_{\R^{N}}(I_{\mu}\ast|u_{\varepsilon}|^{4})|u_{\varepsilon}|^{4}}}\\
&\geq \dfrac{b\|\nabla u_{\varepsilon}\|^{4}_{2}\|u_{\varepsilon}\|^{4}_{2}}{c^{4}\int_{\R^{N}}(I_{\mu}\ast|u_{\varepsilon}|^{4})|u_{\varepsilon}|^{4}}-\dfrac{\mu\delta_{q}\int_{\R^{N}}(I_{\mu}\ast|u_{\varepsilon}|^{q})|u_{\varepsilon}|^{q}\|u_{\varepsilon}\|^{4-2q(1-\delta_{q})}_{2}}{{c^{4-2q(1-\delta_{q})}\int_{\R^{N}}(I_{\mu}\ast|u_{\varepsilon}|^{4})|u_{\varepsilon}|^{4}}}\left( \frac{\Lambda}{S_{HL}^{1/3}}+O(\varepsilon^\frac{1}{3}) \right)^{\delta_{q}q-2}\\
&\geq \dfrac{\|u_{\varepsilon}\|^{4}_{2}}{c^{4}}\left[ \dfrac{b\|\nabla u_{\varepsilon}\|^{4}_{2}}{\int_{\R^{N}}(I_{\mu}\ast|u_{\varepsilon}|^{4})|u_{\varepsilon}|^{4}}-\dfrac{\mu\delta_{q}c^{2q(1-\delta_{q})}\int_{\R^{N}}(I_{\mu}\ast|u_{\varepsilon}|^{q})|u_{\varepsilon}|^{q}}{\|u_{\varepsilon}\|^{2q(1-\delta_{q})}_{2}\int_{\R^{N}}(I_{\mu}\ast|u_{\varepsilon}|^{4})|u_{\varepsilon}|^{4}}\left( \frac{\Lambda}{S_{HL}^{1/3}}+O(\varepsilon^\frac{1}{3}) \right)^{\delta_{q}q-2}\right]\\
&\geq \dfrac{\|u_{\varepsilon}\|^{4}_{2}}{c^{4}}\left[ \dfrac{b(S_{HL}^{4/3}+O(\varepsilon))^{2}}{(S_{HL}^{4/3}+O(\varepsilon^{3}))}-\dfrac{\mu\delta_{q}c^{2q(1-\delta_{q})}(O(\varepsilon))^{4-q}}{O(\varepsilon)^{q(1-\delta_{q})}(S_{HL}^{4/3}-O(\varepsilon^{3}))}\left( \frac{\Lambda}{S_{HL}^{1/3}}+O(\varepsilon^\frac{1}{3}) \right)^{\delta_{q}q-2}\right]\\ 
&\geq \dfrac{\|u_{\varepsilon}\|^{4}_{2}}{c^{4}}\left[ bS_{HL}^{4/3}-\dfrac{\mu\delta_{q}c^{2q(1-\delta_{q})}(O(\varepsilon))^{4-q}}{S_{HL}^{4/3}O(\varepsilon)^{\frac{4-q}{2}}}\left( \frac{\Lambda}{S_{HL}^{1/3}}+O(\varepsilon^\frac{1}{3}) \right)^{\delta_{q}q-2}\right].
\end{align*}

Thus, we have $e^{t_{\mu}^{*}}\geq c_8\dfrac{\|u_{\varepsilon}\|_{2}}{c}$, for some $c_8>0$ and for sufficiently small $\varepsilon>0$. 
Finally, we calculate
\begin{align*}
\max_{t\in \R}E_{v_{\varepsilon}}(t)=E_{v_{\varepsilon}}(t_{\mu}^{*})&=E_{v_{\varepsilon}}^{0}(t_{0}^{*})-\mu\delta_{q}e^{2\delta_{q}qt_{\mu}^{*}}\int_{\R^{N}}(I_{\mu}\ast|v_{\varepsilon}|^{q})|v_{\varepsilon}|^{q}\\
& \leq \frac{3a\Lambda S_{HL}}{8}+\frac{b\Lambda^{2} S_{HL}^{2}}{8}+O(\varepsilon^\frac{1}{3})-\mu\delta_{q}C^{2q\delta_{q}}\dfrac{\|u_{\varepsilon}\|_{2}^{2q\delta_{q}}}{c^{2q\delta_{q}}}\int_{\R^{N}}(I_{\mu}\ast|v_{\varepsilon}|^{q})|v_{\varepsilon}|^{q}\\
& \leq \frac{3a\Lambda S_{HL}}{8}+\frac{b\Lambda^{2} S_{HL}^{2}}{8}+O(\varepsilon^\frac{1}{3})-\dfrac{\mu\delta_{q}C^{2q\delta_{q}}c^{2q(1-\delta_{q})}}{\|u_{\varepsilon}\|_{2}^{2q(1-\delta_{q})}}\int_{\R^{N}}(I_{\mu}\ast|u_{\varepsilon}|^{q})|u_{\varepsilon}|^{q}\\
& \leq \frac{3a\Lambda S_{HL}}{8}+\frac{b\Lambda^{2} S_{HL}^{2}}{8}+O(\varepsilon^\frac{1}{3})-\dfrac{\mu\delta_{q}C^{2q\delta_{q}}c^{2q(1-\delta_{q})}(O(\varepsilon))^{4-q}}{(O(\varepsilon))^{\frac{4-q}{2}}}\\
& \leq \frac{3a\Lambda S_{HL}}{8}+\frac{b\Lambda^{2} S_{HL}^{2}}{8}+O(\varepsilon^\frac{1}{3})-\mu\delta_{q}C^{2q\delta_{q}}c^{2q(1-\delta_{q})}(O(\varepsilon))^{\frac{4-q}{2}}\\
& \leq \frac{3a\Lambda S_{HL}}{8}+\frac{b\Lambda^{2} S_{HL}^{2}}{8}, 
\end{align*}
which completes the  proof.
\QED\end{proof}


We are now ready to prove Theorem \ref{t3} with the assistance of Lemmas \ref{l1} \ref{sl1}, \ref{sl2} and \ref{sl3}.

 \textbf{Proof of the Theorem \ref{t3}
 $(1)$: } Set,
$$\Gamma=\left\lbrace \gamma(t)=\left(\kappa(t),\xi(t)\right)\in C([0,1],\R\times S_{c,r}) ;\gamma(0)\in(0,\overline{\Upsilon_{r}}),\gamma(1)\in (0,J_{\alpha}^{0})\right\rbrace\cdot$$
then using the standard arguments as used in \cite[Theorem 1.6]{soave2020normalized1}, $J_{\alpha}|_{S_{c}}$ has a critical point via mountain pass theorem, say, $u_{c,\alpha,m}$ such that $J_{\alpha}(u_{c,\alpha,m})=\sigma(c,\alpha)>0$. Also,  $u_{c,\alpha,m}$ is a positive radial solution to \eqref{1.1} for some $\lambda_{c,\alpha,m}<0$. Moreover, $u_{c,\alpha,m}\in S_{c}$ is a ground state solution for $J_{\alpha}|_{S_{c}}$. 

\textbf{Proof of the Theorem \ref{t3}
$(2)$: } The proof is similar to the proof of the Theorem \ref{t1} $(4)$.    
\QED

\textbf{Proof of the Theorem \ref{t4}: }
Proof of the Theorem \ref{t4}] By Lemma \ref{l5.5}, we have $m_{r}(c,\alpha)<\left( \frac{b \Lambda^{2} S_{HL}^{2} }{8}+\frac{3a\Lambda S_{HL}}{8}\right).$ The proof is similar to the proof of the theorem \ref{t3} but replace Lemma \ref{l1} by lemma \ref{l5.4}.  
\bibliographystyle{siam}

\bibliography{siam}
\end{document}